\documentclass[11pt,reqno]{amsart}

\usepackage[english]{babel}
\usepackage{enumitem}
\usepackage[letterpaper,top=2cm,bottom=2cm,left=3cm,right=3cm,marginparwidth=1.75cm]{geometry}

\usepackage{amsmath}
\usepackage{amssymb}
\usepackage{graphicx}
\usepackage{tikz}
\usepackage{comment}

\usepackage[colorlinks=true, allcolors=blue]{hyperref}
\theoremstyle{definition}
\newtheorem{theorem}{Theorem}[section]
\newtheorem{corollary}[theorem]{Corollary}
\newtheorem{lemma}[theorem]{Lemma}
\newtheorem{proposition}[theorem]{Proposition}
\newtheorem{defin}[theorem]{Definition}

\title{A Rademacher exact type formula for $\text{pod}_2(n)$}
\author{Kilian Rausch}
\address{Department of Mathematics and Computer Science, Division of Mathematics, University of Cologne, Weyertal 86-90, 50931 Cologne, Germany}
\email{krausch1@uni-koeln.de}
\begin{document}

\subjclass[2020]{11B57, 11F03, 11F20, 11F30, 11F37, 11L05,   11P82}
\keywords{Circle Method, exact formulas, Kloosterman sums, mock modular forms, mock theta functions, partitions}

\begin{abstract}
In this paper, we calculate an exact formula for the number of partitions of a natural number $n$, where the largest part is even and no odd parts appears more than two times. The generating functions of the number of these partitions is a mixed mock modular form of weight 0. In order to obtain the formula we apply an extended version of the circle method, during which we need to bound Kloosterman sums and similar exponential sums as well as Mordell-type integrals.
\end{abstract}

\maketitle
\section{Introduction and statement of results}
For  $n\in \mathbb{N}_0,$ a \textit{partition} $\lambda=(\lambda_1,...,\lambda_s)$ of $n$ is a weakly decreasing sequence of natural numbers  with $\sum_{j=1}^s \lambda_j=n$. In this paper, we are interested in a subset of all partitions, namely, the partitions where the largest number $\lambda_1$ is even and any odd number  appears only up to two times. Denote by $\text{pod}_2(n)$ the number of partitions of $n$ with these properties. By convention, we set $\text{pod}_2(0):=1$ and find
\begin{align*}
    \text{pod}_2(2)= 1, \hspace{0.5cm }  \text{pod}_2(3)=  1,\hspace{0.5cm }  \text{pod}_2(4)=3,  \hspace{0.5cm } \text{pod}_2(5)= 2  \hspace{0.5cm} \text{and} \hspace{0.5cm} \text{pod}_2(6)= 5.
\end{align*}
Banerjee and Bringmann \cite{banbring} showed that the generating function of the number of these partitions  
\begin{align*}
    \text{POD}_2(q):= \sum_{n=0}^\infty \text{pod}_2(n)q^n,
\end{align*}
is closely related to Ramanujan's \textit{third order mock theta function}
\cite[(2.1.7)]{RamanujanV}
\begin{align*}
    \rho(q):= \sum_{n =0}^\infty \frac{q^{2n(n+1)}}{\prod_{j=0}^n\left (1+q^{2j+1}+q^{4j+2}\right)}
\end{align*}
via
\begin{align*}\label{PODrho}
    \text{POD}_2(q) = \frac{\left(q^3;q^3\right)_\infty}{\left(q^6;q^6\right)_\infty(q;q)_\infty} \rho(q). \tag{1.1}
\end{align*}
Here and throughout for $a \in \mathbb{C}$ and $n \in \mathbb{N}_0 \cup \{\infty\}$ we denote the $q$\textit{-Pochhammer symbol}  by $(a;q)_n:=\prod_{j=0}^{n-1} \left(1-aq^j \right).$

 A natural question arising in the context of generating functions is finding an exact type formula for the Fourier coefficients of a given series. A famous example for such a formula is the exact type formula for $p(n)$, the number of partitions of a given natural number $n,$ by Rademacher \cite{Rademacherkloosterman}. He employed the circle method, originally developed by Hardy and Ramanujan, to obtain this result. As it  is stated in terms of \textit{Kloosterman sums}, $A_k(n),$  we define 
\begin{align*}
    A_k(n):= \sum_{\substack{h \text{ (mod } k)\\ \text{gcd}(h,k)=1}} \omega_{h,k} e^{-\frac{2\pi i nh}{k}}.
\end{align*}
For the definition of the multiplier $\omega_{h,k}$ see (\ref{omega}). With this, the exact type formula for $p(n)$ is given by
\begin{align*}
    p(n)=\frac{2 \pi}{\left(24n-1\right)^\frac{3}{4}} \sum_{k=1}^\infty \frac{A_k(n)}{k} I_{\frac{3}{2}} \left( \frac{\pi \sqrt{24n-1}}{6k} \right). \tag{1.2}
\end{align*}
Here, $I_k$ is a modified Bessel function of order $k$ \cite{Temme2015}. 

Building on this, Bridges and Bringmann \cite[Theorem 1.1]{bridges2023rademachertype} used the circle method to derive 
an exact type formula for the number of partitions without sequence\footnote{A partition without sequence is a partition in which no two parts are consecutive numbers.}.   We refrain from stating their formula here, as it contains several types of Kloosterman sums. However, it can be found in \cite[Theorem 1.1]{bridges2023rademachertype}. Bringmann and Manschot then build on this method to derive the first example of an exact type formula for a mixed mock modular form of weight 0 \cite{bringmann2011sheavesp2generalizationrademacher}. 

The main result of this paper is the Rademacher exact type formula for $\text{pod}_2(n),$ which was derived by employing the circle method as presented in the paper by Bridges and Bringmann in \cite{bridges2023rademachertype} and Mauth in \cite{mauth2025exactformula1lowerrun}. In order to state this formula, we need to define for $b \in \mathbb{R},  k\in \mathbb{N} $ and $v\in \mathbb{Z}$ the integrals
\begin{align*}
   \mathbb{I}_{b,k,v}(n):=\int_{-1}^{1} \frac{\sqrt{1-x^2}I_1\left( \frac{4\pi \sqrt{\left(n+\frac{1}{2} \right) \left( 1-x^2\right)}}{bk} \right)}{\tanh \left( \frac{\pi i (6v+2)}{3k} - \frac{2\pi x}{\sqrt{3}bk} \right)} \text{d}x \tag{1.3}
\end{align*}
and
\begin{align*}
    \mathbb{J}_{k,v}(n):= \int_{-1}^{1} \frac{\sqrt{1-x^2}I_1\left( \frac{\pi \sqrt{5\left(n+\frac{1}{2} \right) \left(1-x^2 \right)}}{3k} \right)}{\tanh \left( \frac{\pi i \left(v-\frac{1}{6} \right)}{k} - \frac{\sqrt{5}\pi x}{6\sqrt{3}k} \right)} \text{d}x. \tag{1.4}
\end{align*}
With this, we can now state the main result of this work:
\begin{theorem}\label{mainresult}
    \textit{For} $n \in \mathbb{N}_0$ \textit{a Rademacher exact type formula for} $\text{pod}_2(n)$ \textit{is given by} 
    \begin{align*}\label{finalform}
    \text{pod}_2(n)&= \frac{\pi}{3\sqrt{3\left(n+\frac{1}{2} \right)} } \sum_{\substack{k=1\\\text{gcd}(k,6)=6}}^\infty \frac{1}{k^2} \sum_{v=0}^{\frac{k}{2}-1} (-1)^v  K_k^{[621]}(v,n) \mathbb{I}_{\sqrt{6},k,v}(n)
    \\ 
    &+\frac{\pi }{9\sqrt{3\left(n + \frac{1}{2} \right)}} \sum_{\substack{k=1\\\text{gcd}(k,6)=2}}^\infty \frac{1}{k^2 } \sum_{v=0}^{\frac{k}{2}-1} (-1)^v  K_k^{[221]}(v,n) \mathbb{I}_{3\sqrt{2},k,v}(n)
    \\
    &+\frac{\pi}{3\sqrt{2n+1}}\sum_{\substack{ k=1\\ \text{gcd}(k,6)=2 }}^\infty  \frac{K_k^{[231]}(n)}{k} I_1 \left( \frac{2\pi \sqrt{2n+1  }}{3k} \right)
    \\
    &+\frac{5\pi i}{72 \sqrt{3 \left(n+\frac{1}{2}\right)}} \sum_{\substack{k=1\\ \text{gcd}(k,6)=1}}^{\infty} \frac{1}{k^2} \sum_{v=0}^{k-1}  K_k^{[121]}(v,n) \mathbb{J}_{k,v}(n). \tag{1.5}
\end{align*}
\end{theorem}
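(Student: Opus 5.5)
The plan is to apply the extended circle method of Bridges--Bringmann and Mauth to the representation (\ref{PODrho}). By Cauchy's theorem,
\[
\text{pod}_2(n)=\int_{i}^{i+1}\text{POD}_2(q)\,q^{-n}\,\text{d}\tau, \qquad q=e^{2\pi i\tau},
\]
and we split the path along the Rademacher path (Ford circles, Farey fractions of order $N$) into arcs around each $h/k$. On each arc we need the behaviour of $\text{POD}_2$ under $\tau\mapsto \frac{h}{k}+\frac{iz}{k}$, with $z\to 0$.

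The eta-quotient $\frac{(q^3;q^3)_\infty}{(q^6;q^6)_\infty(q;q)_\infty}$ transforms via the Dedekind eta transformation, with multipliers $\omega_{h,k}$ from (\ref{omega}). Its principal part depends on $\gcd(k,6)$, since $\gcd(3k',6k'')$-type reductions change which eta factors contribute exponential growth. For $\rho(q)$ we use the known transformation law of the third order mock theta functions (Watson / Gordon--McIntosh). Here $\rho$ maps to a combination of $\rho$, $\sigma$-type companions and a Mordell integral
\[
\int_{\mathbb{R}}\frac{e^{-a\pi z x^2}}{\cosh(\ldots)}\,\text{d}x,
\]
again with the shape of the formula depending on $\gcd(k,6)$. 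This explains the four cases in (\ref{finalform}): $\gcd(k,6)=6$, $\gcd(k,6)=2$ (two contributions), and $\gcd(k,6)=1$.

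Multiplying the two transformations, for each residue class of $k$ we isolate the finitely many terms with exponential growth $e^{c/(kz)}$; all remaining $q$-expansion terms are bounded on the arcs.

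\begin{itemize}
\item The purely modular principal term (from the eta-quotient times the constant term of the transformed $\rho$) is handled exactly as in Rademacher's proof. Extending the arcs to the full circle gives the Bessel term $I_1\bigl(\frac{2\pi\sqrt{2n+1}}{3k}\bigr)$ with the Kloosterman sum $K_k^{[231]}(n)$.
\item For the Mordell-integral terms we first write the $\cosh$-kernel integral as a sum over residues $v$ modulo $k/2$ or $k$, absorbing roots of unity into $K_k^{[621]},K_k^{[221]},K_k^{[121]}$. We then interchange the $x$-integral with the $\tau$-integral. The inner integral, after completing the arc to a circle, is evaluated by the standard Bessel integral representation. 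This yields $I_1$ of argument $\propto\sqrt{(n+\tfrac12)(1-x^2)}$, and the substitution rescaling $x$ to $[-1,1]$ produces exactly $\mathbb{I}_{b,k,v}(n)$ and $\mathbb{J}_{k,v}(n)$. Only the portion $|x|<1$ carries a principal part; for $|x|\ge1$ the integrand is non-growing and contributes to the error.
\end{itemize}

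The main obstacle is the error analysis as $N\to\infty$. The non-principal parts are bounded via $|z|$-estimates on Farey arcs, which is routine. The hard part is that the Kloosterman-type sums attached to the Mordell integrals carry an extra $v$-sum and a $1/\tanh$ weight, and at the arc endpoints only partial sums over $h$ appear. Trivial bounds give $O(k)$ per sum, which loses against the $k^{-2}$ decay when summed over $k\le N$.

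My first attempt will follow Bridges--Bringmann. Using the sum over $h$ in residue classes with the Farey-endpoint constraint, I will prove a Weil/Sali\'e-type bound $O\bigl(k^{1/2+\varepsilon}\gcd(n,k)^{1/2}\bigr)$ for these generalized sums, uniformly in $v$ and in restricted $h$-intervals. I will also control the Mordell integrals uniformly, bounding the $1/\cosh$ kernel away from its poles: the poles at $\frac{\pi i(6v+2)}{3k}$ never coincide with real $x$ because $3\nmid 6v+2$. Together these make the boundary and tail contributions $O(N^{-\delta})$, which gives (\ref{finalform}).
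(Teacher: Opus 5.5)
Your analytic machinery matches the paper's: Farey dissection, the splitting (4.3)--(4.7) into a complete part plus incomplete Kloosterman sums handled by Estermann's completion trick, truncation of the Mordell integrals to $|x|<1$, and Bessel evaluation of the principal parts. The genuine gap is the input everything rests on. You assume a transformation law for $\rho(q)$ at an \emph{arbitrary} cusp $h/k$, with explicit roots of unity and an explicit finite $v$-sum of Mordell integrals, and attribute it to Watson and Gordon--McIntosh. Their formulas are essentially of the type $q=e^{-\alpha}\mapsto q_1=e^{-\beta}$ with $\alpha\beta=\pi^2$, not laws at every $q=e^{2\pi i(h+iz)/k}$. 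Without such a law you cannot write down $K_k^{[621]}$, $K_k^{[221]}$, $K_k^{[121]}$, nor the growth exponents that produce $\mathbb{I}_{b,k,v}$ and $\mathbb{J}_{k,v}$. The missing idea is the paper's first step. Ramanujan's identity $2\rho(q)+\omega(q)=3\frac{(q^6;q^6)_\infty^2}{(q^3;q^6)_\infty^2(q^2;q^2)_\infty}$ turns (1.1) into $\text{POD}_2=-\frac12\zeta_1\omega+\frac32\zeta_2$. After that, only Andrews' general-cusp formulas for $\omega$ ((2.14) for even $k$, (2.15) for odd $k$) and the eta-quotient laws (2.4)--(2.11) are needed. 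Any derivation of a general law for $\rho$ would naturally run through this identity anyway.

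This decomposition also changes the bookkeeping in a way your outline does not anticipate. The term with $I_1\bigl(\frac{2\pi\sqrt{2n+1}}{3k}\bigr)$ and $K_k^{[231]}(n)$ comes from $\frac32\zeta_2$ at $\gcd(k,6)=2$. At $\gcd(k,6)=1$, both pieces (the $f(q_1^{1/2})$-term of $-\frac12\zeta_1\omega$ and the $\zeta_2$-term) have principal parts of growth $e^{\pi/(9kz)}$. These are absent from (1.5) only because they cancel, by $K_k^{[111]}(n)=-K_k^{[131]}(n)$ in (3.1). Your outline has to account for the absence of a modular Bessel term for $\gcd(k,6)\neq 2$, and at $\gcd(k,6)=1$ this is a genuine cancellation rather than a matter of growth exponents.

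The second missing step is what makes any Kloosterman bound available. The sums carry multiplier quotients such as $\omega_{h,k/6}\omega_{h,k/2}\omega_{h,k}/\omega_{h,k/3}$. Section 3 shows, via (2.3), that these collapse to a constant times $e^{\frac{2\pi i}{k}(-\alpha h+\beta h')}$, with $\alpha,\beta$ both in $\mathbb{Z}$ or both in $\mathbb{Z}+\frac12$. This works only after normalising $h'$ by $hh'\equiv-1\pmod{36k}$ resp. $\pmod{4k}$, and imposing $3\mid h'$, $2\mid h'$, $6\mid h'$ in the respective cases, which are then substituted away. The half-integral case is moved to modulus $2k$ by (2.22)--(2.23), and only then do classical bounds apply.

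Also, the endpoint condition $N<k+k_1\le\ell$ restricts $h'$ (since $k_1\equiv h'\pmod k$), not an interval of $h$; Estermann's trick expands it in additive characters of $h'$. Once the reduction is in place, Weil's bound would indeed give your $O(k^{1/2+\varepsilon}\gcd(n,k)^{1/2})$, which is stronger than the $O(k^{2/3+\varepsilon}\gcd(n,k)^{1/3})$ the paper uses; either suffices. But a ``Weil/Sali\'e-type bound uniformly in $v$'' cannot be proved before the multipliers are put in this linear form.
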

Here the $K_k$ are the Kloosterman sums defined in Section 3 (more precisely: at the beginning of Subsection 3.1, 3.2, and 3.4). 

As this paper is rather technical in nature, we want to provide a brief overview of its content. We employ the circle method in order to derive an exact type formula for pod$_2(n)$. For this we rewrite the generating function POD$_2$ of pod$_2(n)$ as a sum of two functions, whose transformation properties are well-known. We then evaluate some Mordell-type integrals as well as give bounds for Kloosterman like sums, that come up in the calculations related to the circle method. With these results, we derive the main result.

This paper is structured in the following way: In Section 2, we supply some preliminaries, we will use in the following sections. Section 3 deals with the Kloosterman sums. Here we will rewrite the multipliers and give bounds for the sums used in Section 4. In the fourth and final section, the circle method is applied to the function POD$_2(q)$ and the main result, as stated above, is derived. 

\section*{Acknowledgements}
I want to express thanks to my dissertation advisor Kathrin Bringmann for suggesting the topic of this paper and for guidance in the  research endeavor. I also want to thank my colleagues Lukas Mauth  for insightful discussions and helpful feedback on the numerical verification of the main result and Caner Nazaroglu for giving valuable feedback to an earlier version of this manuscript. This research presented in this paper was funded by the European Research Council (ERC) under the European Union's Horizon 2020 research and innovation programme (grant agreement No. 101001179). 

\section{Preliminaries}
\subsection{Rewriting POD$_2$}
Ramanujan's third order mock theta function $\omega(q)$ is defined via \cite[(2.1.5)]{RamanujanV}
\begin{align*}
    \omega(q):= \sum_{n=0}^\infty \frac{q^{2n(n+1)}}{(q;q^2)^2_{n+1}}.
\end{align*}
By an identity that also goes back to Ramanujan, $\omega$ is related to $\rho$ by \cite[(2.3.6) and (2.3.11)]{RamanujanV}
\begin{align*}
    2\rho(q)+\omega(q)=  3 \frac{\left(q^6;q^6\right)^2_\infty}{\left(q^3;q^6 \right)_\infty^2 \left(q^2;q^2\right)_\infty}.
\end{align*}
 Rearranging  and simplifying gives
 \begin{align*}
     \rho(q)&= -\frac{1}{2} \omega(q) + \frac{3}{2} \frac{\left(q^6;q^6\right)^2_\infty}{\left(q^3;q^6 \right)_\infty^2\left(q^2;q^2\right)_\infty} 
     \\&=      -\frac{1}{2} \omega(q) + \frac{3}{2}  \frac{\left(q^6;q^6\right)^4_\infty}{\left(q^3;q^3 \right)_\infty^2 \left(q^2;q^2\right)_\infty}.
 \end{align*}
As the modular properties of $\omega(q)$ and $P \left(q^r \right):= \frac{1}{\left(q^r;q^r \right)_\infty}$ for $r \in \mathbb{N}$ are well-understood (see Subsection 2.2), it is advantageous for us to split POD$_2(q)$ into two terms
\begin{align*}
         \text{POD}_2(q)=- \frac{1}{2} \frac{\left(q^3;q^3\right)_\infty}{\left(q^6;q^6\right)_\infty (q;q)_\infty} \omega(q) + \frac{3}{2} \frac{\left(q^6;q^6\right)^3_\infty}{\left(q^3;q^3\right)_\infty  \left(q^2;q^2\right)_\infty (q;q)_\infty}.
\end{align*}
Denote by
\begin{align*}
     \zeta_1(q):= \frac{P\left(q^6\right)P\left(q\right)}{P\left(q^3\right)}= \frac{\left(q^3;q^3\right)_\infty}{\left(q^6;q^6\right)_\infty (q;q)_\infty},
 \end{align*}
 \begin{align*}
     \zeta_2(q):=\frac{P\left(q^3\right)P\left(q^2\right)P(q)}{P\left(q^6\right)^3}=\frac{\left(q^6;q^6\right)^3_\infty}{\left(q^3;q^3\right)_\infty  \left(q^2;q^2 \right)_\infty (q;q)_\infty}.
 \end{align*}
 With this notation, we get 
 \begin{align*}
     \text{POD}_2(q)= -\frac{1}{2} \zeta_1(q)\omega(q) + \frac{3}{2} \zeta_2(q). \tag{2.1}
 \end{align*}
 Next, we study the modular transformation properties of $\omega(q)$ and $P(q),$ as well as the ones of $\zeta_1(q)$ and $\zeta_2(q)$ resulting from them. 
\subsection{Modular properties}
Let $h$ and $k$  be two coprime integers. If $k$ is odd, denote by $h'$ an integer such that $hh'\equiv -1$ (mod $k$) and denote by $k'$ an integer such that $hh'+kk'=-1$. In the case that $k$ is even, we modify the modulus slightly: If $6|k$ we assume that the congruence $hh'\equiv -1$ holds (mod $36k$), i.e., $36|k'$ and if gcd$(k,6)=2$ we require it to hold (mod $4k$), i.e., $4|k'.$ For a complex variable $z$ with $\text{Re}(z)>0,$ we set
\begin{align*}
    q:= e^{\frac{2\pi i}{k} \left(h+iz \right)} \text{ and } q_1:= e^{\frac{2\pi i}{k} \left(h'+\frac{i}{z} \right)}.
\end{align*}
With this we have the classical modularity property \cite{Andrews1976Theory}
\begin{align*}\label{Ptranform}
    P(q)=\omega_{h,k}  z^{\frac{1}{2}} e^{\frac{\pi \left(z^{-1}-z \right)}{12k}} P\left(q_1 \right), \tag{2.2}
\end{align*}
where $\omega_{h,k}$ is a $24k$-th root of unity given by
\begin{align*}\label{omega}
    \omega_{h,k}:= \begin{cases}
       \left(\frac{-k}{h} \right) e^{-\pi i \left( \frac{1}{4}(2-hk-h)+\frac{1}{12}\left(k -\frac{1}{k} \right) \left(2h-h'+h^2h'\right)\right)}&  \text{if } h \text{ is odd},\\
        \left( \frac{-h}{k} \right) e^{-\pi i \left(\frac{1}{4}(k-1)+\frac{1}{12}\left(k-\frac{1}{k}\right) \left(2h-h'+h^2h'\right)\right) }&  \text{if } k \text{ is odd,} \tag{2.3}\end{cases}
\end{align*}
and $\left(\frac{\cdot}{\cdot}\right)$ denotes the Kronecker symbol.
From (\ref{Ptranform}) one can derive the modular transform of $P\left(q^r\right)$ for $r\in \mathbb{N}.$ We will refrain from citing the transformation formula here, but will simply state the transformations for $\zeta_1$ and $\zeta_2$ resulting from it. The formula can be found for example in \cite[Section 2]{Bringmann2011AnEO} and \cite[Section 2]{mauth2025exactformula1lowerrun}.
 We start with the transformation of $\zeta_1.$ Here we have to distinguish between the different cases of gcd$(k,6)$.
 \begin{lemma}
     \textit{We have the following transformations for $\zeta_1.$ If} gcd$(k,6)=6$ \textit{we get that }
 \begin{align*} \label{zetatrans6}
     \zeta_1(q)= \frac{\omega_{h,\frac{k}{6}}\omega_{h,k}}{\omega_{h,\frac{k}{3}}} z^\frac{1}{2} e^{\frac{\pi}{3k} \left(z^{-1}-z\right) }\zeta_1\left(q_1\right),\tag{2.4}
 \end{align*}
\textit{for} gcd$(k,6)=2$,
 \begin{align*} \label{zetatrans2}
     \zeta_1(q)= \frac{\omega_{3h,\frac{k}{2}}\omega_{h,k}} {\omega_{3h,k}} z^\frac{1}{2} e^{\frac{\pi}{9kz}-\frac{\pi z}{3k}} \frac{P\left(q_1 \right)P\left(q_1 ^\frac{2}{3}\right)}{P\left(q_1^\frac{1}{3}\right)}\tag{2.5},
 \end{align*}
\textit{for} gcd$(k,6)=3$,
\begin{align*}\label{zetatrans3}
    \zeta_1(q)= \sqrt{2} \frac{\omega_{2h,\frac{k}{3}}\omega_{h,k}}{\omega_{h,\frac{k}{3}}} z^{\frac{1}{2}} e^{-\frac{\pi}{24kz}-\frac{\pi z}{3k}} \frac{P\left(q_1\right)P\left(q_1^\frac{3}{2}\right)}{P\left(q_1^3\right)}.\tag{2.6}
\end{align*}
\textit{and for} gcd$(k,6)=1$,
 \begin{align*}\label{zetatrans1}
     \zeta_1(q)=\sqrt{2}\frac{\omega_{6h,k}\omega_{h,k}}{\omega_{3h,k}}z^\frac{1}{2} e^{ \frac{5\pi}{72kz}-\frac{\pi z}{3k}} \frac{P\left(q_1\right)P\left(q_1^\frac{1}{6} \right)}{P\left(q_1^\frac{1}{3}\right)}.\tag{2.7}
 \end{align*}
\textit{For $\zeta_2$ we have the following transformations: If }gcd$(k,6)=6,$ \textit{we have }
 \begin{align*}\label{zeta2trans6}
     \zeta_2(q)=\frac{\omega_{h,\frac{k}{3}}\omega_{h,\frac{k}{2}}\omega_{h,k}}{\omega_{h,\frac{k}{6}}^3} e^{-\frac{\pi}{k} \left(z^{-1} -z\right)} \zeta_2 \left(q_1\right), \tag{2.8}
 \end{align*}
 \textit{for} gcd$(k,6)=2$,
 \begin{align*}\label{zeta2trans2}
     \zeta_2(q)=\frac{1}{3}\frac{\omega_{3h,k}\omega_{h,\frac{k}{2}}\omega_{h,k}}{\omega_{3h,\frac{k}{2}}^3}  e^{\frac{\pi}{9kz}+\frac{\pi z}{k}} \frac{P\left(q_1^\frac{1}{3}\right)P\left(q_1^2\right)P\left(q_1\right)}{P\left(q_1^\frac{2}{3}\right)^3},\tag{2.9}
\end{align*}
 \textit{for} gcd$(k,6)=3,$ 
 \begin{align*}\label{zeta2trans3}
     \zeta_2(q)= \frac{1}{2}\frac{\omega_{h,\frac{k}{3}}\omega_{2h,k}\omega_{h,k}}{\omega_{2h,\frac{k}{3}}^3}  e^{\frac{\pi z}{k}} \frac{P\left(q_1^3\right)P\left(q_1^\frac{1}{2}\right)P\left(q_1\right)}{P\left(q_1^\frac{3}{2}\right)^3},\tag{2.10}
 \end{align*}
\textit{and for} gcd$(k,6)=1$ 
 \begin{align*}\label{zeta2trans1}
     \zeta_2(q)= \frac{1}{6}\frac{\omega_{3h,k}\omega_{2h,k}\omega_{h,k}}{\omega_{6h,k}^3} e^{\frac{ \pi}{9kz} + \frac{ \pi z}{k}} \frac{P\left(q_1^\frac{1}{3}\right)P\left(q_1^\frac{1}{2}\right)P\left(q_1\right)}{P\left(q_1^\frac{1}{6}\right)^3}. \tag{2.11}
 \end{align*}
  \end{lemma}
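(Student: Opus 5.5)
The plan is to derive every formula in the lemma from the single transformation (\ref{Ptranform}), applied to each factor $P(q^r)$ with $r\in\{1,2,3,6\}$. Write $q^r = e^{\frac{2\pi i}{k}r(h+iz)}$. Let $g:=\gcd(r,k)$, and put $k_r := k/g$, $h_r := rh/g$ and $z_r := rz/g$. Then
\[
q^r = e^{\frac{2\pi i}{k_r}\left(h_r+iz_r\right)}, \qquad \gcd(h_r,k_r)=1 .
\]
Applying (\ref{Ptranform}) with $(h,k,z)$ replaced by $(h_r,k_r,z_r)$ gives
\[
P\left(q^r\right)=\omega_{h_r,k_r}\, z_r^{\frac12}\, e^{\frac{\pi}{12k_r}\left(z_r^{-1}-z_r\right)}\, P\left(e^{\frac{2\pi i}{k_r}\left(h_r'+\frac{i}{z_r}\right)}\right).
\]
Here $h_r'$ is any inverse of $-h_r$ modulo $k_r$ that satisfies the parity and congruence requirements attached to $\omega_{h_r,k_r}$. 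This is the general formula of \cite[Section 2]{Bringmann2011AnEO} and \cite[Section 2]{mauth2025exactformula1lowerrun}, and it can simply be quoted.

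First I compute, for each residue class of $\gcd(k,6)$, the parameters $(h_r,k_r,z_r)$ for $r=1,2,3,6$. For $6\mid k$, all four $r$ divide $k$, so $h_r=h$, $k_r=k/r$, $z_r=z$, and the outer argument becomes $e^{\frac{2\pi i r}{k}(h_r'+i/z)}$. For $\gcd(k,6)=2$, the values $r=2$ (and $r=6$) give $k_r=k/2$, while $r=3$ gives $h_r=3h$, $k_r=k$, $z_r=3z$. The cases $\gcd(k,6)=3$ and $\gcd(k,6)=1$ are analogous, with $2$ and $6$ acting on $h$ and $z$.

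Next I identify the new nome with a power of $q_1=e^{\frac{2\pi i}{k}(h'+i/z)}$. For example, $e^{\frac{2\pi i}{k/r}(h'+i/z)}=q_1^r$ holds exactly when the $h_r'$ chosen for $k_r$ can be taken equal to $h'$ reduced modulo $k/r$. When $h_r=3h$ and $z_r=3z$, the argument is $e^{\frac{2\pi i}{k}(h_r'+\frac{i}{3z})}$. This equals $q_1^{1/3}$ provided $h_r' \equiv h'/3$ in the appropriate sense, i.e.\ $3h_r'\equiv h'\pmod{3k}$. This is exactly why the paper imposes $hh'\equiv-1 \pmod{36k}$ or $\pmod{4k}$: with $36\mid k'$ (resp.\ $4\mid k'$) one can take $h_r'=h'/d$ or $h'$ consistently, and the fractional powers $q_1^{1/3}$, $q_1^{1/6}$, $q_1^{2/3}$, $q_1^{3/2}$ are then unambiguous.

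Finally, I multiply the factors with the exponents $\pm1$ dictated by $\zeta_1=P(q^6)P(q)/P(q^3)$ and $\zeta_2=P(q^3)P(q^2)P(q)/P(q^6)^3$, and collect the three kinds of pieces. The multipliers give the stated quotients of $\omega$'s. The powers $z_r^{1/2}=(rz/g)^{1/2}$ give $z^{1/2}$ for $\zeta_1$ and $z^0$ for $\zeta_2$, together with the constants $\sqrt2$, $\frac13$, $\frac12$, $\frac16$. For instance, when $\gcd(k,6)=1$ and $\zeta_2$, I get $\sqrt{3\cdot2\cdot1/6^3}=1/6$. The exponentials give $\sum \pm \frac{\pi}{12k_r}(z_r^{-1}-z_r)$. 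As a check, for $6\mid k$ and $\zeta_1$ this is
\[
\frac{\pi}{12k}(6+1-3)\left(z^{-1}-z\right)=\frac{\pi}{3k}\left(z^{-1}-z\right).
\]
For $\gcd(k,6)=1$ and $\zeta_1$ one gets
\[
\frac{\pi}{12k}\left(\tfrac{1}{6z}+\tfrac1z-\tfrac{1}{3z}\right)=\frac{5\pi}{72kz},
\]
together with $-\frac{\pi z}{12k}(6+1-3)=-\frac{\pi z}{3k}$, matching (\ref{zetatrans1}). In the case $6\mid k$ the $P$'s recombine into $\zeta_1(q_1)$ and $\zeta_2(q_1)$.

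The exponent and constant bookkeeping is routine. The main obstacle is the step that identifies each transformed nome with the correct (possibly fractional) power of $q_1$ while keeping the $\omega$-indices consistent. One must check that a single $h'$ satisfying the strengthened congruence serves simultaneously as a valid $h_r'$ for every pair $(h_r,k_r)$, including the parity conventions in (\ref{omega}). I would handle this by writing $hh'+kk'=-1$ and verifying directly, case by case, that for example $3h\cdot\frac{h'}{3}\equiv-1$ or $h\cdot h'\equiv -1 \pmod{k/r}$ holds with integral quotients, using $36\mid k'$ or $4\mid k'$.
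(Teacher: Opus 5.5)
Your route is the paper's: apply (2.2) to each factor $P(q^r)$, which is the $P(q^r)$-transformation the paper cites from Bringmann--Mahlburg and Mauth, and then multiply. Your bookkeeping of multipliers, powers of $z$, constants and exponents is correct, and carrying it out reproduces all eight formulas. The gap is in the step you yourself single out, identifying the transformed nome with a power of $q_1$, where you appeal to the wrong hypothesis.

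Write $g=\gcd(r,k)$ and $\rho=r/g$. The new nome is $e^{\frac{2\pi i g}{k}(h_r'+\frac{i}{\rho z})}$, while $q_1^{g^2/r}=e^{\frac{2\pi i g}{k}(\frac{h'}{\rho}+\frac{i}{\rho z})}$. They agree if and only if $\rho h_r'\equiv h' \pmod{\rho k_r}$, which forces $\rho\mid h'$; your own condition $3h_r'\equiv h'\pmod{3k}$ already says $3\mid h'$. The congruences modulo $36k$ or $4k$ say nothing about this. For example, $k=2$, $h=1$, $h'=7$ satisfies $hh'\equiv -1\pmod 8$ with $k'=-4$, but $3h_3'\equiv 7\pmod 6$ has no solution, so $P(q^3)$ does not become $P(q_1^{1/3})$. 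Moreover, for $\gcd(k,6)\in\{1,3\}$, exactly where $q_1^{1/6}$, $q_1^{1/2}$, $q_1^{3/2}$ occur, there is no strengthened modulus at all. For $6\mid k$ every $\rho$ equals $1$, and $h_r'=h'$ works using only $hh'\equiv-1\pmod k$. So $36\mid k'$ and $4\mid k'$ play no role in this lemma. They are used elsewhere: for the factor $e^{-\frac{3\pi i h'k'}{2}}$ coming from (2.14) and for the multiplier reductions modulo $36k$ in Section 3.

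The repair is to add the hypotheses the paper in fact imposes in Section 3 and carries through the sums of Section 4:
$3\mid h'$ if $\gcd(k,6)=2$, $2\mid h'$ if $\gcd(k,6)=3$, and $6\mid h'$ if $\gcd(k,6)=1$. These are compatible with the respective congruence on $h'$ by the Chinese remainder theorem, since $\gcd(3,4k)=1$, resp.\ $\gcd(2,k)=\gcd(6,k)=1$. Then set $h_r'=h'/\rho$. This gives $h_rh_r'=\rho h\cdot h'/\rho=hh'\equiv-1\pmod k$, hence modulo $k_r$. The new nome is then exactly $q_1^{g^2/r}$, for all $r\in\{1,2,3,6\}$ simultaneously.

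On the other hand, your concern about the parity conventions in (2.3) is unfounded. The multiplier $\omega_{h,k}$ is the Dedekind-sum multiplier and does not depend on the choice of $h'$. Indeed, in (2.3) replacing $h'$ by $h'+k$ changes the exponent by $-\frac{\pi i}{12}(k^2-1)(h^2-1)\in 2\pi i\mathbb{Z}$. This holds because $24\mid (k^2-1)(h^2-1)$ whenever one of $h,k$ is odd and $\gcd(h,k)=1$. So multipliers such as $\omega_{3h,k}$ or $\omega_{2h,k/3}$ need no compatibility check; only the nome identification does.
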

 \subsection{Modular properties of $\omega$}
 In order to state the corresponding transformation formulas for $\omega,$ we need to define Ramanujan's third order mock theta functions $f$,  namely \cite[(2.1.1)]{RamanujanV}
 \begin{align*}
     f(q):= \sum_{n=0}^\infty \frac{q^{n^2}}{(-q;q)^2_n}.
 \end{align*}
 We also require the two  Mordell-type integrals
 \begin{align*}
     J_{k,v}(z):= \int_{-\infty}^\infty \frac{\exp\left(\frac{-6\pi z x^2}{k}\right)}{\tanh\left(\frac{\pi i  \left(v -\frac{1}{6} \right)}{k}-\frac{2\pi z x}{k}\right)}\text{d}x, \tag{2.12}
 \end{align*}
  \begin{align*}
     I_{k,v}(z):= \int_{-\infty} ^\infty \frac{\exp\left( -\frac{6\pi z x^2}{k} \right)}{\tanh \left( \frac{\pi i  \left(6v+2\right)}{3k} - \frac{ 2\pi z x}{k} \right)} \text{d}x. \tag{2.13}
 \end{align*}
 Using \cite[Theorem 2.4]{Andrewstrans} we have for even $k$ that
 \begin{align*} \label{omegakeven}
     \omega \left(q\right) &= \omega \left(e^{\frac{\pi i }{k/2} \left(h+iz \right)} \right)
     \\&= i(-1)^{\frac{1}{2} (h'+1)}\exp\left(- \frac{3\pi i h'k'}{2} + \frac{3\pi i h'}{2k} - \frac{3\pi i h}{2k}\right) \omega_{h,\frac{k}{2}} z^{-\frac{1}{2}} \exp\left(-\frac{4\pi }{3kz} +\frac{4 \pi z}{3k}  \right) \omega \left(q_1 \right) 
      \\&\hspace{0.5cm}-2 k^{-1} (-1)^{\frac{1}{2} (h'+1)} \exp\left( -\frac{3\pi i  h'k'}{2} -\frac{3\pi i h}{2k} \right) \omega_{h,\frac{k}{2}} z^{\frac{1}{2}} \exp \left(\frac{4\pi z}{3k} \right)\\ &\hspace{1cm} \times\sum_{v \text{ }\left(\text{mod }\frac{k}{2}\right)} (-1)^v \exp\left( -\frac{6\pi i h' \mu^2}{k} + \frac{2\pi i h' \mu }{k} \right) I_{k,v}(z), \tag{2.14}
     \end{align*}
     where $\mu=v+\frac{1}{2}.$ If $k$ is odd, we have by \cite[Theorem 2.3]{Andrewstrans}
\begin{align*}\label{omegakodd}
    \omega \left( q\right) &= \omega \left( e^{\frac{\pi i}{k} \left(2h+i2z \right)}\right)
    \\&=  \frac{1}{2 \sqrt{2}} z^{-\frac{1}{2}} (-1)^{\frac{1}{2} (k-1)} \exp\left(\frac{3\pi i h k}{2} - \frac{3\pi i h}{2k} \right) \omega_{2h,k} \exp\left( \frac{\pi}{24kz}+\frac{4\pi z}{3k}\right) f\left( q_1^\frac{1}{2} \right)
    \\& \hspace{0.5cm}+ i(-1)^{\frac{1}{2}(k-1)}\exp\left( \frac{3\pi i hk}{2} -\frac{3\pi i h}{2k} + \frac{4\pi z}{3k} \right) k ^{-1} (2z)^{\frac{1}{2}} \omega_{2h,k} \\
    &\hspace{1cm} \times\sum_{v \text{ } \left(\text{mod } k \right)} \exp\left( \frac{-3\pi i h'v^2}{2k} + \frac{\pi i h'v}{2k} \right)J_{k,v}(z). \tag{2.15}
\end{align*}
\subsection{Farey sequence}
For a natural number $N\in\mathbb{N}$ the Farey sequence of order $N$, $F_N$ is an ordered  set of fractions $\frac{h_i}{k_i}$ with $h_i \in \mathbb{N}_0, k_i \in \mathbb{N}$,  $h_i \leq  k_i \leq N$ and gcd$(h_i,k_i)=1.$ It holds that
\begin{align*}
    \frac{h_i}{k_i} < \frac{h_j}{k_j} \text{ if } i<j.
\end{align*}
For adjacent fractions $\frac{h_1}{k_1}<\frac{h}{k}<\frac{h_2}{k_2}$ in the Farey sequence of order $N,$ then we denote by
\begin{align*}
    \vartheta_{h,k}':=\frac{1}{k(k+k_1)}\text{ and } \vartheta_{h,k}'':= \frac{1}{k(k+k_2)}. \tag{2.16}
\end{align*}
Here and throughout we set $z:= k\left(N^{-2}-i\Phi \right) $ with $-\vartheta_{h,k}' \leq \Phi \leq \vartheta_{h,k}''.$ Furthermore, it is a well known fact \cite{Rademacher1938TheFC} that 
\begin{align*}
    \frac{1}{k+k_1},\frac{1}{k+k_2} \leq \frac{1}{N+1}.
\end{align*}
Another property of adjacent Farey fractions is that \cite{Estermann1929}
\begin{align*}
    hk_1-h_1k=1 \text{ and } hk_2-kh_2=-1.
\end{align*}
From this we immediately get
\begin{align*}\label{k1 mod}
    k_1 \equiv h' \text{ (mod } k) \text{ and } k_2\equiv -h' \text{ (mod } k). \tag{2.17}
\end{align*}
\subsection{Rademacher Kloosterman sums and classical bounds}
When employing the circle method, Kloosterman sums appear and need to be bounded during the course of the calculation. A classical bound for these sums goes back to Salié \cite{Salie1933} and is also stated by Rademacher  in \cite{Rademacher1938TheFC}. Rademacher also stated the bound for the second sum given in the next lemma. However, he does not give a direct proof of this, but referes to Estermann  \cite{Estermann1929}. In \cite{Estermann1929} Estermann uses a trick to derive a bound for incomplete Kloosterman sums that utilizes the bound for the complete Kloosterman sums.  We will also employ his trick at a later point in this paper. 
\begin{lemma}\label{lemmaBoundsKM} \textit{Let $k\in\mathbb{N},$ $n,m,\ell\in\mathbb{Z},$ $n\neq 0, $ and $N+1\leq \ell \leq N+k-1.$  Furthermore, denote by $k_1$ the denominator of the fraction proceeding $\frac{h}{k}$ in the Farey sequence of order $N$ and let $\varepsilon>0.$ Then we have }
\begin{align*}
    H_k(n,m):= \sum_{\substack{0 \leq h <k\\\text{gcd}(h,k)=1\\hw\equiv -1 \text{ } (k)}} e^{-\frac{2\pi i }{k} \left(nh-mw \right)} =O_\varepsilon\left(k^{\frac{2}{3}+\varepsilon} \text{gcd}(|n|,k)^\frac{1}{3} \right), \tag{2.18}
    \\
     \mathbb{H}_{k,l}(n,m):= \sum_{\substack{0 \leq h < k \\\text{gcd}(h,k)=1\\hw\equiv -1 \text{ } (k)\\N\leq k+k_1\leq l}} e^{-\frac{2\pi i }{k} \left(nh-mw \right)} =O_\varepsilon\left(k^{\frac{2}{3}+\varepsilon} \text{gcd}(|n|,k)^\frac{1}{3} \right) \tag{2.19}.
\end{align*}
\textit{In the sums we use the abbreviation $hw\equiv-1 \text{ }(k)$ instead of $hw\equiv -1$ (mod $k$).} 
\end{lemma}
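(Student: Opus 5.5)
The plan is to derive (2.18) from the classical Weil--Salié-type bound for Kloosterman sums. Then (2.19) follows from (2.18) by Estermann's completion trick, which turns an incomplete sum over an interval into an average of complete sums with twisted arguments.

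\emph{Step 1: the complete sum.} Since $hw\equiv -1 \pmod k$ means $w\equiv -\bar h$, we have
\[
H_k(n,m)=\sum_{h\in(\mathbb{Z}/k\mathbb{Z})^\times} e^{-\frac{2\pi i}{k}(nh+m\bar h)}=S(-n,-m;k),
\]
the classical Kloosterman sum. The bound $S(a,b;k)\ll_\varepsilon k^{\frac12+\varepsilon}\gcd(a,b,k)^{\frac12}$ is stronger than what we need. For (2.18) it suffices to quote Salié's (or Kloosterman's) bound $S(a,b;k)\ll_\varepsilon k^{\frac23+\varepsilon}\gcd(a,k)^{\frac13}$, which is exactly the form stated in \cite{Salie1933} and \cite{Rademacher1938TheFC}. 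If one instead uses Weil's bound, note that $\gcd(n,m,k)^{1/2}\le \gcd(n,k)^{1/2}$ and $k^{1/2}\gcd(n,k)^{1/2}\le k^{2/3}\gcd(n,k)^{1/3}$, because $\gcd(n,k)\le k$. The hypothesis $n\neq 0$ ensures that $\gcd(|n|,k)$ is a meaningful, uniform quantity. Either way, (2.18) holds with an implied constant independent of $m$.

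\emph{Step 2: translating the Farey condition.} By (2.17), $k_1\equiv h'\pmod k$, and here $h'\equiv w$. Since $N-k<k_1\le N$, the integer $k+k_1$ is the unique representative of $w \bmod k$ lying in the window $(N,N+k]$. Hence the condition $N\le k+k_1\le \ell$ is equivalent to $w$ lying in a set of consecutive residues modulo $k$, namely the residues of an interval $I$ of length at most $k$, and this holds uniformly in $\ell$.

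\emph{Step 3: Estermann's completion.} Write the indicator of $I$ in terms of additive characters:
\[
\mathbf 1_I(w)=\frac1k\sum_{r\bmod k}\sum_{x\in I}e^{\frac{2\pi i r(x-w)}{k}}.
\]
Inserting this and swapping the sums gives
\[
\mathbb H_{k,\ell}(n,m)=\frac1k\sum_{r\bmod k}\Big(\sum_{x\in I}e^{\frac{2\pi i rx}{k}}\Big)H_k(n,m-r).
\]
The inner geometric sum is $\ll \min(k,\|r/k\|^{-1})$. Applying (2.18) to each $H_k(n,m-r)$ is possible because that bound is uniform in the second argument. This yields
\[
\mathbb H_{k,\ell}(n,m)\ll k^{\frac23+\varepsilon}\gcd(|n|,k)^{\frac13}\cdot\frac1k\sum_{r\bmod k}\min\!\big(k,\|r/k\|^{-1}\big)\ll k^{\frac23+\varepsilon}\gcd(|n|,k)^{\frac13}\log k.
\]
The factor $\log k$ is absorbed into $k^\varepsilon$ after renaming $\varepsilon$, which gives (2.19).

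The main obstacle is Step 2: checking carefully that the Farey condition on $k+k_1$ really corresponds to a single interval of residues for $w$. This relies on the bounds $N-k<k_1\le N$ and on the congruence (2.17); the endpoint cases $h=0$ and $k=1$ need a separate check. The rest is standard.
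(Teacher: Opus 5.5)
Your proposal is correct and takes essentially the paper's route: (2.18) is Salié's bound, which the paper cites from \cite{Salie1933} and \cite{Rademacher1938TheFC}, and (2.19) follows from it by Estermann's completion trick, which the paper carries out in the proof of Corollary~\ref{halfbounds} by Fourier-expanding the periodic indicator and using $\sum_r |a_r|<\log(4k)$. One minor point in Step 2: $hk_1-h_1k=1$ gives $hk_1\equiv 1$, i.e.\ $k_1\equiv -h'\pmod k$, so the signs in (2.17) are swapped; since the negative of a block of consecutive residues is again such a block, your interval argument and the resulting bound are unaffected.
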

For abbreviation, we write $H_k(n):=H_k(n,0).$ We refer to sums of the second type, where we have an additional condition on $h$ as \textit{incomplete Kloosterman sums}. As some of the Kloosterman sums we will later encounter are somewhat atypical compared to the ones defined above\footnote{The sums we consider often have  different moduli conditions, i.e., if $6|k$ we require $hh'\equiv -1$ (mod 36$k$) and in some cases arguments that are not integral but half integral.} , we need to define another family of Kloosterman like sums.
\begin{defin}\label{MyKMSums}
    \textit{Let $N\in \mathbb{N}$, $k\in \mathbb{N}$ be even, $N+1 \leq \ell \leq N+k-1$, $a\in\{1,2\}$ and $n$ and $m$ half integers. Further denote by $k_1$ the denominator of the fraction proceeding $\frac{h}{k}$  in the Farey sequence of order $N$. Then we define}
    \begin{align*}
        K_{k,a}(n,m):= \sum_{\substack{0\leq h<ak\\ \text{gcd}(h,ak)=1 \\hh'\equiv -1 \text{ }(k^*)  }}e^{\frac{2\pi i}{ak} \left(-nh+mh' \right)}, \tag{2.20}
    \end{align*}
    \begin{align*}
        \mathbb{K}_{k,\ell}(n,m):= \sum_{\substack{0\leq h<k\\ \text{gcd}(h,k)=1 \\hh'\equiv -1 \text{ }(k^*) \\ N<k+k_1\leq \ell }}e^{\frac{2\pi i}{k} \left(-nh+mh' \right)}, \tag{2.21}
    \end{align*}
    \textit{where}
    \begin{align*}
        k^*:= \begin{cases}
            4k &  \text{if gcd}(k,6)=2,\\
            36k & \text{if }6|k.
        \end{cases}
    \end{align*}
\end{defin}
We will show that the bounds similar to the ones in Lemma \ref{lemmaBoundsKM} also apply to the sums defined above. For this, we firstly show that we can rewrite sums where both arguments are elements of $\mathbb{Z}+ \frac{1}{2}$ and $a=1$ as a sum with the same $k,$ where $a=2$ and the same arguments multiplied by 2, making them integral: 
\begin{lemma}\label{Modulshift}
\textit{For even $k \in \mathbb{N}$ and odd integers $m,n$ it holds that }   
\begin{align*}
    K_{k,2}(n,m)=2 K_{k,1}\left(\frac{n}{2},\frac{m}{2} \right).
\tag{2.22}\end{align*}
\end{lemma}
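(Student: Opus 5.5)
We prove the identity by pairing each $h$ in the range of $K_{k,2}$ with a representative modulo $k$ and checking that the summands agree term by term. First we compare the exponentials. In $K_{k,2}(n,m)$ the summand is
\[
e^{\frac{2\pi i}{2k}\left(-nh+mh'\right)}=e^{\frac{\pi i}{k}\left(-nh+mh'\right)}.
\]
In $K_{k,1}\left(\frac n2,\frac m2\right)$ the summand is $e^{\frac{2\pi i}{k}\left(-\frac n2 h+\frac m2 h'\right)}$, which is the same expression. So the two sums differ only in their ranges of summation: $0\le h<2k$ with $\gcd(h,2k)=1$ versus $0\le h<k$ with $\gcd(h,k)=1$. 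The identity therefore follows if we split $[0,2k)$ into $h$ and $h+k$ with $0\le h<k$, and show that both contribute the same summand.

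Next we match the coprimality conditions. Since $k$ is even, $\gcd(h,k)=1$ forces $h$ to be odd, so $\gcd(h,2k)=1$ holds if and only if $\gcd(h,k)=1$. The same holds for $h+k$, because $\gcd(h+k,k)=\gcd(h,k)$.

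We also check that the summand is well defined. Here $h$ is odd and coprime to $k$. When $6\mid k$, this also gives $3\nmid h$, so $h$ is invertible modulo $k^*$. Hence $h'$ is determined modulo $k^*$, which is a multiple of $2k$. The summand depends only on $h'$ modulo $2k$, so it does not depend on the choice of $h'$.

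The main step is to compare the summands at $h$ and $h+k$. Let $h'$ satisfy $hh'\equiv-1\pmod{k^*}$ and let $h''$ satisfy $(h+k)h''\equiv -1 \pmod{k^*}$. Reducing modulo $k$ gives $h''\equiv h'\pmod k$, so we may write $h''=h'+tk$ with $t\in\mathbb{Z}$. Reducing the defining congruence modulo $2k$, which divides $k^*$, we get
\[
-1\equiv (h+k)(h'+tk)=hh'+k(h'+th)+tk^2\equiv -1+k(h'+th)\pmod{2k}.
\]
Here we used $k^2\equiv0\pmod{2k}$, which holds because $k$ is even. Hence $h'+th$ is even. Both $h$ and $h'$ are odd, since $hh'\equiv-1$ modulo an even number. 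It follows that $t$ is odd, and so $h''\equiv h'+k\pmod{2k}$.

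Therefore the summand at $h+k$ equals
\[
e^{\frac{\pi i}{k}\left(-nh+mh'\right)}\,e^{-\pi i n}\,e^{\pi i m}=e^{\frac{\pi i}{k}\left(-nh+mh'\right)},
\]
because $n$ and $m$ are odd, so each extra factor is $-1$ and their product is $1$. Summing over $0\le h<k$ with $\gcd(h,k)=1$, the terms at $h$ and at $h+k$ each give $K_{k,1}\left(\frac n2,\frac m2\right)$. This yields $K_{k,2}(n,m)=2K_{k,1}\left(\frac n2,\frac m2\right)$.

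The only delicate point is the parity argument for $t$. It uses only that $2k\mid k^*$ and that $k$ is even, so it covers both cases $\gcd(k,6)=2$ and $6\mid k$ at once.
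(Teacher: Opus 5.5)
Your proof is correct and follows the paper's argument. Both split $h$ and $h+k$, match the coprimality conditions, show that the inverse of $h+k$ is $h'$ shifted by an odd multiple of $k$, and cancel the signs $(-1)^n(-1)^m$. The only difference is that the paper writes the shift explicitly as $\ell(h,k)=-h'[h+k]_4$ (resp.\ $-h'[h+k]_{36}$), whereas you deduce its oddness from the congruence modulo $2k$, which treats both choices of $k^*$ at once.
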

\begin{proof}
With the definitions above, we have
\begin{align*}
    K_{k,2}(n,m)&=\sum_{\substack{0\leq h<2k\\ \text{gcd}(h,2k)=1 \\hh'\equiv-1 \text{ }(k^*) }}e^{\frac{2\pi i}{2k} \left(-nh+mh' \right)}
    \\&=\sum_{\substack{0\leq h <k \\ \text{gcd}(h,k)=1\\hh'\equiv -1 \text{ }(k^*) }} e^{\frac{2 \pi i}{k} \left( -\frac{n}{2} h+ \frac{m}{2}h' \right)} +\sum_{\substack{k\leq h <2k \\ \text{gcd}(h,k)=1\\hh'\equiv -1  \text{ }(k^*) }} e^{\frac{2 \pi i}{k} \left(- \frac{n}{2}h+\frac{m}{2}h' \right)}
    \\&= K_{k,1}\left( \frac{n}{2}, \frac{m}{2} \right) + \sum_{\substack{0\leq h <k \\ \text{gcd}(h,k)=1\\(h+k)(h+k)'\equiv -1 \text{ }(k^*)}} e^{\frac{2 \pi i}{k} \left(- \frac{n}{2}(h+k)+\frac{m}{2}(h+k)' \right)},
\end{align*}
as $(h,k)=1$ if and only if $(h,2k)=1,$ as $k$ is even.  
Next, we show that the sum on the right-hand side equals $K_{k,1}\left( \frac{n}{2},\frac{m}{2} \right).$ To see this, we show that for a fixed $h$, we have that $(h+k)'=h'+\ell(h,k) k$ with an odd $\ell(h,k).$ To show this, we give a formula for $\ell(h,k).$
\\If gcd$(k,6)=2$ we have $k^*=4k$ and we choose
\begin{align*}
    \ell(h,k):= -h'[h+k]_4,
\end{align*}
where $[h+k]_4$ is the solution to $(h+k)x\equiv 1$ (mod $4$). A solution for this congruence always exists, as $h+k$ is odd and thus invertible mod 4. This solution then is odd, therefor $\ell(h,k)$ is also odd and we find
\begin{align*}
    (h+k)(h'+\ell(h,k) k)= hh'+ (h' + \ell(h,k)(h+k))k \equiv -1 \text{ (mod }4k),
\end{align*}
by our choice of $\ell(h,k).$ 
If $6|k,$ we have $k^*=36k$ and we set
\begin{align*}
    \ell(h,k):= -h'[h+k]_{36}.
\end{align*}
Using a similar argument as before, we find $\ell(h,k)$ to be well-defined and odd. With these considerations we return to the sum in question and get 
\begin{align*}
    &\sum_{\substack{0\leq h <k \\ \text{gcd}(h,k)=1\\(h+k)(h+k)'\equiv -1 \text{ }(k^*) }} \hspace{-.75cm}e^{\frac{2 \pi i}{k} \left(- \frac{n}{2}(h+k)+\frac{m}{2}(h+k)' \right)}
    = \sum_{\substack{0\leq h <k \\ \text{gcd}(h,k)=1\\(h+k)(h+k)'\equiv -1  \text{ }(k^*)}}\hspace{-.75cm} (-1)^n e^{\frac{2 \pi i}{k} \left(- \frac{n}{2}h+\frac{m}{2}(h+k)' \right)}
    \\&= \sum_{\substack{0\leq h <k \\ \text{gcd}(h,k)=1\\hh'\equiv -1 \text{ }(k^*)  }} (-1)^n e^{\frac{2 \pi i}{k} \left(- \frac{n}{2}h+\frac{m}{2}(h'+\ell(h,k) k) \right)}
     =\sum_{\substack{0\leq h <k \\ \text{gcd}(h,k)=1\\hh'\equiv -1 \text{ }(k^*) }} (-1)^{n+\ell(h,k) m} e^{\frac{2 \pi i}{k} \left(- \frac{n}{2}h+\frac{m}{2}h' \right) }
    \\ &= \sum_{\substack{0\leq h <k \\ \text{gcd}(h,k)=1\\hh'\equiv -1 \text{ }(k^*) }} e^{\frac{2 \pi i}{k} \left(- \frac{n}{2}h+\frac{m}{2}h' \right) }
    =K_{k,1}\left( \frac{n}{2}, \frac{m}{2}\right),
\end{align*}
as both $n$ and $m\ell(h,k)$ are odd. This establishes our claim. 
\end{proof}
Next, we show that we can rewrite our sums, as classical Kloosterman sums, where in some cases, we have to shift the modulus in the classical Kloosterman sum from $k$ to $2k.$
\begin{theorem}\label{KMequals}
\textit{For even $k$ and odd integers $m,n$ we have that}
\begin{align*}\label{ThrmKMequal1}
    2K_{k,1}\left(\frac{n}{2}, \frac{m}{2} \right)= K_{k,2}(n,m)= H_{2k}(n,m), \tag{2.23}
\end{align*}
\begin{align*}\label{ThrmKMequal2}
    K_{k,1}(n,m)=H_k(n,m). \tag{2.24}
\end{align*}
\end{theorem}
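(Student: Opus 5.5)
The first equality in (2.23) is Lemma \ref{Modulshift}, so I only need to prove $K_{k,2}(n,m)=H_{2k}(n,m)$ and (2.24). In both cases the plan is the same: show that each summand of $K$ depends on $h'$ only through a residue class that is already fixed by a weaker congruence than $hh'\equiv-1 \pmod{k^*}$. Once that is done, $h'$ can be replaced by any $w$ with $hw\equiv -1$ modulo the relevant classical modulus, and the two sums agree term by term in $h$.

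For (2.24), take $a=1$ and integers $n,m$. The summand $e^{\frac{2\pi i}{k}(-nh+mh')}$ depends only on $h'$ modulo $k$. The condition $hh'\equiv -1 \pmod{k^*}$ implies $hh'\equiv -1 \pmod k$, so $h'\equiv w\pmod k$, where $w$ is the inverse appearing in $H_k(n,m)$. I also have to check that for every $h$ coprime to $k$ such an $h'$ exists. This holds because $h$ is coprime to $k^*$: when $\gcd(k,6)=2$ we have $k^*=4k$, which has the same prime divisors as $k$, and when $6\mid k$ we have $k^*=36k$, again with the same primes. The index sets $\{0\le h<k,\ \gcd(h,k)=1\}$ therefore coincide. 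Moreover, different admissible choices of $h'$ differ by multiples of $k^*$, which are multiples of $k$, so the summand is well defined. This gives $K_{k,1}(n,m)=H_k(n,m)$.

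For $K_{k,2}(n,m)=H_{2k}(n,m)$, the summands are $e^{\frac{2\pi i}{2k}(-nh+mh')}$ with $0\le h<2k$ and $\gcd(h,2k)=1$. These depend on $h'$ modulo $2k$. Since $k$ is even, $2k\mid 4k\mid k^*$ in both cases, so $hh'\equiv-1\pmod{k^*}$ forces $hh'\equiv -1\pmod{2k}$. Hence $h'\equiv w\pmod{2k}$ for the $w$ in $H_{2k}(n,m)$. Existence and well-definedness follow as before, because $\gcd(h,2k)=1$ is equivalent to $\gcd(h,k^*)=1$, all of $k,2k,k^*$ having the same prime divisors. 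The two index sets agree exactly, both being $0\le h<2k$ with $h$ coprime to $2k$, so $K_{k,2}(n,m)=H_{2k}(n,m)$.

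The only real point to watch, and the main place something could go wrong, is the well-definedness of $h'$ in the definition of $K$. The sum in Definition \ref{MyKMSums} runs over $h$, with $h'$ any solution of the congruence modulo $k^*$. I must confirm that the exponent is invariant under $h'\mapsto h'+k^*$, which holds since $ak\mid k^*$ for $a\in\{1,2\}$ with $k$ even. I must also confirm that no extra coprimality condition is lost, which holds since $\gcd(h,ak)=1\iff\gcd(h,k^*)=1$. With these two checks the identities are simply term-by-term equalities, and combining them with Lemma \ref{Modulshift} gives (2.23).
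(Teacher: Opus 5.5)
Your proof is correct and follows essentially the same route as the paper. Both arguments rest on the observation that the summand depends on $h'$ only modulo $ak$, and that $hh'\equiv -1 \pmod{k^*}$ with $ak\mid k^*$ forces $h'$ into the residue class of the classical inverse $w$; the paper phrases the converse direction as lifting $w$ by a multiple of $2k$ to a solution modulo $k^*$, while your explicit check that $\gcd(h,ak)=1$ is equivalent to $\gcd(h,k^*)=1$ (so that an admissible $h'$ exists for every $h$) is a detail the paper leaves implicit.
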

\begin{proof}
    The first equality of (\ref{ThrmKMequal1}) is the result of Lemma  \ref{Modulshift}. For the second equality of (\ref{ThrmKMequal1}), we have 
    \begin{align*}
    K_{k,2}\left(n,m \right) &= \sum_{\substack{0\leq h <2k \\ \text{gcd}(h,2k)=1 \\ hh'\equiv -1  \text{ }(k^*)}} e^{\frac{2\pi i}{2k} (-nh+mh')}
    \\&= \sum_{\substack{0\leq h <2k \\ \text{gcd}(h,2k)=1 \\ hw\equiv -1 \text{ }(2k)}} e^{\frac{2\pi i}{2k} (-nh+mw)} =H_{2k}(n,m).
\end{align*}
Note that in the calculations we changed from $hh'\equiv -1$ (mod $k^*$) to $hw\equiv -1$ (mod $2k$). To show that this does not change the sum in question, we need the following considerations: Firstly, any $h'$ that satisfies $hh'\equiv -1 $ (mod $k^*$), also satisfies the congruence mod $2k.$  Conversely, if we have a $w$ such that $hw\equiv -1$ (mod $2k),$ we find that either
\begin{align*}
    hw\equiv 2ku(w,k) -1  \text{ (mod } k^*),
\end{align*}
for $u(w,k)\in \{0,1\}$ if $k^*=4k$ or $u(w,k)\in\{0,...,17\},$ if $k^*=36k.$ 
If $u(w,k)=0$ then there is nothing to do in either case. Otherwise we modify $w$ by subtracting $2ku(w,k)[h]_{k^*}$ from it. This gives
\begin{align*}
    h \left(w-2ku(w,k)[h]_{k^*} \right) = hw -2ku(w,k)h[h]_{k^*} \equiv -1 \text{ (mod }k^*).
\end{align*}
This modification however does not change the sum, as $e^{\frac{2\pi i}{k} \left(2ku(w,k)[h]_{k^*} \right)}=1,$ as the exponent is an integer multiple of $2\pi i.$ An analogous argument gives (\ref{ThrmKMequal2}). 
\end{proof}
With this, we get bounds for our Kloosterman like sums from Definition \ref{MyKMSums}. 
\begin{corollary}\label{halfbounds}
    \textit{Let $N \in \mathbb{N},$  $0<k\leq N$ be an even integer,  $m,n$ odd integers, $a,b \in \mathbb{Z}$ and $N+1\leq \ell \leq N+k-1$. Furthermore, let $k_1$ be the denominator of the fraction proceeding $\frac{h}{k}$ in the Farey sequence of order $N$ and $\varepsilon >0,$ then} 
    \begin{gather*}\label{meinebounds}
       K_{k,2}(a,b)= O_\varepsilon\left(k^{\frac{2}{3}+\varepsilon} \text{gcd}(|a|,k)^\frac{1}{3} \right), \tag{2.25}
   \end{gather*}
   \begin{gather*}\label{meinebounds2}
       K_{k,1}\left(\frac{n}{2},\frac{m}{2} \right)= O_\varepsilon\left(k^{\frac{2}{3}+\varepsilon} \text{gcd}(|n|,k)^\frac{1}{3} \right), \tag{2.26}
     \end{gather*}
    \begin{align*}\label{meineBounds3}
        \mathbb{K}_{k,\ell}\left(\frac{n}{2},\frac{m}{2} \right)=O_\varepsilon\left(k^{\frac{2}{3}+\varepsilon} \text{gcd}(|n|,k)^\frac{1}{3} \right) . \tag{2.27}
    \end{align*}
\end{corollary}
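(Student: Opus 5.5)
The plan is to transfer everything to classical Kloosterman sums via Theorem \ref{KMequals} and then invoke Lemma \ref{lemmaBoundsKM}.

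\textbf{Bound (2.25).} The argument of Theorem \ref{KMequals} (replacing $h'$ with $hh'\equiv -1 \pmod{k^*}$ by any $w$ with $hw\equiv -1 \pmod{2k}$) never uses parity of the arguments once they are integers. It therefore gives $K_{k,2}(a,b)=H_{2k}(a,b)$ for all $a,b\in\mathbb{Z}$. Indeed, changing $w$ by a multiple of $2k$ does not change $e^{\frac{2\pi i}{2k}bw}$. Applying (2.18) with modulus $2k$ yields
\[
O_\varepsilon\left((2k)^{\frac23+\varepsilon}\gcd(|a|,2k)^{\frac13}\right).
\]
Since $\gcd(|a|,2k)\le 2\gcd(|a|,k)$, this is $O_\varepsilon\left(k^{\frac23+\varepsilon}\gcd(|a|,k)^{\frac13}\right)$; the constants are absorbed.

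\textbf{Bound (2.26).} For odd $n,m$, (2.23) gives $K_{k,1}\left(\frac n2,\frac m2\right)=\frac12 H_{2k}(n,m)$. The bound then follows exactly as for (2.25).

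\textbf{Bound (2.27).} This is the main obstacle, because the Farey condition $N<k+k_1\le \ell$ refers to the modulus $k$, so one cannot simply double the modulus. I would argue directly in the spirit of the proof of Lemma \ref{Modulshift}.

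First, by (2.17), $k_1\equiv h' \pmod k$. The condition $N<k+k_1\le\ell$ is therefore a condition on the residue of $h'$ modulo $k$, namely that it lies in a certain interval. So the truncated sum is the complete sum over $h \bmod k$ restricted to $h'$ lying in an interval $I$ of residues.

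Next, I would apply Estermann's trick, as used for (2.19). Write the indicator of $I$ via a finite Fourier expansion modulo $2k$:
\[
\mathbf 1_I(x)=\frac1{2k}\sum_{j \bmod 2k}\ \sum_{y\in I} e^{\frac{2\pi i j(x-y)}{2k}}.
\]
The inner geometric sums are $O\left(\min\left(k,\|j/2k\|^{-1}\right)\right)$. Summing over $j$ gives a total weight of $O(\log k)$, which is harmless against $k^{\varepsilon}$.

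After inserting this expansion, each term is a complete sum of the form
\[
\sum_{h}e^{\frac{2\pi i}{2k}\left(-nh+(m+2j')h'\right)}.
\]
Here the half-integer phase $e^{\frac{2\pi i}{k}\cdot\frac m2 h'}$ needs care. Using the shift $h'\mapsto h'+\ell(h,k)k$ with $\ell(h,k)$ odd, as in Lemma \ref{Modulshift}, I would check that this phase is well defined on the residue class of $h'$ modulo $2k$. This makes each term a sum $\frac12 H_{2k}(n,m+2j)$ or similar, up to a harmless sign.

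Each such term is $O\left(k^{\frac23+\varepsilon}\gcd(|n|,k)^{\frac13}\right)$ by (2.18). Crucially, the gcd depends only on the $h$-coefficient $n$, which is unchanged by the expansion. Combining this with the $O(\log k)$ total weight gives (2.27).

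The delicate point is making the Fourier expansion compatible with the $k^*$-normalization of $h'$. I would handle it by working entirely with $w$ modulo $2k$ and fixing the interval condition via $w\equiv k_1 \pmod k$.
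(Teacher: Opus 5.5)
Your proposal is correct and follows the paper's own route. You get (2.25) from $K_{k,2}(a,b)=H_{2k}(a,b)$ together with Lemma \ref{lemmaBoundsKM}, and (2.26) via Lemma \ref{Modulshift}. For (2.27) you use Estermann's trick: you Fourier-expand the Farey indicator in the $h'$-variable with $O(\log k)$ total weight, which reduces to complete sums $K_{k,1}\left(\frac n2,\frac{m+2r}{2}\right)$ with $m+2r$ odd, and (2.26) bounds these uniformly in the second argument.

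The ``delicate point'' you flag is not actually delicate. The indicator is $k$-periodic, so only integer frequencies $e^{2\pi i r h'/k}$ occur, and these do not depend on which lift of $h'$ is taken; you can therefore cite (2.26) directly. Your bookkeeping $m+2r$ is also the correct one; the paper's write-up has the slip $\frac{m-r}{2}$ at this step.
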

\begin{proof}
   Equation (\ref{meinebounds}) is a direct consequence of (\ref{ThrmKMequal1}) and Lemma \ref{lemmaBoundsKM}. (\ref{meinebounds2}) follows directly from (\ref{meinebounds})  after applying Lemma \ref{Modulshift}. For (\ref{meineBounds3}) we can make use of a trick employed by Estermann in \cite[Section 4, p. 94]{Estermann1929}. As this source is written in german, we outline the basic idea of his method here: We start by defining a function $g(h,k)$ such that
   \begin{align*}
       \sum_{\substack{0\leq h<k\\ \text{gcd}(h,k)=1 \\hh'\equiv -1 \text{ }(k^*) \\ N<k+k_1\leq \ell }} e^{\frac{2\pi i}{k} \left(ah+bh' \right)} = \sum_{\substack{0\leq h<k\\ \text{gcd}(h,k)=1 \\hh'\equiv -1 \text{ }(k^*) }} g(h,k) e^{\frac{2\pi i}{k} \left(ah+bh' \right)},
   \end{align*}
   i.e.,
   \begin{align*}
       g(h,k)= \begin{cases}
           1 & \text{if } k+k_1 \leq \ell,
           \\ 0 & \text{if } k+k_1 > \ell.
       \end{cases}
   \end{align*}
   Next, we set $h_0 := \ell -N$ and find that $0<  h_0 < k.$ With this we define another function  $g_1$ on $\mathbb{Z}$ by
   \begin{align*}
       g_1(x):= \begin{cases}
           1 & \text{if } 0 < x \leq h_0 ,\\
           0 & \text{if } h_0 <x\leq k,
       \end{cases} \text{\hspace{0.5cm} and \hspace{0.5cm} } g_1(x+k)=g_1(x).
   \end{align*}
   By this definition we find that
   \begin{align*}
       g(h,k)=g_1(k+k_1-N)
   \end{align*}
   and by (\ref{k1 mod}) also 
   \begin{align*}
       g(h,k)=g_1(h'-N),
   \end{align*}
   due to the $k$-periodicity of $g_1.$ Next we rewrite $g_1$ as
   \begin{align*}
       g_1(x)= \sum_{r=1}^k a_r e^{\frac{2\pi i rx}{k},}
   \end{align*}
   where  the $a_r$ are given by
   \begin{align*}
       a_r:= \frac{1}{k}\sum_{b=1}^k g_1(b)e^{-\frac{2\pi i r b}{k}}=\frac{1}{k} \sum_{b=1}^{h_0} e^{-\frac{2\pi i r b}{k}} =\begin{cases}
           \frac{h_0}{k} & \text{if } r=k, \\
           \frac{1-e^{-\frac{2\pi i rh_0}{k} }}{k\left( e^{\frac{2\pi ir}{k}}-1 \right)} & \text{if } 1 \leq r\leq k-1.
       \end{cases}
   \end{align*}
   Estermann \cite{Estermann1929} then showed that 
   \begin{align*}
       \sum_{r=1}^k|a_r| < \log(4k).
   \end{align*}
   We can also express $g$ in terms of the $a_r$ by setting $b_r:= a_r e^{-\frac{2\pi i rN}{k}}$
   \begin{align*}
       g(h,k)= \sum_{r=1}^k b_r e^{-\frac{2\pi i r h'}{k}}.
   \end{align*}
   He also showed that the $b_r$ also satisfy
   \begin{align*}
       \sum_{r=1}^k |b_r| < \log(4k).
   \end{align*}
   Combining this, we get
   \begin{align*}
     &\sum_{\substack{0\leq h<k\\ \text{gcd}(h,k)=1 \\hh'\equiv -1 \text{ }(k^*) \\ N<k+k_1\leq \ell }} e^{\frac{2\pi i}{k} \left(ah+bh' \right)} = \sum_{\substack{0\leq h<k\\ \text{gcd}(h,k)=1 \\hh'\equiv -1 \text{ }(k^*) }} g(h,k) e^{\frac{2\pi i}{k} \left(ah+bh' \right)}
   \\&  = \sum_{\substack{0\leq h<k\\ \text{gcd}(h,k)=1 \\hh'\equiv -1 \text{ }(k^*) }} \sum_{r=1}^k b_r   e^{\frac{2\pi i}{k} \left(ah+(b-r)h' \right)} = \sum_{r=1}^k b_r \sum_{\substack{0\leq h<k\\ \text{gcd}(h,k)=1 \\hh'\equiv -1 \text{ }(k^*) }}e^{\frac{2\pi i}{k} \left(ah+(b-r)h' \right)} 
   \end{align*}
   and with this
   \begin{align*}
      \left| \mathbb{K}_{k,\ell}\left(\frac{n}{2},\frac{m}{2}\right)\right| &\leq \sum_{r=1}^k |b_r|\cdot  \bigg|\sum_{\substack{0\leq h<k\\ \text{gcd}(h,k)=1 \\hh'\equiv -1 \text{ }(k^*) }}e^{\frac{2\pi i}{k} \left(\frac{nh}{2}+\frac{(m-r)h'}{2} \right)}  \bigg| 
      \\ &= \sum_{r=1}^k |b_r| \cdot \left| K_{k,1}\left(\frac{n}{2},\frac{m-r}{2} \right)\right|
      \\ &\leq \log(4k) O_{\tilde{\varepsilon}}\left(k^{\frac{2}{3}+\tilde{\varepsilon}} \text{gcd}(|n|,k)^\frac{1}{3} \right)= O_\varepsilon\left(k^{\frac{2}{3}+\varepsilon} \text{gcd}(|n|,k)^\frac{1}{3} \right)
   \end{align*}
   by (\ref{meinebounds2}).
\end{proof}
\subsection{Integral bounds and decompositions}
Next, we calculate upper bounds for some integrals, that will occur when applying the circle method in Section 4. We start with defining
\begin{align*}
    \mathcal{I}_{b,k,v}(z):= z e^{\frac{\pi b}{zk}} I_{k,v}(z)=ze^{\frac{\pi b }{zk}}\int_{-\infty}^{\infty} \frac{\exp\left( -\frac{6\pi z x^2}{k} \right)}{\tanh \left( \frac{\pi i (6v+2)}{3k} - \frac{ 2\pi z x}{k}\right)} \text{d}x, \tag{2.28}
\end{align*}
\begin{align*}
    \mathcal{J}_{b,k,v}(z):= z e^{\frac{\pi b}{zk}} J_{k,v}(z)=ze^{\frac{\pi b }{zk}}\int_{-\infty}^{\infty} \frac{\exp\left( -\frac{6\pi z x^2}{k} \right)}{\tanh \left( \frac{\pi i \left(v- \frac{1}{6} \right)}{k} - \frac{ 2\pi z x}{k}\right)} \text{d}x \tag{2.29}
\end{align*}
Now, following the argument of \cite[Lemma 3.1]{Bringmann2011AnEO} we get the following lemma.
\begin{lemma}\label{boundIntI} \textit{The following upper bounds hold:}
\begin{itemize}[leftmargin=0.25in]
    \item[(1)]{\textit{For $b \leq 0,$ it holds that}
\begin{align*}
    \left| \mathcal{I}_{b,k,v}(z)\right| <\!\!<  \left| \frac{\pi(6 v+2)}{3k}  \right|^{-1}.
\end{align*}}
\item[(2)]{\textit{For $b>0$, we have $\mathcal{I}_{b,k,v}(z)= \mathcal{I}_{b,k,v}'(z)+ \varepsilon_{b,k,v}(z), $ where}
\begin{align*}
    \mathcal{I}_{b,k,v}'(z):= \sqrt{ \frac{b}{6}} \int_{-1}^1 \frac{e^{\frac{b\pi}{zk}\left(1-x^2 \right) }}{\tanh\left(  \frac{\pi i (6v+2)}{3k} - \frac{2\pi x\sqrt{b}}{\sqrt6 k}\right)} \text{dx}
\end{align*}
\textit{and}
\begin{align*}
   | \varepsilon_{b,k,v}(z)| <\!\!< \left| \frac{\pi(6v+2)}{3k} \right|^{-1}.
\end{align*}}
\end{itemize}
\textit{Here any implied constant may depend on $b$.}
\end{lemma}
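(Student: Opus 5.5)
The plan is to adapt the argument of \cite[Lemma 3.1]{Bringmann2011AnEO}. Substitute $x\mapsto x\sqrt{k}/\sqrt{z}$, or rather, following that source, write the integral in terms of $w:=\sqrt{6z/k}\,x$ and shift the contour so that the Gaussian becomes real. Concretely, we set
\begin{align*}
I_{k,v}(z)=\sqrt{\frac{k}{6z}}\int_{-\infty}^{\infty}\frac{e^{-\pi w^2}}{\tanh\left(\frac{\pi i(6v+2)}{3k}-\frac{2\pi\sqrt{z}\,w}{\sqrt{6k}}\right)}\,\mathrm{d}w,
\end{align*}
after justifying the rotation of the contour from $\sqrt{z}\mathbb{R}$ back to $\mathbb{R}$. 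This works because $\mathrm{Re}(z)>0$, the integrand decays, and no poles of $1/\tanh$ are crossed, since $\frac{6v+2}{3k}\notin\mathbb{Z}$ keeps the argument away from $\pi i\mathbb{Z}$.

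Next I would use the Fourier/Laplace-type representation $\frac{1}{\tanh(u)}$ paired with the Gaussian. Equivalently, I would write $e^{-\pi w^2}$ as an integral over a dual variable, so that $e^{\frac{\pi b}{zk}}I_{k,v}(z)$ becomes $\frac{1}{z}$ times an integral of the shape
\begin{align*}
\sqrt{\frac{b}{6}}\int_{-\infty}^{\infty}\frac{e^{\frac{\pi b}{zk}(1-x^2)}}{\tanh\left(\frac{\pi i(6v+2)}{3k}-\frac{2\pi x\sqrt{b}}{\sqrt{6}k}\right)}\,\mathrm{d}x .
\end{align*}
Here the dual variable has been rescaled by $\sqrt{b}$, which is where $b>0$ is needed. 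The factor $z$ in $\mathcal{I}_{b,k,v}$ cancels this $1/z$.

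In case (2) one splits the range into $|x|\le1$, which gives exactly $\mathcal{I}'_{b,k,v}(z)$, and $|x|>1$. On $|x|>1$ we have $\mathrm{Re}\left(\frac{b}{zk}\right)(1-x^2)\le0$, because $\mathrm{Re}(1/z)>0$. So the exponential has modulus at most $1$ and decays like $e^{-c(x^2-1)}$ with $c=\pi b\,\mathrm{Re}(1/(zk))$. The denominator is bounded below by a constant times $\min(1,|\frac{\pi(6v+2)}{3k}|)$ up to periodicity, using $|\tanh(a i - y)|\gg \min(|a|,1)$ for $a$ away from $\pi\mathbb{Z}$. Hence the tail $\varepsilon_{b,k,v}$ is $\ll |\frac{\pi(6v+2)}{3k}|^{-1}$. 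The remaining worry is uniformity of the decay in $c$. I would handle it by noting that $\mathrm{Re}(1/(zk))\ge \frac{k N^{-2}}{k^2|z/k|^2}\gg 1$ on the Farey arc, since $|z|^2/k^2\le N^{-4}+\Phi^2\ll N^{-2}k^{-2}$.

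In case (1), $b\le0$, the factor $e^{\frac{\pi b}{zk}}$ already has modulus at most $1$. The same representation then gives the full line integral with $e^{\frac{\pi b}{zk}(1-x^2)}$ replaced by a decaying Gaussian, and the same denominator bound yields the claim. The main obstacle is the middle step: producing the exact identity that turns $z\,e^{\pi b/(zk)}I_{k,v}(z)$ into the $x$-integral. I would first try to follow \cite{Bringmann2011AnEO} verbatim, via the partial fraction expansion of $\coth$ and termwise Gaussian integrals. I would also need to control the distance from $\frac{(6v+2)}{3k}$ to the nearest integer, which is where the stated bound in terms of $|\frac{\pi(6v+2)}{3k}|^{-1}$ comes from for the range $0\le v<k/2$.
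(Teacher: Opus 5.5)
Your bounding steps are the right ones. Once $\mathcal{I}_{b,k,v}(z)$ is written as the full-line integral you display, both parts follow from three facts: the split at $|x|=1$, the bound $\mathrm{Re}\left(\frac{1}{zk}\right)\ge\frac12$ on the Farey arcs, and $|\coth(ic-y)|\le|\sin c|^{-1}$ for real $y$.

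\textbf{The gap.} That identity is the only step with real content, and you never establish it. The route you sketch also points the wrong way. Your substitution $w=\sqrt{6z/k}\,x$ makes the Gaussian real but leaves $\sqrt{z}$ inside the $\tanh$. Neither a dual-variable representation of $e^{-\pi w^2}$ nor the partial fraction expansion of $\coth$ visibly leads to the identity: the former trades the $\tanh$ for a different kernel in the dual variable, and the latter produces a series of error-function terms.

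\textbf{The fix.} The argument of \cite[Lemma 3.1]{Bringmann2011AnEO}, which the paper invokes, rests only on the scaling $t=zx$. Under it:
the prefactor $z$ is absorbed by $\mathrm{d}x=\mathrm{d}t/z$;
the $\tanh$ argument becomes $\frac{\pi i(6v+2)}{3k}-\frac{2\pi t}{k}$, which is free of $z$;
the Gaussian becomes $e^{-\frac{6\pi t^2}{zk}}$.
One then rotates the path $z\mathbb{R}$ back to $\mathbb{R}$. This is legitimate because the zeros of the $\tanh$ lie at $t\in\frac{ik}{2}\left(\frac{6v+2}{3k}+\mathbb{Z}\right)\subset i\mathbb{R}\setminus\{0\}$ (note $3\nmid 6v+2$). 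The sectors between $\mathbb{R}$ and $z\mathbb{R}$ miss $i\mathbb{R}$ since $\mathrm{Re}(z)>0$, and inside them $\mathrm{Re}\left(\frac{t^2}{z}\right)>0$ while $\coth$ stays bounded. Hence
\begin{align*}
\mathcal{I}_{b,k,v}(z)=e^{\frac{\pi b}{zk}}\int_{-\infty}^{\infty}\frac{e^{-\frac{6\pi t^2}{zk}}}{\tanh\left(\frac{\pi i(6v+2)}{3k}-\frac{2\pi t}{k}\right)}\,\mathrm{d}t .
\end{align*}
Part (1) follows at once from $\left|e^{-\frac{6\pi t^2}{zk}}\right|\le e^{-3\pi t^2}$. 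For $b>0$, the substitution $t=\sqrt{b/6}\,x$ yields exactly your displayed integral, and your tail estimate then applies verbatim.

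\textbf{A smaller correction to your last sentence.} Your denominator estimate gives $\left|\sin\frac{\pi(6v+2)}{3k}\right|^{-1}\asymp\left\|\frac{6v+2}{3k}\right\|^{-1}$, where $\|\cdot\|$ is the distance to $\mathbb{Z}$. For $0\le v<\frac{k}{2}$ this is not $\ll\left|\frac{\pi(6v+2)}{3k}\right|^{-1}$ when $v$ is near $\frac{k}{2}$. For $v=\frac{k}{2}-1$ one has $\frac{6v+2}{3k}=1-\frac{4}{3k}$, and, e.g.\ for $b=0$, $z=\frac{k}{N^2}$ and $k\asymp N$, the integral is genuinely of order $k$. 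So the stated form holds only for representatives with $\left|\frac{6v+2}{3k}\right|\le\frac12$ (recall $I_{k,v}$ is $\frac{k}{2}$-periodic in $v$); otherwise you should keep $|\sin|^{-1}$. The paper's inequality $|\sin b|\gg|b|$ for $0<b<\pi$ has the same defect near $\pi$. This is harmless for the later application, since summing either bound over $v$ gives $\ll k\log k$.
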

Instead of a lower bound for $\cosh,$ we need the following lower bound for sinh and $0<b<\pi$:
\begin{align*}
    \left|\sinh\left(a+ib \right) \right| = \left| \sinh(a)\cos(b)+i\cosh(a)\sin(b) \right| \geq |\sin(b)| >\!\!> |b|,
\end{align*}
so 
\begin{align*}
    \left|\sinh\left( \frac{\pi i (6v+2)}{3k} - \frac{2\pi i x}{ak}\right)\right| >\!\!> \left| \frac{\pi(6v+2)}{3k} \right|.
\end{align*}
Once again following the argumentation in \cite[Proposition 3.2]{Bringmann2011AnEO} we get the following proposition:
\begin{proposition}\label{propIntI}
\textit{We have for $b>0$ and $n \in \mathbb{N}$ that}
\begin{align*}
     \int_{-\vartheta_{h,k}'}^{\vartheta_{h,k}''} e^{\frac{2\pi nz}{k}} \mathcal{I}'_{b,k,v}(z) \text{d}\Phi = \frac{\pi b}{k \sqrt{3n}} \int_{-1}^1  \frac{\sqrt{1-x^2} I_1\left( \frac{2\pi }{k} \sqrt{ 2bn \left(1-x^2 \right) } \right)}{\tanh\left( \frac{\pi i (6v+2)}{3k} - \frac{2\pi x \sqrt{b}}{
     \sqrt{6}k}\right)} \text{d}x + \varepsilon'_{b,k,v}
\end{align*}
\textit{with }
\begin{align*}|\varepsilon_{b,k,v}'| <\!\!< \left| \frac{1}{N\pi(6v+2)} \right|.
\end{align*}
\end{proposition}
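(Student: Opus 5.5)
We follow the strategy of \cite[Proposition 3.2]{Bringmann2011AnEO}. The plan is to interchange the $\Phi$- and $x$-integrations, complete the $\Phi$-integral to a full vertical line, and identify the resulting integral as a Bessel function.

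First, insert the definition of $\mathcal{I}'_{b,k,v}(z)$ from Lemma \ref{boundIntI}. Since the denominator $\tanh\left(\frac{\pi i(6v+2)}{3k}-\frac{2\pi x\sqrt b}{\sqrt6 k}\right)$ does not depend on $\Phi$ and is bounded away from zero (by the $\sinh$ lower bound above), Fubini applies. The left-hand side then becomes
\begin{align*}
\sqrt{\tfrac{b}{6}}\int_{-1}^1 \frac{1}{\tanh\left(\frac{\pi i (6v+2)}{3k}-\frac{2\pi x\sqrt b}{\sqrt6 k}\right)}\int_{-\vartheta'_{h,k}}^{\vartheta''_{h,k}} e^{\frac{2\pi n z}{k}+\frac{b\pi(1-x^2)}{zk}}\,\text{d}\Phi\,\text{d}x .
\end{align*}
Next, substitute $z=k(N^{-2}-i\Phi)$, so that $\text{d}\Phi = \frac{i}{k}\text{d}z$. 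The inner integral runs over the vertical segment with $\operatorname{Re}(z)=k/N^2$ and $\operatorname{Im}(z)$ between $-k\vartheta''_{h,k}$ and $k\vartheta'_{h,k}$.

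We then extend this segment to the full line (and, as in Rademacher's argument, deform it to a circle through $0$ tangent to the imaginary axis). The error consists of the pieces of the path with $|\operatorname{Im} z|\ge \frac{k}{k(k+k_j)}\ge \frac{1}{2N}$ (since $k+k_j\le 2N$), together with the connecting arcs. On these pieces $\operatorname{Re}(1/z)=\operatorname{Re}(z)/|z|^2\ll 1$ and $\operatorname{Re}(z)\ll k/N^2\le 1/N$, so the exponential $e^{\frac{2\pi nz}{k}+\frac{b\pi(1-x^2)}{zk}}$ is $O(1)$ uniformly in $x$. The remaining path has length $\ll 1/N$, and the factor $1/|\tanh|$ is $\ll |\pi(6v+2)/(3k)|^{-1}$. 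Together this gives an error $\ll \frac{1}{N}\left|\frac{3k}{\pi(6v+2)}\right|\cdot\frac1k$, which is the claimed bound on $\varepsilon'_{b,k,v}$.

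For the main term we use the standard identity, valid for $A,B>0$ and $c>0$,
\begin{align*}
\frac{1}{2\pi i}\int_{c-i\infty}^{c+i\infty} t^{-2}e^{At+B/t}\,\text{d}t=\sqrt{A/B}\,I_1\!\left(2\sqrt{AB}\right).
\end{align*}
The integrand here has no $t^{-2}$ factor, so we apply the identity in the variable $t=1/z$, for which $\text{d}z=-t^{-2}\text{d}t$. The contour becomes the line $\operatorname{Re}(t)=N^2/(2k)$. With $A=\frac{b\pi(1-x^2)}{k}$ and $B=\frac{2\pi n}{k}$ we obtain $\sqrt{A/B}=\sqrt{\frac{b(1-x^2)}{2n}}$ and $2\sqrt{AB}=\frac{2\pi}{k}\sqrt{2bn(1-x^2)}$.

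Finally we collect the constants:
\begin{align*}
\sqrt{\tfrac b6}\cdot\frac{i}{k}\cdot 2\pi i\cdot(-1)\cdot\sqrt{\tfrac{b}{2n}}\sqrt{1-x^2} = \frac{2\pi b}{k\sqrt{12 n}}\sqrt{1-x^2}=\frac{\pi b}{k\sqrt{3n}}\sqrt{1-x^2}.
\end{align*}
This matches the stated main term, provided the orientation sign $(-1)$ from reversing the contour is tracked correctly.

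The main obstacle is the error estimate. It must hold uniformly in $x\in[-1,1]$, and it must track the factor $1/\tanh$ via the $\sinh$ lower bound so that the error depends on $v$ only through $|6v+2|^{-1}$. The contour-shift bookkeeping near the endpoints $\vartheta'_{h,k}$ and $\vartheta''_{h,k}$ has to be done exactly as in Rademacher's treatment of $p(n)$.
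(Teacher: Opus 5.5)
Your route is the one the paper intends. The paper simply defers to Bringmann--Mahlburg's Proposition 3.2: Fubini in $x$ and $\Phi$, the Bessel evaluation of Proposition \ref{EZInt} with $r=\frac{b(1-x^2)}{2}$, and the $\sinh$ lower bound. Your main-term computation is correct, including the constant $\frac{\pi b}{k\sqrt{3n}}\sqrt{1-x^2}$ and the orientation sign. The gap is in the contour step, which is the only nontrivial part of the proposition.

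First, the integral of $e^{\frac{2\pi nz}{k}+\frac{b\pi(1-x^2)}{kz}}$ over the full line $\operatorname{Re}(z)=\frac{k}{N^2}$ diverges, since its modulus tends to $e^{2\pi n/N^2}$. So you cannot ``extend the segment to the full line''. The pieces with $|\operatorname{Im}(z)|\ge\frac{1}{2N}$ would have infinite length, which contradicts your claim that the error path has length $\ll\frac1N$.

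Second, $t=1/z$ maps that vertical line to the circle $|t-\frac{N^2}{2k}|=\frac{N^2}{2k}$, not to the line $\operatorname{Re}(t)=\frac{N^2}{2k}$. The $z$-contour that does map to this line is the circle $|z-\frac{k}{N^2}|=\frac{k}{N^2}$. On it $\operatorname{Re}(1/z)=\frac{N^2}{2k}$, so the integrand has modulus about $\exp\bigl(\frac{\pi b(1-x^2)N^2}{2k^2}\bigr)$. No $O(1)$ bound holds on error pieces lying on or leading to that circle. For $k<\frac N2$ it does not even reach the height $\ge\frac{1}{2N}$ of the endpoints. The error pieces have to sit near $z=0$ on a circle where $\operatorname{Re}(1/z)\asymp k$. (A minor point: on the vertical line one only has $\operatorname{Re}(1/z)\le 4k$, not $\ll 1$. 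This is harmless because of the factor $\frac1k$ in the exponent.)

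A correct version uses the image of the Ford circle, $C:\ |z-\frac{1}{2k}|=\frac{1}{2k}$, i.e.\ $\operatorname{Re}(t)=k$. On $C$ the integrand is at most $e^{2\pi n/k^2+\pi b}$.
\begin{itemize}
\item Put $P_1=\frac{k}{N^2}+ik\vartheta'_{h,k}$ and $P_2=\frac{k}{N^2}-ik\vartheta''_{h,k}$. Since $\frac{1}{2N}\le k\vartheta'_{h,k},k\vartheta''_{h,k}\le\frac{1}{N+1}$, you get $|P_j|<\frac{\sqrt2}{N}$ and $\operatorname{Re}(1/P_j)\le 4k$.
\item Join $P_j$ radially to $Q_j:=s_jP_j\in C$, where $s_j=\operatorname{Re}(1/P_j)/k\le 4$. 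Along these rays $\operatorname{Re}(z)\le\frac{4k}{N^2}$ and $\operatorname{Re}(1/z)\le 4k$, so the integrand is $O(1)$. Each ray has length $\le 3|P_j|$.
\item The closed contour formed by the segment, the two rays and the arc of $C$ from $Q_1$ through $\frac1k$ to $Q_2$ lies in $\operatorname{Re}(z)>0$, so Cauchy's theorem applies.
\item Adding the two arcs of $C$ between $0$ and $Q_j$, each of length $\le\frac{\pi}{2}|Q_j|\ll\frac1N$, gives the full clockwise circle. Your $t=1/z$ computation then evaluates this exactly.
\end{itemize}
This gives an error $O(\frac{1}{Nk})$ uniformly in $x\in[-1,1]$. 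That is just Proposition \ref{EZInt} with $r\in[0,\frac b2]$, so quoting it with this uniformity would have sufficed.

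One further caveat, which the paper shares: the bound $\bigl|\tanh\bigl(\frac{\pi i(6v+2)}{3k}-\frac{2\pi x\sqrt b}{\sqrt6 k}\bigr)\bigr|^{-1}\ll\bigl|\frac{\pi(6v+2)}{3k}\bigr|^{-1}$ holds only if $v$ is chosen modulo $\frac k2$ with $|6v+2|\le\frac{3k}{2}$. This choice is legitimate by the $\pi i$-periodicity of $\tanh$. For $v=\frac k2-1$ the left-hand side is of size $k$.
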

Similar results also hold for the integral involving $J_{k,v}(z).$ Analogous to Lemma \ref{boundIntI} and \ref{propIntI} we have:
\begin{lemma} \label{boundIntJ} \textit{The following upper bounds hold:}
\begin{itemize}[leftmargin=0.25in]
    \item[(1)]{\textit{For $b \leq 0,$ it holds that}
\begin{align*}
    \left| \mathcal{J}_{b,k,v}(z)\right| <\!\!<  \left| \frac{\pi \left(v-\frac{1}{6} \right)}{k}  \right|^{-1}.
\end{align*}} 
\item[(2)]{\textit{For $b>0$, we have $\mathcal{J}_{b,k,v}(z)= \mathcal{J}_{b,k,v}'(z)+ \varepsilon_{b,k,v}(z), $ where}
\begin{align*}
    \mathcal{J}_{b,k,v}'(z):= \sqrt{ \frac{b}{6}} \int_{-1}^1 \frac{e^{\frac{b\pi}{zk}\left(1-x^2 \right) }}{\tanh\left(  \frac{\pi i \left(v-\frac{1}{6}\right)}{k} - \frac{2\pi x\sqrt{b}}{\sqrt6 k}\right)} \text{d}x
\end{align*}
\textit{and}
\begin{align*}
   | \varepsilon_{b,k,v}(z)| <\!\!< \left| \frac{\pi \left(v-\frac{1}{6} \right)}{k} \right|^{-1}.
\end{align*}}
\end{itemize}
\textit{Here any implied constant may depend on $b.$}
\end{lemma}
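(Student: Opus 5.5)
The plan is to copy the argument of Lemma \ref{boundIntI}, and thus of \cite[Lemma 3.1]{Bringmann2011AnEO}. The only changes are that the pole parameter $\frac{\pi(6v+2)}{3k}$ becomes $\frac{\pi(v-\frac16)}{k}$, and that the lower bound for $|\sinh|$ now uses $\sin\bigl(\frac{\pi(v-1/6)}{k}\bigr)$.

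First I will reduce to a standard representative of $v$ modulo $k$. Since $v-\frac16\notin\mathbb{Z}$, the quantity $\frac{\pi(v-1/6)}{k}$ is never a multiple of $\pi$. For real $x$ the denominator therefore has no zero, and
\begin{align*}
\left|\sinh\left(\frac{\pi i(v-\frac16)}{k}-\frac{2\pi zx}{k}\right)\right|\geq \left|\sin\left(\frac{\pi(v-\frac16)}{k}\right)\right| >\!\!> \left|\frac{\pi(v-\frac16)}{k}\right|,
\end{align*}
after choosing $v$ so that $\frac{\pi (v-1/6)}{k}$ lies in $(-\pi/2,\pi/2]$ modulo $\pi$. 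This is legitimate because $\tanh$ is $\pi i$-periodic.

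Next, for $b\le 0$, I substitute $x\mapsto x/\sqrt z$, which is justified by shifting the contour. This gives
\begin{align*}
zJ_{k,v}(z)=\sqrt z\int_{\mathbb{R}}\frac{e^{-6\pi x^2/k}}{\tanh\left(\frac{\pi i(v-\frac16)}{k}-\frac{2\pi\sqrt z x}{k}\right)}\,dx .
\end{align*}
I then write $\coth(w)=1+\frac{2}{e^{2w}-1}$. The constant term integrates to something that is absorbed, and the factor $e^{\pi b/(zk)}$ has modulus at most $1$ because $\mathrm{Re}(1/z)>0$. It remains to bound $\frac{1}{\sinh}$, and the lower bound above gives $\left|\frac{\pi(v-1/6)}{k}\right|^{-1}$. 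The Gaussian integral contributes only a factor $|z|^{1/2}\cdot (k/\mathrm{Re}\, z)^{1/2}$-type quantity. On the Farey arcs this is $O(1)$ after the substitution, exactly as in Lemma \ref{boundIntI}, since $|z|\ll k/N$ and $\mathrm{Re}(1/z)\gg k$.

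For $b>0$, I follow \cite[Lemma 3.1]{Bringmann2011AnEO} and write the Gaussian as a Fourier-type integral,
\begin{align*}
e^{-6\pi z x^2/k}=\sqrt{\frac{k}{6z}}\int_{\mathbb{R}}e^{-\pi k t^2/(6z)+2\pi i x t}\,dt .
\end{align*}
I insert this, interchange the integrals, and shift the $x$-contour so that $t$ is paired with $\frac{2\pi\sqrt b x}{\sqrt6 k}$. After rescaling, $ze^{\pi b/(zk)}J_{k,v}$ becomes an integral of $e^{\frac{b\pi}{zk}(1-x^2)}/\tanh(\cdots)$ over $\mathbb{R}$. The piece over $|x|\le 1$ is exactly $\mathcal{J}'_{b,k,v}(z)$. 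On $|x|>1$ the exponential has nonpositive real part, so this tail behaves like the $b\le0$ case and yields $\varepsilon_{b,k,v}(z)\ll\left|\frac{\pi(v-1/6)}{k}\right|^{-1}$ via the same $\sinh$ lower bound.

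The main obstacle is justifying the contour shift: the denominator must not vanish along the shifted path. This is where $v-\frac16\notin\mathbb{Z}$ and the representative choice of $v$ enter. These are also what make the implied constants uniform in $k$ and $v$ and dependent only on $b$.
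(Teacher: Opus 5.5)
\paragraph{Verdict.} There is a genuine gap: the proposal never validly derives the identity that part (2) rests on, and its $\sinh$ lower bound is false as written. The fix is short.

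\paragraph{Part (2) is asserted, not derived.} You insert $e^{-6\pi zx^2/k}=\sqrt{k/(6z)}\int_{\mathbb{R}}e^{-\pi kt^2/(6z)+2\pi ixt}\,dt$ and interchange the integrals. This needs the inner integral $\int_{\mathbb{R}}e^{2\pi ixt}\coth\bigl(\frac{\pi i(v-\frac16)}{k}-\frac{2\pi zx}{k}\bigr)\,dx$, which does not converge because $\coth\to\pm1$, so Fubini is not available. Even formally, this computes a Fourier transform of the $\coth$ factor. That leads to a different Mordell-type integral, not to the $z$-free denominator $\tanh\bigl(\frac{\pi i(v-\frac16)}{k}-\frac{2\pi\sqrt b\,x}{\sqrt6 k}\bigr)$. ``Shift the $x$-contour so that $t$ is paired with $\ldots$'' is not an actual step. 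As a result, $\mathcal{J}_{b,k,v}(z)=\sqrt{b/6}\int_{\mathbb{R}}e^{\frac{b\pi}{zk}(1-x^2)}/\tanh(\cdots)\,dx$ is asserted rather than proved.

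\paragraph{The $\sinh$ lower bound is false as displayed.} The inequality $|\sinh(a+ib)|\ge|\sin b|$ needs $a,b$ real. Here the imaginary part of the argument is $\frac{\pi(v-1/6)}{k}-\frac{2\pi x\,\mathrm{Im}(z)}{k}$, which vanishes at $x=\frac{v-1/6}{2\,\mathrm{Im}(z)}$. At that point $|\sinh|=\bigl|\sinh\bigl(\frac{\pi(v-1/6)}{k}\cdot\frac{\mathrm{Re}(z)}{\mathrm{Im}(z)}\bigr)\bigr|$, and on the Farey arcs $\mathrm{Re}(z)/|\mathrm{Im}(z)|=1/(N^2|\Phi|)$ can be of size $k/N$. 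After your substitution $x\mapsto x/\sqrt z$, a bound $\gg|\frac{\pi(v-1/6)}{k}|$ can be recovered: since $|\arg\sqrt z|<\pi/4$, the real part of the argument dominates the drift of its imaginary part. But that is a separate argument you did not give. Likewise, your Gaussian factor $|z|^{1/2}\sqrt{k}$ is $O(1)$ because $|z|\le k/N^2+1/(N+1)\le 2/N$, not because $|z|\ll k/N$.

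\paragraph{The missing idea.} Substitute $x\mapsto x/z$ instead; this is what the Bringmann--Mahlburg argument invoked by the paper does. Then $zJ_{k,v}(z)$ becomes an integral over the line $z\mathbb{R}$, which can be rotated back to $\mathbb{R}$. This is legitimate because the poles of $\coth\bigl(\frac{\pi i(v-\frac16)}{k}-\frac{2\pi x}{k}\bigr)$ lie on $i\mathbb{R}\setminus\{0\}$, outside the sector between the two lines, and $e^{-6\pi x^2/(zk)}$ decays in that sector. This gives
\begin{align*}
\mathcal{J}_{b,k,v}(z)=e^{\frac{\pi b}{zk}}\int_{\mathbb{R}}\frac{e^{-\frac{6\pi x^2}{zk}}}{\tanh\left(\frac{\pi i\left(v-\frac16\right)}{k}-\frac{2\pi x}{k}\right)}\,dx .
\end{align*}
Here the $x$-term of the $\tanh$ is real, so $|\coth|\le|\sin(\frac{\pi(v-1/6)}{k})|^{-1}\ll|\frac{\pi(v-1/6)}{k}|^{-1}$ holds verbatim for your choice of representative of $v$. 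That choice is correct and is a point the paper's ``$0<b<\pi$'' remark glosses over.

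\paragraph{Finishing both parts.} Part (1) follows from $|e^{\pi b/(zk)}|\le1$ and $\int_{\mathbb{R}}e^{-6\pi x^2\mathrm{Re}(1/z)/k}\,dx\ll1$, using $\mathrm{Re}(1/z)\ge k/2$. For part (2), the substitution $x\mapsto\sqrt{b/6}\,x$ produces exactly the $e^{\frac{b\pi}{zk}(1-x^2)}$ form, and the range $[-1,1]$ gives $\mathcal{J}'_{b,k,v}(z)$. The tail is $\ll_b|\frac{\pi(v-1/6)}{k}|^{-1}\int_{|x|>1}e^{-\frac{b\pi}{2}(x^2-1)}\,dx$. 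This step again needs $\mathrm{Re}(1/z)\ge k/2$, not merely an exponent with nonpositive real part.
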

\begin{proposition}
\textit{We have for $b>0$ and $n \in \mathbb{N}$ that}
\begin{align*}
     \int_{-\vartheta_{h,k}'}^{\vartheta_{h,k}''} e^{\frac{2\pi nz}{k}} \mathcal{J}'_{b,k,v}(z) \text{d}\Phi = \frac{\pi b}{k \sqrt{3n}} \int_{-1}^1  \frac{\sqrt{1-x^2} I_1\left( \frac{2\pi }{k} \sqrt{ 2bn \left(1-x^2 \right) } \right)}{\tanh\left( \frac{\pi i \left(v-\frac{1}{6} \right)}{k} - \frac{2\pi x \sqrt{b}}{
     \sqrt{6}k}\right)} \text{d}x + \varepsilon'_{b,k,v}
\end{align*}
\textit{with}
\begin{align*}
|\varepsilon_{b,k,v}'| <\!\!< \left| \frac{1}{N\pi\left( v-\frac{1}{6}\right)} \right|.
\end{align*}
\end{proposition}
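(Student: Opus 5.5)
We prove the proposition exactly as Proposition \ref{propIntI} was obtained, following \cite[Proposition 3.2]{Bringmann2011AnEO}; the only change is the shift in the hyperbolic tangent.

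\textbf{Step 1: interchange the integrals.} Insert the definition of $\mathcal{J}'_{b,k,v}(z)$ and swap the $\Phi$- and $x$-integrals, which is allowed because everything is continuous on compact sets. For fixed $x\in[-1,1]$ the denominator
\begin{align*}
\tanh\left(\frac{\pi i(v-\frac16)}{k}-\frac{2\pi x\sqrt b}{\sqrt6 k}\right)
\end{align*}
does not depend on $\Phi$. So we only need to evaluate
\begin{align*}
\int_{-\vartheta'_{h,k}}^{\vartheta''_{h,k}} e^{\frac{2\pi n z}{k}+\frac{b\pi(1-x^2)}{zk}}\,\text{d}\Phi .
\end{align*}

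\textbf{Step 2: complete the range and pass to a contour integral.} Write $z=k(N^{-2}-i\Phi)$ and split the range as
\begin{align*}
\int_{-\vartheta'}^{\vartheta''}=\int_{-\infty}^{\infty}-\int_{-\infty}^{-\vartheta'}-\int_{\vartheta''}^{\infty}.
\end{align*}
Substituting $w=z/k$ turns the full-line integral into $\frac{1}{ki}\int_{N^{-2}-i\infty}^{N^{-2}+i\infty} e^{2\pi n w+\frac{b\pi(1-x^2)}{k^2 w}}\,\text{d}w$. The standard inverse Laplace representation of $I_1$ (as used for (1.2)) evaluates this as
\begin{align*}
\frac{2\pi}{k}\sqrt{\frac{b(1-x^2)}{2n}}\; I_1\!\left(\frac{2\pi}{k}\sqrt{2bn(1-x^2)}\right).
\end{align*}
Multiplying by $\sqrt{b/6}$ collects the constants to $\frac{\pi b}{k\sqrt{3n}}\sqrt{1-x^2}$, which is the claimed main term.

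\textbf{Step 3: bound the two tails (main obstacle).} On the tails $|\Phi|\ge \frac{1}{k(N+k)}\ge\frac{1}{2kN}$, so with $\Phi$ the imaginary part,
\begin{align*}
\mathrm{Re}\frac1z=\frac{N^{-2}}{k(N^{-4}+\Phi^2)}\ll k .
\end{align*}
Hence $\bigl|e^{\frac{b\pi(1-x^2)}{zk}}\bigr|\ll 1$, and $\bigl|e^{\frac{2\pi nz}{k}}\bigr|=e^{2\pi n/N^2}$, which is bounded because $N$ will be taken $\to\infty$.

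A bounded integrand alone does not make the infinite tail integrals converge. Following \cite{Bringmann2011AnEO}, we handle this by expanding $e^{\frac{b\pi(1-x^2)}{zk}}=\sum_{m\ge0}\frac{1}{m!}\left(\frac{b\pi(1-x^2)}{zk}\right)^m$. The $m=0$ term integrates against $e^{2\pi n z/k}$ to zero over the full line, since it is $e^{-2\pi i n\Phi}$, so it is discarded before splitting. The terms with $m\ge1$ decay like $|z|^{-m}$, and a tail estimate gives
\begin{align*}
\int_{\vartheta''}^{\infty}\frac{\text{d}\Phi}{|z|^{m}}\ll\frac{(kN)^{m-1}}{k^m}.
\end{align*}
After summing over $m$ (using $\mathrm{Re}(1/z)\ll k$ and $|1/z|\le 2N/k\cdot\ldots$), the tails contribute $\ll N^{-1}$ uniformly in $x$ — this summation is the step to check most carefully.

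\textbf{Step 4: the denominator.} Use the lower bound $|\sinh(a+ib)|\ge|\sin b|\gg|b|$ recorded before Proposition \ref{propIntI}, now with $b=\pi(v-\frac16)/k$. Since $v$ is an integer, $v-\frac16\ne0$, so $1/\tanh$ is bounded by $\ll |\pi(v-\frac16)/k|^{-1}$, with an extra factor coming from $|\cosh|$ that is bounded on $x\in[-1,1]$ after using $k\le N$. Combined with Step 3, this gives the error $\varepsilon'_{b,k,v}\ll |N\pi(v-\frac16)|^{-1}$, as claimed.
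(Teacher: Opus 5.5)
Your Step 1 (swapping the $x$- and $\Phi$-integrals) and Step 4 (bounding $1/\tanh$ through the $\sinh$ lower bound) match what the paper does, and your main-term constant is correct. The gap is in Steps 2--3: the range $[-\vartheta'_{h,k},\vartheta''_{h,k}]$ cannot be completed to $\mathbb{R}$.

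On the line $\mathrm{Re}(z)=k/N^2$ the integrand $e^{\frac{2\pi nz}{k}+\frac{\pi b(1-x^2)}{kz}}$ has modulus tending to $e^{2\pi n/N^2}$ as $|\Phi|\to\infty$. So the full-line integral does not exist, and the representation of $I_1$ in Proposition \ref{EZInt} does not apply to it. That representation lives on a vertical line in $t\propto 1/z$, and it converges only because of the factor $t^{-2}$ coming from $dz$.

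The series expansion does not repair this.
\begin{itemize}
\item The $m=0$ term does not integrate to $0$ over $\mathbb{R}$: $\int_{\mathbb{R}}e^{-2\pi i n\Phi}\,d\Phi$ diverges.
\item For $m=1$ one has $\int_{\vartheta''}^{\infty}|z|^{-1}\,d\Phi=\infty$, so your tail bound fails.
\item For $m\geq 2$ your own bound $(kN)^{m-1}k^{-m}$ grows with $N$. Multiplying by $\frac{1}{m!}(\pi b/k)^m$ and summing gives roughly $\frac{1}{kN}e^{\pi bN/k}$, which for fixed $k$ grows exponentially in $N$ rather than being $O(N^{-1})$.
\end{itemize}
The underlying reason is that $|1/z|$ is of order $N$ on the tails, so a power series in $1/z$ is useless there. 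Also, the error must be uniform in $k\le N$, since it is later summed over all $k\le N$.

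The missing ingredient is Proposition \ref{EZInt}, which is what the Bringmann--Mahlburg argument invoked by the paper rests on. After your Step 1, apply it with $r=\frac{b(1-x^2)}{2}$; the estimate is uniform for $0\le r\le\frac b2$. This gives exactly your main term with error $O(\frac{1}{kN})$ for each $x$, and Step 4 then yields the stated bound on $\varepsilon'_{b,k,v}$.

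Its proof never completes the range:
\begin{itemize}
\item Write $d\Phi=\frac{i}{k}\,dz$. On the segment $\mathrm{Re}(z)=\frac{k}{N^2}$, $-\vartheta'_{h,k}\le\Phi\le\vartheta''_{h,k}$, one has $|z|^2<\frac{2}{N^2}$, so the segment lies inside the circle $\mathrm{Re}(1/z)=\frac k2$ through $0$.
\item Under $t=\frac{2\pi r}{kz}$ this full circle becomes a vertical line, and it produces the Bessel term.
\item The leftover pieces are the arc of the circle to the left of $\mathrm{Re}(z)=\frac{k}{N^2}$, and the parts of that line with $\Phi\notin[-\vartheta'_{h,k},\vartheta''_{h,k}]$ inside the circle. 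They have length $O(\frac1N)$.
\item On these pieces $\mathrm{Re}(z)\le\frac{k}{N^2}$ and $\mathrm{Re}(1/z)\le 4k$, so the integrand is bounded, which gives $O(\frac{1}{kN})$.
\end{itemize}

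One caveat, shared with the paper: Step 4 uses $|\sin\beta|\gg|\beta|$ with $\beta=\pi(v-\frac16)/k$. This needs $\beta$ to stay away from $\pm\pi$, for example by choosing the representative of $v$ with $|v-\frac16|\le\frac k2$.
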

We will also need another proposition stated in the paper of Bringmann and Mahlburg \cite[Proposition 3.3]{Bringmann2011AnEO} (though there is a slight typo in their version, where they send $n \to \infty$ instead of $N$). The proof of this statement goes back to Rademacher and Zuckerman, see \cite{Rademacher1943OnTE} and \cite{Rademacher1938OnTF}.
\begin{proposition}\label{EZInt}
\textit{We have for $r>0$ and as $N \to \infty$}
\begin{align*}
     \int_{-\vartheta_{h,k}'}^{\vartheta_{h,k}''} e^{\frac{2\pi }{k} \left(nz+\frac{r}{z}\right)}  \text{d}\Phi = \frac{2\pi \sqrt{\frac{r}{n}}}{k} I_1 \left( \frac{4\pi \sqrt{nr} }{k} \right) + O\left(  \frac{1}{Nk}\right),
\end{align*}
\textit{where $I_1$ is the modified Bessel function defined by}
\begin{align*}
  \sigma^{-\frac{1}{2}}  I_1(2\sqrt{\sigma}):= \frac{1}{2\pi i} \int_{\gamma -i\infty}^{\gamma +i \infty} t^{-2}e^{\sigma t^{-1} +t} \text{d}t.
\end{align*}
\end{proposition}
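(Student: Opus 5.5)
The last statement is Proposition \ref{EZInt}, the Rademacher--Zuckerman evaluation. The plan is to extend the range of $\Phi$ to a full line, recognize a Bessel integral, and show the extension costs only $O\!\left(\frac{1}{Nk}\right)$.

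\textbf{Step 1: change variables.} Since $z=k(N^{-2}-i\Phi)$, we have $\mathrm{d}\Phi = \frac{i}{k}\,\mathrm{d}z$. The $\Phi$-interval becomes a vertical segment in the $z$-plane on $\mathrm{Re}(z)=k/N^2$, with endpoints $z_\pm = k/N^2 \mp ik\vartheta$. Substituting $t=\frac{2\pi n z}{k}$ gives
\[
\int_{-\vartheta_{h,k}'}^{\vartheta_{h,k}''} e^{\frac{2\pi}{k}\left(nz+\frac{r}{z}\right)}\,\mathrm{d}\Phi=\frac{1}{2\pi n}\cdot\frac{1}{i}\int e^{t+\frac{4\pi^2 nr}{k^2 t}}\,\mathrm{d}t ,
\]
taken along a segment of the vertical line $\mathrm{Re}(t)=\frac{2\pi n}{N^2}$.

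\textbf{Step 2: match to the Bessel kernel.} The defining integral of Proposition \ref{EZInt} has the kernel $t^{-2}$, while ours has none. I would therefore work instead in the variable $w=1/z$. The circle $|z-\frac{k}{2N^2}|=\frac{k}{2N^2}$ maps to the line $\mathrm{Re}(w)=N^2/k$. Then $\mathrm{d}z=-w^{-2}\,\mathrm{d}w$, so the kernel becomes
\[
w^{-2}\,e^{\frac{2\pi r w}{k}+\frac{2\pi n}{k w}} .
\]
With $t=\frac{2\pi r w}{k}$ and $\sigma=\frac{4\pi^2 nr}{k^2}$, this is exactly the Bessel kernel $t^{-2}e^{t+\sigma/t}$. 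Completing the arc to the full line yields
\[
\frac{2\pi}{k}\sqrt{\frac{r}{n}}\; I_1\!\left(\frac{4\pi\sqrt{nr}}{k}\right)
\]
after tracking the constants. This is the classical Rademacher trick.

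\textbf{Step 3: bound the error terms.} The error consists of two pieces.
\begin{itemize}
\item[(a)] The difference between the straight segment $\mathrm{Re}(z)=k/N^2$ and the circular arc through $z_\pm$. On this region $\mathrm{Re}(z)\le k/N^2$ and $\mathrm{Re}(1/z)\le N^2/k$, with $|z|$ bounded below by roughly $k/N^2$ near the endpoints. Using $\frac{k}{N}\ll |z_\pm| \ll \frac{1}{N}$ (from $\vartheta\asymp \frac{1}{kN}$), one gets $|z_\pm|^2\ge k^2/N^2$ up to constants.
\item[(b)] The two tails of the line $\mathrm{Re}(w)=N^2/k$ outside $w_\pm=1/z_\pm$. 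On these tails $|w|\gg N/k$ and $\mathrm{Re}(w)=N^2/k$.
\end{itemize}
Step 3 is the main obstacle. The key observation is that on the circle $\mathrm{Re}(1/z)=N^2/k$, so
\[
\left|e^{\frac{2\pi r}{kz}}\right|=e^{2\pi r N^2/k^2}\le e^{2\pi r}
\]
because $k\le N$. Combined with $|e^{2\pi n z/k}|\le e^{2\pi n/N^2}\ll 1$, the integrand is bounded. It then remains to bound the lengths of the integration paths in the appropriate measure.

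For (a), the region between chord and arc has $\Phi$-length $O(\vartheta)=O\!\left(\frac{1}{kN}\right)$, and the integrand is bounded there. This gives $O\!\left(\frac{1}{kN}\right)$.

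For (b), the tail integral satisfies
\[
\int_{|w|\gg N/k}|w|^{-2}\,\frac{|\mathrm{d}w|}{k}\ll \frac{1}{k}\cdot\frac{k}{N}=\frac{1}{N}.
\]
Care with the Jacobian $\mathrm{d}\Phi=\frac{i}{k}\,\mathrm{d}z$ then gives the stated $O\!\left(\frac{1}{Nk}\right)$.

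Uniformity in $h$ and $k$ follows because only $k\le N$ and the Farey bounds $\frac{1}{k+k_j}\le\frac{1}{N+1}$ were used. The constants depend only on $n$ and $r$.
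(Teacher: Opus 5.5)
\textbf{There is a genuine gap: the error analysis in Step 3 fails because you chose the wrong circle.} Your identification of the main term in Step 2 is fine. The vertical-line integral does not depend on $\gamma>0$, and the constants give $\frac{2\pi}{k}\sqrt{r/n}\,I_1\bigl(\frac{4\pi\sqrt{nr}}{k}\bigr)$.

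Your ``key observation'' is reversed. Since $k\le N$, we have $N^2/k^2\ge 1$, so on $\mathrm{Re}(1/z)=N^2/k$ one gets $|e^{2\pi r/(kz)}|=e^{2\pi rN^2/k^2}\ge e^{2\pi r}$. For $k=1$ this is $e^{2\pi rN^2}$.

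The integrand is also not bounded ``between chord and arc''. Already on the segment itself, at $\Phi=0$, it equals $e^{2\pi n/N^2+2\pi rN^2/k^2}$. The segment integral is of moderate size only because of cancellation.

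The geometry is inconsistent as well:
\begin{itemize}
\item The circle $|z-\frac{k}{2N^2}|=\frac{k}{2N^2}$ has diameter $k/N^2$ and touches the line $\mathrm{Re}(z)=k/N^2$ only at $\Phi=0$.
\item By contrast, $\frac{1}{2N}\le|z_\pm|\le\frac{\sqrt2}{N}$; it is not true that $|z_\pm|\gg k/N$.
\item So this circle has no arc through $z_\pm$.
\item Likewise $w_\pm=1/z_\pm$ do not lie on $\mathrm{Re}(w)=N^2/k$; in fact $\mathrm{Re}(w_\pm)\in[\frac k2,4k]$ and $|w_\pm|\asymp N$.
\end{itemize}
Consequently neither (a) nor (b) can be shown to be $O(\frac{1}{kN})$.

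\textbf{The missing idea.} You need a circle through $0$ on which $\mathrm{Re}(1/z)\asymp k$, i.e.\ one of diameter $\asymp 1/k$, which encloses the part of the segment where the integrand is large. This is what the Rademacher--Zuckerman argument cited by the paper uses. One way to carry it out:

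\begin{itemize}
\item \emph{Split as in (4.3).} On the outer pieces $\frac{1}{k(k+N)}\le|\Phi|\le\vartheta$ you have
\[
\mathrm{Re}(1/z)\le\frac{1}{kN^2\Phi^2}\le\frac{k(N+k)^2}{N^2}\le 4k .
\]
So the integrand is at most $e^{2\pi n/N^2+8\pi r}$ on a set of total length $\le\frac{2}{k(N+1)}$.
\item \emph{Central piece.} Its endpoints $z_\pm=\frac{k}{N^2}\mp\frac{i}{N+k}$ are symmetric. Put $c:=\mathrm{Re}(1/z_\pm)$, which lies in $[\frac k2,4k]$.
\item The circle $\mathrm{Re}(1/z)=c$ passes through $0$, $z_+$ and $z_-$, and contains the open segment. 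By Cauchy's theorem the segment can be replaced by the arc of this circle lying to its right.
\item That arc equals the full circle minus the two arcs joining $z_\pm$ to $0$. The full circle gives exactly your Bessel integral on $\mathrm{Re}(w)=c$.
\item The two small arcs have length $\le\frac{\pi}{2}|z_\pm|\le\frac{\pi}{\sqrt2\,N}$. On them
\[
\bigl|e^{\frac{2\pi}{k}(nz+r/z)}\bigr|\le e^{2\pi n/N^2+2\pi rc/k}\le e^{2\pi n/N^2+8\pi r}.
\]
\end{itemize}
With $\mathrm{d}\Phi=\frac{i}{k}\,\mathrm{d}z$, every error piece is then $O(\frac{1}{kN})$, uniformly in $h$ and $k$.
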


\section{Kloosterman sums and bounds}
In this section, we closely examine the Kloosterman sums that occur when we employ the circle method in the following section. For this, we first have to evaluate the multipliers. Once again, we distinguish between the different cases of gcd$(k,6)$:

\subsection{The case gcd$(k,6)=6$} Recall that in this case, we assume that the congruence  $hh'\equiv -1 $ holds (mod $36k$), i.e., that $36|k'$. We start by defining the following three Kloosterman sums\footnote{Some clarification on the notation: All Kloosterman sums here and throughout are labeled by a three digit number. The first two numbers denote the integral in the circle method to which the sum belongs to. To evaluate these integrals, they will in turn be split into three parts. The third number corresponds to the term in the decomposition of the integral the Kloosterman sums belong to. As the second and third term in the decomposition give rise to Kloosterman sums that can be bound in the same way, the third kind is omitted here. For more details see Section 4.}
\begin{gather*}
      K_k^{[611]}(n,m):=\sum_{\substack{0 \leq h <k \\ (k,h)=1 \\ hh'\equiv -1 \text{ }(36k)}}  \frac{\omega_{h,\frac{k}{6} }\omega_{h,\frac{k}{2}} \omega_{h,k}}{\omega_{h, \frac{k}{3}}} e^{\frac{\pi i \left(1+h'-3h'k'\right)}{2}+ \frac{3\pi i \left(h'-h\right)}{2k}  } e^{\frac{2\pi i}{k} \left(-nh +m h' \right)},
    \\
     K_k^{[621]}(v,n,m):=\sum_{\substack{0 \leq h <k\\ (h,k)=1\\ hh'\equiv -1 \text{ }(36k) } }  \frac{\omega_{h,\frac{k}{6} }\omega_{h,\frac{k}{2}} \omega_{h,k}}{\omega_{h, \frac{k}{3}}} e^{\frac{\pi i \left(1+h'-3h'k'\right)}{2} -\frac{3\pi i h}{2k}+ \frac{2\pi i h' \left( \mu-3\mu^2 \right)}{k}} e^{\frac{2\pi i}{k} \left(-nh+mh' \right)},  \\
   K_k^{[631]}(n,m):=\sum_{\substack{0 \leq h <k\\ (h,k)=1\\ hh'\equiv -1 \text{ }(36k)}}  \frac{\omega_{h,\frac{k}{3}}\omega_{h,\frac{k}{2}}\omega_{h,k}}{\omega_{h,\frac{k}{6}}^3} e^{\frac{2\pi i }{k} \left(-nh+mh' \right)}  
\end{gather*}
and their three incomplete counterparts for $N+1 \leq \ell \leq N+k-1$
\begin{gather*}
         \mathbb{K}_{k,\ell}^{[612]}(n,m):= \sum_{\substack{0\leq h <k \\ \text{gcd}(h,k)=1\\ hh'\equiv -1 \text{ }(36k)\\ N<k+k_1\leq \ell}}   \frac{\omega_{h,\frac{k}{6} }\omega_{h,\frac{k}{2}} \omega_{h,k}}{\omega_{h, \frac{k}{3}}} e^{\frac{\pi i\left(1+h'-3h'k'\right)}{2}+ \frac{3\pi i \left(h'-h\right)}{2k}  } e^{\frac{2\pi i}{k} \left(-nh +m h' \right)},
    \\
    \mathbb{K}_{k,\ell}^{[622]}(v,n,m):=\sum_{\substack{0\leq h <k \\ \text{gcd}(h,k)=1\\ hh'\equiv -1 \text{ }(36k)\\ N<k+k_1\leq \ell}}  \frac{\omega_{h,\frac{k}{6} }\omega_{h,\frac{k}{2}} \omega_{h,k}}{\omega_{h, \frac{k}{3}}} e^{\frac{\pi i \left(1+h'-3h'k'\right)}{2} -\frac{3\pi i h}{2k} + \frac{2\pi i h' \left(\mu-3\mu^2\right)}{k}} e^{\frac{2\pi i}{k} \left(-nh+mh' \right)},
    \\
    \mathbb{K}_{k,\ell}^{[632]}(n,m):=\sum_{\substack{0\leq h <k \\ \text{gcd}(h,k)=1\\ hh'\equiv -1 \text{ }(36k)\\ N<k+k_1\leq \ell}} \frac{\omega_{h,\frac{k}{3}}\omega_{h,\frac{k}{2}}\omega_{h,k}}{\omega_{h,\frac{k}{6}}^3} e^{\frac{2\pi i }{k} \left(-nh+mh' \right)}. 
\end{gather*}
Here and throughout we use the following abbreviation: If the last argument $m$ is equal to $0,$ we simply suppress it and write
\begin{align*}
    K_k^{[xyz]}(n):=K_k^{[xyz]}(n,0) \text{ and }  K_k^{[xyz]}(v,n):=K_k^{[xyz]}(v,n,0)
\end{align*}
\begin{align*}
     \mathbb{K}_{k,\ell}^{[xyz]}(n):=\mathbb{K}^{[xyz]}_{k,\ell}(n,0) \text{ and } \mathbb{K}_{k,\ell}^{[xyz]}(v,n):=\mathbb{K}_{k,\ell}^{[xyz]}(v,n,0).
\end{align*}
Next, we evaluate the multipliers that come up in the Kloosterman sums above:
\begin{lemma} \label{Multsgcd6}\textit{We have}
    \begin{align*}
            \frac{\omega_{h,\frac{k}{6} }\omega_{h,\frac{k}{2}} \omega_{h,k}}{\omega_{h, \frac{k}{3}}}= e^{-\frac{2\pi i}{36k} \left(18k  + h\left(-9-9k-4k^2\right) + h' \left(9-2k^2 \right) \right)},
    \end{align*}
    \begin{align*}
    \frac{\omega_{h,\frac{k}{3}}\omega_{h,\frac{k}{2}}\omega_{h,k}}{\omega_{h,\frac{k}{6}}^3} =e^{- \frac{2\pi i}{18k} \left(h \left(9-2k^2 \right) + h'\left(-9-k^2 \right)  \right)}.
\end{align*}
\end{lemma}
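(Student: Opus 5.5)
Since $6\mid k$, all of $k,\frac k2,\frac k3,\frac k6$ are multiples of... not necessarily even, but $h$ is coprime to $k$ and hence odd. So for every multiplier appearing in Lemma \ref{Multsgcd6} I will use the first branch of (\ref{omega}), the one for odd $h$. The plan is to write each of $\omega_{h,k}$, $\omega_{h,\frac k2}$, $\omega_{h,\frac k3}$, $\omega_{h,\frac k6}$ explicitly and then combine them into a single exponential.

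\emph{Choice of $h'$.} For each modulus $k/d$ with $d\in\{1,2,3,6\}$, the formula for $\omega_{h,k/d}$ needs an inverse $h'_d$ with $hh'_d\equiv-1 \pmod{k/d}$. We assumed $hh'\equiv-1 \pmod{36k}$, so I will take the same $h'$ for all four moduli. This is legitimate because $\omega_{h,k}$ depends only on the class of $h'$ modulo $24k$, which is the standard well-definedness of the Dedekind sum representation. Indeed the exponent $\frac{1}{12}(k-\frac1k)(2h-h'+h^2h')$ is exactly $\pi i\, s(h,k)$ rewritten using $hh'\equiv -1$. The extra congruence modulo $36k$ is what makes $\frac{1+hh'}{k}$ divisible by $36$. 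This divisibility will be used to clear the $\frac1k$-terms.

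\emph{Kronecker symbols.} I collect the factors $\left(\frac{-k/d}{h}\right)$. The product $\left(\frac{-k/6}{h}\right)\left(\frac{-k/2}{h}\right)\left(\frac{-k}{h}\right)\left(\frac{-k/3}{h}\right)$ equals $\left(\frac{k^4/36}{h}\right)=1$. The same holds for the second quotient, since $\omega^{-3}$ contributes the same symbol as $\omega^{1}$ up to squares. So only the exponentials remain.

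\emph{Exponents.} With $K=k/d$, the exponent of $\omega_{h,K}$ is
\begin{align*}
-\pi i\Big(\tfrac14(2-hK-h)+\tfrac1{12}\big(K-\tfrac1K\big)(2h-h'+h^2h')\Big).
\end{align*}
I form the signed sum with coefficients $(1,1,1,-1)$ for $(d=6,2,1,3)$ in the first product, and $(1,1,1,-3)$ for $(d=3,2,1,6)$ in the second. The terms $\frac{1}{K}(2h-h'+h^2h')$ I rewrite using $h^2h'=h(hh'+1)-h$, so that $h(1+hh')/K$ becomes $h\cdot d\,\frac{1+hh'}{k}$, which is a multiple of $36 d h/36$. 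This leaves $\frac{h-h'}{K}$ plus integer-like terms. Next I reduce $h^2h'$ in the $K$-terms as $h\cdot hh'\equiv -h$ modulo the relevant denominators, using $hh'=-1+36k t$. Summing the $\frac{d(h-h')}{k}$ contributions with the weights, and adding the linear-in-$h$, linear-in-$k$ parts, should give the stated polynomials. For the first product these are $18k+h(-9-9k-4k^2)+h'(9-2k^2)$ over $36k$, and for the second $h(9-2k^2)+h'(-9-k^2)$ over $18k$. Since the result only matters modulo $2\pi i$, I will reduce throughout modulo $2$ in the $\pi i$-exponent.

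\emph{Main obstacle.} The difficulty is bookkeeping, namely checking that every discarded piece is really an even integer multiple of $\pi i$. These pieces include terms like $\frac{k}{12}h^2h'$, $\frac{hK}{4}$, and the $36kt$ corrections multiplied by $\frac{K}{12}$. I will handle this by writing $k=6m$ and $hh'=-1+216mt$, expanding each $\omega$ exponent as a polynomial in $h,h',m,t$, and then verifying that the difference from the claimed exponent lies in $2\mathbb Z$. This check uses that $h$ and $h'$ are odd and that $h^2\equiv1\pmod 8$. If a residual sign such as $(-1)^{(h-1)/2}$ appears, I will absorb it via the quadratic character computation from the previous step.
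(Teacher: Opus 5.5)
Your proposal is correct and is the same computation as the paper's: the odd-$h$ branch of (\ref{omega}) for all four multipliers with a common $h'$, combining the exponents, and reducing with $hh'\equiv -1 \pmod{36k}$. Writing $hh'=-1+36kt$, the discarded parts of the two $\pi i$-exponents are $4k^2th-18th$ and $4k^2th+36th$, both even, so no stray sign appears. You also make explicit that the Kronecker symbols multiply to $\left(\frac{(k^2/6)^2}{h}\right)=1$, which the paper leaves implicit.

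One small correction concerns your reason for reusing $h'$ for each modulus $K=k/d$. The point is not dependence modulo $24k$, but that (\ref{omega}) holds for every $h'$ with $hh'\equiv -1 \pmod K$. Indeed, $h'\mapsto h'+K$ changes the exponent by $-\frac{\pi i}{12}(K^2-1)(h^2-1)$, which lies in $2\pi i\mathbb{Z}$ because $24\mid (K^2-1)(h^2-1)$.
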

\begin{proof} 
Recall, we assume that $hh'\equiv -1 $ (mod $36k$). As $k$ is even, $h$ has to be odd and we have
\begin{align*}
    \frac{\omega_{h,\frac{k}{6} }\omega_{h,\frac{k}{2}} \omega_{h,k}}{\omega_{h, \frac{k}{3}}}= e^{-2 \pi i A},
\end{align*}
where 
\begin{align*}
    A &= \frac{1}{36k} \left( 18k + h \left(-18-9hh'-9k-2k^2+2hh'k^2 \right) + h' \left(9-2k^2 \right) \right) \\
    &\equiv \frac{1}{36k} \left(18k  + h\left(-9-9k-4k^2\right) + h' \left(9-2k^2 \right) \right)\text{ (mod } 1),
\end{align*}
as $hh'\equiv -1 \text{ (mod } 36k)$. The calculation for the other quotient is analogous.  
\end{proof}
With this, we now show that the Kloosterman sums above can be rewritten to be in the form of the sums in Definition \ref{lemmaBoundsKM}. Furthermore, we show that their arguments are integer valued. Therefore, the bounds from Lemma \ref{lemmaBoundsKM} also apply. 
\begin{lemma}\label{6boundlemma}\textit{For $\varepsilon>0$ it holds that}
    \begin{align*}
        &K_{k}^{[611]}(n,m),K_{k}^{[621]}(v,n,m),K_{k}^{[631]}(n,m),\\ &\hspace{1cm}\mathbb{K}_{k,\ell}^{[612]}(n,m),\mathbb{K}_{k,\ell}^{[622]}(v,n,m),\mathbb{K}_{k,\ell}^{[632]}(n,m) <\!\!<_\varepsilon n^{\frac{1}{3}} k^{\frac{2}{3}+\varepsilon}.
    \end{align*}
\end{lemma}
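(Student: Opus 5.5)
The plan is to reduce each of the six sums to a sum of the type in Definition \ref{MyKMSums}. The tool is Lemma \ref{Multsgcd6}: it writes every multiplier as $e^{\frac{2\pi i}{ak}(\alpha h+\beta h')}$ times a root of unity that does not depend on $h$. Corollary \ref{halfbounds} (or Theorem \ref{KMequals} together with Lemma \ref{lemmaBoundsKM}) then gives $O_\varepsilon(k^{\frac23+\varepsilon}\gcd(|\tilde n|,k)^{\frac13})$, where $\tilde n$ is the shifted first argument. Finally, $\gcd(|\tilde n|,k)\le |\tilde n| \ll n$ (for $n\ge1$) produces the factor $n^{\frac13}$.

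\textbf{Step 1: collect all exponents into the form $-\tilde n h+\tilde m h'$ over $36k$.}
\begin{itemize}[leftmargin=0.25in]
\item For $K^{[631]}_k$, Lemma \ref{Multsgcd6} gives the exponent $-\frac{2\pi i}{18k}\left(h(9-2k^2)+h'(-9-k^2)\right)$. Adding this to $\frac{2\pi i}{k}(-nh+mh')$ gives
\begin{align*}
\frac{2\pi i}{k}\left(-\left(n+\tfrac{9-2k^2}{18}\right)h+\left(m+\tfrac{9+k^2}{18}\right)h'\right).
\end{align*}
Since $6\mid k$, we have $k^2/18\in 2\mathbb{Z}$. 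So $\tilde n=n+\frac12-\frac{k^2}{9}$ and $\tilde m=m+\frac12+\frac{k^2}{18}$ both lie in $\mathbb{Z}+\frac12$, and $K^{[631]}_k(n,m)=K_{k,1}(\tilde n,\tilde m)$.
\item For $K^{[611]}_k$ and $K^{[621]}_k$, the multiplier contributes the constant $e^{-\pi i}$ together with $h$- and $h'$-coefficients of denominators $36k$. The extra factors must also be absorbed:
\begin{itemize}[leftmargin=0.25in]
\item $e^{\frac{\pi i}{2}(1+h'-3h'k')}$: because $36\mid k'$, the term $3h'k'/2$ is an even integer, so this factor is $i\cdot e^{\pi i h'/2}=i\cdot e^{\frac{2\pi i}{k}\cdot\frac{k}{4}h'}$, which is linear in $h'$.
\item $e^{\frac{3\pi i(h'-h)}{2k}}$: this is linear in $h,h'$ with coefficients $\pm\frac34$ over $k$.
\item $e^{2\pi i h'(\mu-3\mu^2)/k}$ with $\mu=v+\frac12$: here $\mu-3\mu^2=-3v^2-2v-\frac14$, so this is again linear in $h'$ with coefficient in $\frac14\mathbb{Z}$.
\end{itemize}
\end{itemize}

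\textbf{Step 2: check the coefficients land in $\mathbb{Z}$ or $\mathbb{Z}+\frac12$.} Collecting everything, the $h$-coefficient over $k$ is
\begin{align*}
-n+\frac{9+9k+4k^2}{36}-\frac34 = -n+\frac{k}{4}+\frac{k^2}{9}.
\end{align*}
With $6\mid k$, both $k^2/9$ and $k/4$ lie in $\frac12\mathbb{Z}$. The analogous computation for the $h'$-coefficient gives $m+\frac{k}{4}-\frac{k^2}{18}$ plus a $v$-dependent integer. The conditions $hh'\equiv-1\pmod{36k}$ and $36\mid k'$ are exactly what make the non-linear leftovers (terms like $h^2h'$, $h'k'$) reduce modulo $1$ to linear ones, as Lemma \ref{Multsgcd6} already did.

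Each sum is therefore a unimodular constant times $K_{k,1}(\tilde n,\tilde m)$. The pair $(\tilde n,\tilde m)$ lies either in $\mathbb{Z}^2$, in which case we use (\ref{ThrmKMequal2}) and (\ref{lemmaBoundsKM}), or in $(\mathbb{Z}+\frac12)^2$, in which case we use (\ref{meinebounds2}). The incomplete sums $\mathbb{K}^{[6\cdot2]}_{k,\ell}$ are identical after the same rewriting and are handled by (\ref{meineBounds3}), which also covers the integer case via (2.19).

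\textbf{Main obstacle.} The delicate point is parity: both arguments must be simultaneously integral or simultaneously half-integral, depending on $k \bmod 12$. Cases where $\tilde n\in\mathbb{Z}$ but $\tilde m\in\mathbb{Z}+\frac12$, or vice versa, must not occur. If they did, I would split by $k\equiv 0$ or $6\pmod{12}$, and if necessary use the freedom $h'\mapsto h'+36k$ to adjust, or pass to modulus $2k$ through Lemma \ref{Modulshift}.

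The gcd factor also needs care. $\gcd(|2\tilde n|,2k)$ is bounded by $|2n+O(k^2)|$, which is not $\ll n$ in general. I would instead use the trivial bound $\gcd(\cdot,k)^{\frac13}\le \gcd(\cdot,k)^{\frac13}$ with the $n$-part made explicit: $\gcd(n+c_k,k)$ divides $k$ and is at most $|n+c_k|$ when $n+c_k\neq0$. If that is insufficient, I would accept a bound in $\max(n,1)^{\frac13}$, which is all the circle method application requires.
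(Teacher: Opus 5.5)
Your route is the same as the paper's: rewrite the multipliers via Lemma \ref{Multsgcd6}, check that both shifted arguments are simultaneously integral or half-integral, then apply Theorem \ref{KMequals} with Lemma \ref{lemmaBoundsKM}, or Corollary \ref{halfbounds}. However, the step that produces the factor $n^{1/3}$ is missing.

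\textbf{The gap: the factor $n^{1/3}$.} Your claim $\gcd(|\tilde n|,k)\le|\tilde n|\ll n$ is false, since $\tilde n$ contains $-\frac{k^2}{9}$ and so has size $\asymp k^2$. You notice this, but none of your proposed repairs helps: the tautological inequality, the bound by $|n+c_k|$, and ``accepting'' $\max(n,1)^{1/3}$ all give nothing better than $\gcd\le k$. That yields only $k^{1+\varepsilon}$, which is too weak for the estimates in Section 4, where for example $\frac1N\sum_{k\le N}|K_k|/k$ must tend to $0$.

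The missing idea is that the $k$-dependent part of $\tilde n$ vanishes modulo $k$ once the fixed denominator is cleared. For $K^{[611]}_k$ and $K^{[621]}_k$ one has $36\tilde n=36n+18-9k-4k^2$, so in the integral case
\[
\gcd(\tilde n,k)\ \big|\ \gcd(36\tilde n,k)=\gcd(36n+18,k)\le 36n+18 .
\]
In the half-integral case, the quantity entering (\ref{meinebounds2}) is $\gcd(|2\tilde n|,k)$, which also divides $\gcd(36\tilde n,k)$. For $K^{[631]}_k$, use $18\tilde n=18n+9-2k^2\equiv 18n+9\pmod k$. This is how the paper obtains $\ll n^{1/3}$ (for $n\ge 1$). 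The incomplete sums are then handled exactly as you say.

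\textbf{Two smaller corrections.}
\begin{itemize}
\item Since $\frac{9}{36}-\frac34=-\frac12$, your $h$-coefficient should be $-n-\frac12+\frac k4+\frac{k^2}{9}$. The $h'$-coefficient of $K^{[611]}_k$ is $m+\frac12+\frac k4+\frac{k^2}{18}$. Both of your values are off by $\frac12$, which swaps the cases: both arguments are half-integral exactly when $12\mid k$, and integral when $k\equiv 6\pmod{12}$.
\item The parity matching you list as the main obstacle needs no fallback. Because $\frac{k^2}{9},\frac{k^2}{18}\in\mathbb{Z}$ and $\frac k4\in\frac12\mathbb{Z}$, both coefficients are $\equiv\frac12+\frac k4\pmod 1$. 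In $K^{[621]}_k$ the $v$-terms change the $h'$-coefficient only by the integer $-3v^2-2v-1$. So no mixed case occurs, and you should state this rather than defer it.
\end{itemize}
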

\begin{proof}
    Using Lemma \ref{Multsgcd6} and the definition of the Kloosterman sums above, we find 
    \begin{gather*}
        K_k^{[611]}(n,m)= (-1)^{\frac{3}{2}}\sum_{\substack{0 \leq h <k \\ (k,h)=1\\ hh'\equiv -1 \text{ }(36k)}}  e^{ \frac{2\pi i}{k} \left( -\frac{h\left(36n+18-9k-4k^2 \right)}{36} + \frac{h'\left( 36m+18+9k+2k^2\right)}{36} \right) } ,
        \\
        K_k^{[621]}(v,n,m)= (-1)^{\frac{3}{2}} \sum_{\substack{0 \leq h <k \\ (k,h)=1\\ hh'\equiv -1 \text{ }(36k)}} e^{\frac{2\pi i }{k} \left( - \frac{h\left(36n+18-9k-4k^2 \right) }{36} + \frac{h'\left( 36m-18+9k+2k^2-108v^2-72v \right) }{36} \right)} ,
        \\
        K_k^{[631]}(n,m)= \sum_{\substack{0 \leq h <k \\ (k,h)=1\\ hh'\equiv -1 \text{ }(36k)}} e^{\frac{2\pi i }{k} \left( -\frac{h\left(18n+9-2k^2 \right)}{18} + \frac{h' \left(18m+9+k^2 \right)}{18} \right)}.
    \end{gather*}    
Next, we show that the factors that $h$ and $h'$ are multiplied with are either both integral or half integral. In the first case, we can apply Lemma \ref{lemmaBoundsKM} directly to get the desired bounds as in \cite[Lemma 3.3]{bridges2023rademachertype}. In the other case, we can apply Corollary \ref{halfbounds}. 
We start with the first sum:
If $k$ is divisible by $12$ we have
\begin{align*}
    \frac{36n+18-9k-4k^2}{36}\equiv \frac{1}{2} \text{ (mod 1)} \text{ and } \frac{36m+18+9k+2k^2}{36} \equiv \frac{1}{2} \text{ (mod 1)}.
\end{align*} 
If $12 \nmid k,$ we find, as $6|k$ and $\frac{k}{2}$ is odd, that
\begin{align*}
    \frac{36n+18-9k-4k^2}{36}\equiv \frac{1-\frac{k}{2}}{2} \equiv 0 \text{ (mod 1)} \text{ and } \frac{ 36m+18+9k+2k^2}{36} \equiv \frac{1+\frac{k}{2} }{2}  \equiv 0 \text{ (mod 1)}.
\end{align*}
In this case, we get by Theorem  \ref{KMEQUAL} and Lemma \ref{lemmaBoundsKM} that
\begin{align*}
    \left|K_k^{[611]}(n,m) \right| &= \left|H_{k} \left(n+ \frac{18-9k-4k^2}{36} , m+ \frac{18+9k+2k^2}{36}\right) \right| 
    \\&<\!\!<_\varepsilon \text{gcd}\left( \left|n+ \frac{18-9k-4k^2}{36} \right|,k \right)^\frac{1}{3} k^{\frac{2}{3}+ \varepsilon} \
    \leq_\varepsilon n^\frac{1}{3} k^{\frac{2}{3}+\varepsilon}.
\end{align*}
To bound the gcd expression, we used
\begin{align*}
    \text{gcd}\left( n+ \frac{18-9k-4k^2}{36},k \right) &\leq \text{gcd}\left( 36n+18-9k-4k^2,k \right) \\&= \text{gcd}(36n+18,k)\leq 36n+18 \leq_\varepsilon n
\end{align*}
In the case $12|k,$ we have by Definition \ref{MyKMSums},  Lemma \ref{Modulshift} and Corollary \ref{halfbounds} (\ref{meinebounds}) that
\begin{align*}
    \left|K_k^{[611]}(n,m) \right| &= \left|K_{k,1} \left(n+ \frac{18-9k-4k^2}{36} , m+ \frac{18+9k+2k^2}{36}\right) \right| 
    \\&= \frac{1}{2}\left|K_{k,2} \left(2n+ \frac{18-9k-4k^2}{18} , 2m+ \frac{18+9k+2k^2}{18}\right) \right| 
    \\&<\!\!<_\varepsilon \text{gcd}\left( \left|2n+ \frac{18-9k-4k^2}{18}\right| ,2k \right)^\frac{1}{3} (2k)^{\frac{2}{3}+ \varepsilon} \
    \leq (2n)^\frac{1}{3} (2k)^{\frac{2}{3}+\varepsilon}.
\end{align*}
Which in both cases is exactly the claim for $K_k^{[611]}(n,m).$ Arguing as above, we get that $\frac{ 36m-18+9k+2k^2-108v^2-72v  }{36}$,  is an element of $ \mathbb{Z} +\frac{1}{2}$ if $12|k$ and is integral, if $12 \nmid k$.   Lastly, as $6|k,$ we have that $18|k^2,$ therefore both $\frac{18n+9-2k^2}{18}$ and $\frac{18m+9+k^2}{18}$ are elements of $\mathbb{Z}+\frac{1}{2}.$ The calculations for the bounds for the corresponding Kloosterman sums is analogous to the one for $K_k^{[611]}$.
The calculations necessary to establish the bounds for the sums $\mathbb{K}_{k,\ell}$ are also analogous. 
\end{proof}
\subsection{The case gcd$(k,6)=2$} Recall that in this case, we need the congruence $hh'\equiv -1$ to hold (mod $4k$). We also assume that $3|h'.$ We once again start by defining three Kloosterman sums
\begin{gather*}
        K_k^{[211]}(n,m):= \sum_{\substack{0 \leq h <k \\ (k,h)=1 \\ hh'\equiv -1 \text{ }(4k)\\ 3|h'}}   \frac{\omega_{3h,\frac{k}{2}}\omega_{h,\frac{k}{2}}\omega_{h,k}}{\omega_{3h,k}} e^{\frac{\pi i \left( 1+h'-3h'k' \right)}{2}+\frac{3\pi i \left(h'-h\right)}{2k} }e^{ \frac{2\pi i}{k} \left(-nh+\frac{mh'}{3} \right)},\\
        K_k^{[221]}(v,n,m):= \sum_{\substack{0 \leq h <k \\ (k,h)=1\\ hh'\equiv -1 \text{ }(4k)\\ 3|h'}}     \frac{\omega_{3h,\frac{k}{2}}\omega_{h,\frac{k}{2}}\omega_{h,k}}{\omega_{3h,k}} e^{\frac{\pi i\left(1+h'-3h'k' \right)}{2}  - \frac{3 \pi i h}{2k} + \frac{2\pi i h' \left(\mu-3\mu^2 \right)}{k}}
    e^{\frac{2\pi i}{k} \left(-nh+ \frac{mh'}{3} \right) },
        \\
        K_k^{[231]}(n,m):= \sum_{\substack{0 \leq h <k \\ (k,h)=1\\ hh'\equiv -1 \text{ }(4k)\\ 3|h'}} \frac{\omega_{3h,k}\omega_{h,\frac{k}{2}}\omega_{h,k}}{\omega_{3h,\frac{k}{2}}^3} e^{\frac{2\pi i }{k} \left( -nh+\frac{mh'}{3}\right)}
\end{gather*}
and their incomplete counterparts for $N+1 \leq \ell \leq N+k-1$
\begin{gather*}
        \mathbb{K}_{k,\ell}^{[212]}(n,m):= \sum_{\substack{0 \leq h <k \\ (k,h)=1\\ hh'\equiv -1 \text{ }(4k)\\ N<k+k_1\leq \ell\\ 3|h'}}  \frac{\omega_{3h,\frac{k}{2}}\omega_{h,\frac{k}{2}}\omega_{h,k}}{\omega_{3h,k}} e^{\frac{\pi i \left( 1+h'-3h'k' \right)}{2}+\frac{3\pi i\left( h'-h\right)}{2k}}e^{ \frac{2\pi i}{k} \left(-nh+\frac{mh'}{3} \right)},\\
        \mathbb{K}_{k,\ell}^{[222]}(v,n,m):= \sum_{\substack{0 \leq h <k \\ (k,h)=1\\ hh'\equiv -1 \text{ }(4k)\\ N<k+k_1\leq \ell\\ 3|h'}}     \frac{\omega_{3h,\frac{k}{2}}\omega_{h,\frac{k}{2}}\omega_{h,k}}{\omega_{3h,k}} e^{\frac{\pi i\left(1+h'-3h'k' \right)}{2} - \frac{3 \pi i h}{2k} + \frac{2\pi i h' \left(\mu-3\mu^2\right)}{k}}
    e^{\frac{2\pi i}{k} \left(-nh+ \frac{mh'}{3} \right) },
        \\
        \mathbb{K}_{k,\ell}^{[232]}(n,m):= \sum_{\substack{0 \leq h <k \\ (k,h)=1\\ hh'\equiv -1 \text{ }(4k) \\ N<k+k_1\leq \ell\\ 3|h'}} \frac{\omega_{3h,k}\omega_{h,\frac{k}{2}}\omega_{h,k}}{\omega_{3h,\frac{k}{2}}^3} e^{\frac{2\pi i }{k} \left( -nh+\frac{mh'}{3}\right)}.
\end{gather*}
As in the previous section, we first rewrite the multipliers from the Kloosterman sums.
\begin{lemma}\label{multigcd2}
    \textit{It holds that}
\begin{align*}
    \frac{\omega_{3h,\frac{k}{2}}\omega_{h,\frac{k}{2}}\omega_{h,k}}{\omega_{3h,k}}=e^{  \frac{2\pi i}{36k} \left( h \left(9+9k \right) + h'\left(-5+2k^2 \right)  \right)},
\end{align*}
\begin{align*}
    \frac{\omega_{3h,k}\omega_{h,\frac{k}{2}}\omega_{h,k}}{\omega_{3h,\frac{k}{2}}^3} =-e^{ -\frac{2\pi i}{18k} \left(h \left( 9+9k\right) + h'\left(1-k^2 \right)  \right)}.
\end{align*}
\end{lemma}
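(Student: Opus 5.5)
We compute each quotient of multipliers directly from the defining formula (2.3), exactly as in the proof of Lemma \ref{Multsgcd6}. In the case $\gcd(k,6)=2$, $k$ is even, so $h$ is odd and $3h$ is odd as well. Every multiplier therefore uses the first branch of (2.3), i.e. the one with $h$ odd. This includes $\omega_{h,k},\omega_{3h,k}$ and also $\omega_{h,\frac{k}{2}},\omega_{3h,\frac{k}{2}}$: when $\frac{k}{2}$ is odd we could use either branch, and we choose the odd-$h$ branch throughout to keep the computation uniform.

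The first task is to pin down the inverses that enter (2.3). For $\omega_{h,k}$ and $\omega_{h,\frac{k}{2}}$ the inverse of $h$ can be taken to be $h'$, since $hh'\equiv -1 \pmod{4k}$ implies the congruence modulo $k$ and modulo $\frac{k}{2}$. For $\omega_{3h,k}$ and $\omega_{3h,\frac{k}{2}}$ we need an integer $x$ with $3hx\equiv -1$ modulo $k$ (respectively $\frac{k}{2}$). Because we assumed $3\mid h'$, we may take $x=\frac{h'}{3}$. This is exactly the reason for the extra condition $3\mid h'$, and it is what makes $\frac{h'}{3}$ appear in the exponent $\frac{mh'}{3}$ of the sums $K_k^{[2\cdot\cdot]}$.

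Substituting into (2.3) gives two pieces for each multiplier. The first is a Kronecker symbol: $\left(\frac{-k}{h}\right)$, $\left(\frac{-k/2}{h}\right)$, $\left(\frac{-k}{3h}\right)$ or $\left(\frac{-k/2}{3h}\right)$. The second is an exponential whose exponent is linear in $h$ and in the inverse, with coefficients involving $k-\frac1k$ and $h^2h'$. We then form the quotients
\[
\frac{\omega_{3h,\frac{k}{2}}\omega_{h,\frac{k}{2}}\omega_{h,k}}{\omega_{3h,k}}
\quad\text{and}\quad
\frac{\omega_{3h,k}\omega_{h,\frac{k}{2}}\omega_{h,k}}{\omega_{3h,\frac{k}{2}}^3},
\]
handling the Kronecker symbols and the exponentials separately.

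For the Kronecker symbols, multiplicativity gives $\left(\frac{-k}{3h}\right)=\left(\frac{-k}{3}\right)\left(\frac{-k}{h}\right)$. In the first quotient the factors $\left(\frac{\cdot}{h}\right)$ pair up into squares, and what remains is a product of symbols of the form $\left(\frac{\cdot}{3}\right)$. Since $3\nmid k$, the remaining factor $\left(\frac{-k}{3}\right)\left(\frac{-k/2}{3}\right)=\left(\frac{2}{3}\right)=-1$ is a sign depending only on residues mod $3$; it can be absorbed into the exponential as $e^{\pi i}$. In the second quotient an analogous leftover sign produces the explicit leading $-1$ in the statement.

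For the exponentials, we add the exponents and reduce modulo $1$, using $hh'\equiv -1 \pmod{4k}$ to replace $h^2h'$ by $-h$ up to a multiple of the modulus. Since $k'$ does not appear in the exponents of (2.3), it needs no attention. Collecting the coefficients of $h$ and $h'$ over the common denominator $36k$ (respectively $18k$) should yield $h(9+9k)+h'(-5+2k^2)$ and $h(9+9k)+h'(1-k^2)$.

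\textbf{Main obstacle.} The delicate step is the reduction modulo $1$. Terms such as $\frac{1}{12}\left(k-\frac1k\right)h^2h'$ have denominators $12k$ or $6k$ (from the $\frac{k}{2}$ multipliers). Replacing $hh'$ by $-1$ is legitimate only when the error $\frac{(hh'+1)\cdot(\text{integer})}{12k}$ is an integer, which requires the congruence to hold modulo a multiple of $12k$. We only have it modulo $4k$, so the missing factor of $3$ must be supplied by $3\mid h'$ (or $3\mid k^2-1$). Similarly, the pieces $\frac14(2-hk-h)$ contribute parity terms that must be tracked carefully modulo $1$. Before simplifying, I would verify each of these reductions explicitly and, as a check, test the final identities numerically for small $k$ such as $k=2,4,10$.
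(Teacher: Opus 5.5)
Your proposal is correct and follows the same route as the paper, whose proof is just ``analogous to Lemma \ref{Multsgcd6}'': a direct evaluation from (2.3) using the odd-$h$ branch throughout, the inverses $h'$ and $h'/3$, and the Kronecker remainder $\left(\frac{-k}{3}\right)\left(\frac{-k/2}{3}\right)=\left(\frac{2}{3}\right)=-1$; carrying out the summed exponents does give exactly the two stated identities. One correction to your obstacle analysis: (2.3) carries $e^{-\pi i(\cdots)}$, so the reductions must be exact modulo $2$, and the factor $3$ comes for free from $3\mid k^2-1$ and $3\mid k^2-4$, not from $3\mid h'$; the real subtlety is that, writing $hh'+1=4kt$, the substitution errors for $\omega_{h,k}$ and $\omega_{3h,k}$ are $\frac{(k^2-1)ht}{3}$ and $(k^2-1)ht$, which are odd when $t$ is odd, so they cancel only because both multipliers occur exactly once in each quotient (the $\frac{k}{2}$-multipliers give even errors).
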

\begin{proof} 
The proof is analogous to the one of Lemma \ref{Multsgcd6}. 
\end{proof}
We once again closely examine the factors of $h$ and $h'$ of the Kloosterman sums defined above, in order to determine in which case we can use the bounds from Lemma \ref{lemmaBoundsKM} and when we have to resort to Corollary \ref{halfbounds}.
\begin{lemma}\label{2boundlemma}
    \textit{For $\varepsilon>0$ it holds that}
    \begin{align*}
        &K_k^{[211]}(n,m),K_k^{[221]}(v,n,m),K_k^{[231]}(n,m),
        \\ &\hspace{1cm}
        \mathbb{K}_{k,\ell}^{[212]}(n,m),
        \mathbb{K}_{k,\ell}^{[222]}(v,n,m),\mathbb{K}_{k,\ell}^{[232]}(n,m) <\!\!<_\varepsilon n^{\frac{1}{3}} k^{\frac{2}{3}+\varepsilon}.
    \end{align*}
\end{lemma}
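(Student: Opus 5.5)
The plan mirrors the proof of Lemma \ref{6boundlemma}. First, I substitute the multipliers from Lemma \ref{multigcd2} into each of the six sums, together with the extra exponential factors $e^{\frac{\pi i(1+h'-3h'k')}{2}+\frac{3\pi i(h'-h)}{2k}}$ and $e^{\frac{\pi i(1+h'-3h'k')}{2}-\frac{3\pi i h}{2k}+\frac{2\pi i h'(\mu-3\mu^2)}{k}}$. I will simplify the factor $e^{\frac{\pi i(1+h'-3h'k')}{2}}$ using $4|k'$ and $3|h'$. Since $h'$ is odd, $h'\equiv 1$ or $3 \pmod 4$, so this factor is a fourth root of unity. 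The cleanest route is to absorb it into a linear term $e^{\frac{2\pi i}{k}\cdot\frac{k h'}{4}}$, which adds $\frac{k}{4}$ to the coefficient of $h'$. This leaves a constant of modulus one times a sum of the form
\begin{align*}
\sum_{\substack{0\leq h<k\\ (h,k)=1\\ hh'\equiv -1 \text{ }(4k)\\ 3|h'}} e^{\frac{2\pi i}{k}\left(-\alpha h+\beta h'\right)}
\end{align*}
with explicit rational $\alpha=n+c_1(k)$ and $\beta=\frac{m}{3}+c_2(k,v)$. For instance, for $[231]$ we get $\alpha=n+\frac{9+9k}{18}$ and $\beta=\frac{m}{3}+\frac{1-k^2}{18}$.

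The new difficulty compared to the case $6|k$ is the condition $3|h'$ and the denominator $3$ in $\frac{mh'}{3}$. Since $\gcd(k,3)=1$, I will handle both by passing to the modulus $3k$ (or $12k$). Write $h'=3w$ and choose $h'$ via the Chinese remainder theorem, so that the pair $(h,h')$ runs over $hh'\equiv -1 \pmod{4k}$, $h'\equiv 0 \pmod 3$. Then $e^{\frac{2\pi i}{k}\beta h'}=e^{\frac{2\pi i}{3k}\cdot 3\beta h'}$. Equivalently, and probably more simply, I replace $h'$ by $3\bar{3}h'$, where $\bar 3$ is an inverse of $3$ modulo $4k$; the $h'$-term then becomes $\frac{2\pi i}{k}\,(3\beta\bar3)h'$ up to terms that are multiples of $2\pi i$. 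Here $3\beta$ has denominator at most $18/3$, times powers of $2$. In this way the sum is written as a sum of type $K_{k,1}$ from Definition \ref{MyKMSums} with $k^*=4k$.

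Next, I check the parities of the coefficients after clearing the factors $9$ and $18$. Because $\gcd(k,6)=2$, $k/2$ is odd and $k^2\equiv 4 \pmod 8$. Reducing $\frac{9+9k}{18}$, $\frac{-5+2k^2}{36}$ and so on modulo $1$ using $3\nmid k$ and $k\equiv 2 \pmod 4$, I expect both coefficients to lie in $\mathbb{Z}$ or both in $\mathbb{Z}+\frac12$. In the integral case, Theorem \ref{KMequals} (\ref{ThrmKMequal2}) and Lemma \ref{lemmaBoundsKM} apply. In the half-integral case, Lemma \ref{Modulshift} and Corollary \ref{halfbounds} apply. The gcd factor is handled as before: $\gcd(|n+c_1|,k)\leq\gcd(18n+9+9k,k)\leq 18n+9$, which gives $n^{1/3}$. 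The terms with $\mu=v+\frac12$ contribute $\frac{2\pi i h'(\mu-3\mu^2)}{k}=\frac{2\pi i h'}{k}\cdot\frac{-12v^2-8v-1}{4}$, which only shifts $\beta$ by a quarter-integer. Combined with the $\frac{k}{4}$ shift above, this should restore integrality or half-integrality. The incomplete sums $\mathbb{K}^{[2x2]}$ follow verbatim from (\ref{meineBounds3}), i.e.\ from Estermann's trick.

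I expect the main obstacle to be the bookkeeping of the fractional parts of $\alpha$ and $\beta$: the thirds coming from $\frac{m}{3}$ and $\frac{1}{36k}$, and the quarters coming from $\mu$ and from $e^{\pi i h'/2}$. If a denominator $3$ survives in $\beta$, I would first try exploiting $3|h'$, writing $\beta h'=(3\beta)w$ with $w=h'/3$. I would then reparametrize by $w$, which satisfies $h(3w)\equiv -1$, i.e.\ $w\equiv -\bar3\bar h \pmod{4k}$. This turns the sum into a classical Kloosterman sum $H_k(\alpha,-3\beta\bar 3)$ whose second argument is integral, at which point Lemma \ref{lemmaBoundsKM} applies.
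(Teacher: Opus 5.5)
Your strategy is the paper's. You substitute Lemma~\ref{multigcd2}, absorb $e^{\pi i(1+h'-3h'k')/2}=i\,e^{\pi i h'/2}$ into the $h'$-coefficient (this is exactly where the paper's $9k$ comes from), remove $3\mid h'$ by a reparametrization, and then apply Lemma~\ref{lemmaBoundsKM} or Corollary~\ref{halfbounds}, with Estermann's trick for the incomplete sums. The gap is in the parity bookkeeping, which you rightly call the crux. It rests on the false claim that $\gcd(k,6)=2$ forces $k/2$ to be odd; $k=4,8,16,20,\dots$ all have $\gcd(k,6)=2$. The paper splits into $4\nmid k$ and $4\mid k$ because the answer changes. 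Write each sum as $e^{\frac{2\pi i}{k}(-\alpha h+\gamma \frac{h'}{3})}$ with $\gamma=3\beta$. For $[211]$ one gets $\alpha=n+\frac{2-k}{4}$ and $\gamma=m+\frac{22+9k+2k^2}{12}$. For $[221]$, $\alpha$ is the same and $\gamma=m+\frac{-14+9k+2k^2}{12}-9v^2-6v$. These are integers when $k\equiv 2\pmod 4$, but both lie in $\mathbb{Z}+\frac12$ when $4\mid k$. For $[231]$, $\alpha=n+\frac{1+k}{2}$ and $\gamma=m+\frac{k^2-1}{6}$ are always in $\mathbb{Z}+\frac12$; your $\frac{1-k^2}{18}$ has the wrong sign. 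So your check, as set up, never treats the case $4\mid k$.

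Consequently your closing claim is wrong as stated: reparametrizing by $w=h'/3$ does not in general give a classical $H_k$ with integral second argument. In the half-integral cases ($[231]$ always, and $[211]$, $[221]$ when $4\mid k$) you get $K_{k,1}(\alpha,\gamma\bar 3)$ with both arguments in $\mathbb{Z}+\frac12$, since $\bar 3$ is odd. There you must go through Lemma~\ref{Modulshift} and (\ref{meinebounds2}). This is legitimate exactly because the two coefficients are simultaneously half-integral. Since $2\alpha\in\{2n+1-\frac k2,\,2n+1+k\}$ is odd, $\gcd(|2\alpha|,k)$ divides $2n+1$, which gives the factor $n^{1/3}$. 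The step $h'/3\equiv \bar 3 h'\pmod{4k}$ changes the exponent by $2\pi i\cdot 4\gamma j$, so it needs $4\gamma\in\mathbb{Z}$. Your phrase \emph{denominator at most $18/3$ times powers of $2$} does not guarantee this; the explicit values of $\gamma$ above do. With the case split added, your route goes through. You keep $h$ fixed and move $\bar 3$ onto $h'$, instead of the paper's $h\mapsto[3]_{4k}h$, $h'\mapsto 3h'$, which even leaves the first argument, and hence the gcd, untouched. The incomplete sums then follow from Estermann's trick, since it only shifts the $h'$-coefficient by integers.
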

\begin{proof} Using Lemma \ref{multigcd2} and the definition of the Kloosterman sums at the beginning of this subsection, we get 
    \begin{gather*}
        K_k^{[211]}(n,m)= (-1)^{\frac{1}{2}} \sum_{\substack{0 \leq h <k \\ (k,h)=1\\ hh'\equiv -1 \text{ }(4k)\\ 3|h'}}e^{\frac{2\pi i }{k} \left( -\frac{h\left(36n+18-9k\right)}{36} + \frac{\frac{h'}{3} \left(12m+22+9k+2k^2 \right)}{12}\right)},\\
        K_k^{[221]}(v,n,m)= (-1)^\frac{1}{2} \sum_{\substack{0 \leq h <k \\ (k,h)=1\\ hh'\equiv -1 \text{ }(4k)\\ 3|h'}}e^{\frac{2\pi i }{k} \left( -\frac{h\left(36n+18-9k\right)}{36} + \frac{\frac{h'}{3} \left(12m-14+9k+2k^2-108v^2-72v \right)}{12}\right)},
        \\
        K_k^{[231]}(n,m)= -\sum_{\substack{0 \leq h <k \\ (k,h)=1\\ hh'\equiv -1 \text{ }(4k)\\ 3|h'}}e^{\frac{2\pi i }{k} \left( -\frac{h\left(18n+9+9k\right)}{18} + \frac{\frac{h'}{3} \left( 6m-1+k^2 \right)}{6}\right)}.
\end{gather*}
We once again start with the first sum. If $4|k,$ then the factor of $h$ is an element of $\mathbb{Z}+\frac{1}{2}.$ If $k$ is only divisible by $2$ and not by $4,$ we have
\begin{align*}
    \frac{18-9k}{36}= \frac{1-\frac{k}{2}}{2}\equiv 0 \text{ (mod } 1),
\end{align*}
as $\frac{k}{2}$ is odd. For the factor of $\frac{h'}{3}$ the same properties hold:
If $4|k$ we find
\begin{align*}
    \frac{22+9k+2k^2}{12} \equiv \frac{11+k^2}{6}\equiv \frac{1}{2} \text{ (mod }1),
\end{align*}
as $3\nmid k$ and therefore $k^2\equiv 1 $ (mod $3$). For $4 \nmid k$ we have
\begin{align*}
    \frac{22+9k+2k^2}{12} = \frac{11+\frac{9k}{2}+k^2}{6}\equiv 0 \text{ (mod} 1),
\end{align*}
as $\frac{9k}{2}$ is odd, we find that $11+\frac{9k}{2}$ is even and both $\frac{9k}{2}$ and $11+k^2$ are divisible by three,  as $k$ is not divisible by $3$, so $k^2\equiv 1$ (mod 3). For the second sum, we find, keeping in mind that both $108$ and $72$ are dividable by $36$,  that if $k$ is divisible by 4 that both 
\begin{align*}
    \frac{18-9k}{36} \equiv \frac{1}{2} \text{ (mod }1) \text{ and } \frac{-14+9k+2k^2 }{12}\equiv \frac{-7+k^2}{6}\equiv\frac{1}{2} \text{ (mod } 1),
\end{align*}
as $k^2\equiv 1$ (mod $3$). If $4 \nmid k,$ we find
\begin{align*}
    \frac{18-9k}{36}= \frac{1-\frac{k}{2}}{2}\equiv 
    0 \text{ (mod }1) \text{ and } \frac{-14+9k+2k^2}{12}= \frac{-7+\frac{9k}{2}+k^2}{6}\equiv 0 \text{ (mod }1),
\end{align*}
as $-7+\frac{9k}{2}$ is even and $-7+k^2\equiv 0$ (mod $3$).
For the last sum, we have that $\frac{9+9k}{18}$ is an element of $\mathbb{Z}+ \frac{1}{2}.$ Also note that $\frac{-1+k^2}{6}$ is also an element of $\mathbb{Z}+\frac{1}{2},$ as $k^2 \equiv 4$ (mod $6)$, because $k$ belongs to the residue class of either $2$ or $4$ (mod 6).

Next, in order to get ride of the condition $3|h',$ we employ the transformations $h' \mapsto 3h'$ and $h \mapsto [3]_{4k} h,$ where $[3]_{4k}$ is the inverse of $3$ (mod $4k$).
 Now we proceed as in the proof of Lemma \ref{6boundlemma} in order to get the desired bounds. The bounds for $\mathbb{K}_{k,\ell}$ are calculated analogously.
\end{proof} 
\subsection{The case gcd$(k,6)=3$} In this section, we assume that $2|h'$ and $hh'\equiv -1$ (mod $k$). Once again, we define three Kloosterman sums
\begin{gather*}
    K_k^{[311]}(n,m):=(-1)^{\frac{k+1}{2}}\sum_{\substack{0 \leq h <k\\\text{gcd}(h,k)=1\\ hh'\equiv -1 \text{ }(k)\\ 2|h'}} \frac{\omega_{2h,\frac{k}{3}}\omega_{2h,k}\omega_{h,k}}{\omega_{h,\frac{k}{3}}}  e^{\frac{3\pi i hk}{2} -\frac{3\pi i h}{2k}} e^{\frac{2\pi i}{k} \left(-nh+ \frac{mh'}{2} \right)},
    \\ K_k^{[321]}(v,n,m):=(-1)^{\frac{k+1}{2}}\sum_{\substack{0\leq h<k \\ \text{gcd}(h,k)=1\\ hh'\equiv -1 \text{ }(k)\\ 2|h'}} \frac{\omega_{2h,\frac{k}{3}}\omega_{2h,k}\omega_{h,k}}{\omega_{h,\frac{k}{3}}} e^{\frac{3\pi i kh}{2}  +\frac{\pi i \left(h' \left(v-3v^2\right)-3h \right)}{2k}} e^{\frac{2\pi i}{k} \left(-nh+ \frac{mh'}{2} \right)}, 
    \end{gather*}
    \begin{gather*}
     K_k^{[331]}(n,m):=\sum_{\substack{0 \leq h <k\\\text{gcd}(h,k)=1\\ hh'\equiv -1 \text{ }(k)\\ 2|h'}}\frac{\omega_{h,\frac{k}{3}}\omega_{2h,k}\omega_{h,k}}{\omega_{2h,\frac{k}{3}}^3} e^{\frac{2\pi i }{k} \left(-nh+ \frac{mh'}{2} \right)}
\end{gather*}
together with their incomplete counterparts for $N+1 \leq \ell \leq N+k-1$
\begin{gather*}
        \mathbb{K}^{[312]}_{k,\ell}(n,m):=(-1)^{\frac{k+1}{2}}\sum_{\substack{0 \leq h <k\\ \text{gcd}(h,k)=1\\ hh'\equiv -1 \text{ }(k)\\N<k+k_1\leq \ell\\ 2|h'}} \frac{\omega_{2h,\frac{k}{3}}\omega_{2h,k}\omega_{h,k}}{\omega_{h,\frac{k}{3}}}  e^{\frac{3\pi i hk}{2} -\frac{3\pi i h}{2k}} e^{\frac{2\pi i}{k} \left(-nh+ \frac{mh'}{2} \right)},
        \end{gather*}
        \begin{gather*}
        \mathbb{K}_{k,\ell}^{[322]}(v,n,m):=(-1)^{\frac{k+1}{2}}\sum_{\substack{0\leq h<k \\ \text{gcd}(h,k)=1\\ hh'\equiv -1 \text{ }(k) \\ N<k+k_1\leq \ell\\ 2|h'}} \frac{\omega_{2h,\frac{k}{3}}\omega_{2h,k}\omega_{h,k}}{\omega_{h,\frac{k}{3}}} e^{\frac{3\pi i kh}{2}   +\frac{\pi i\left( h' \left(v -3v^2 \right)-3h \right)}{2k}} e^{\frac{2\pi i}{k} \left(-nh+ \frac{mh'}{2} \right)}, \end{gather*} 
        \begin{gather*}
        \mathbb{K}_{k,\ell}^{[332]}(n,m):=  \sum_{\substack{0 \leq h <k\\ \text{gcd}(h,k)=1\\ hh'\equiv -1 \text{ }(k)\\N<k+k_1\leq \ell\\ 2|h'}}\frac{\omega_{h,\frac{k}{3}}\omega_{2h,k}\omega_{h,k}}{\omega_{2h,\frac{k}{3}}^3} e^{\frac{2\pi i }{k} \left(-nh+ \frac{mh'}{2} \right)}.
\end{gather*}
We once again start by evaluating the multipliers.
\begin{lemma}\label{multigcd3}
    \textit{It holds that}
    \begin{align*}
    \frac{\omega_{2h,\frac{k}{3}}\omega_{2h,k}\omega_{h,k}}{\omega_{h,\frac{k}{3}}} =-e^{- \frac{2\pi i}{36k} \left(-9k+9k^2+ h \left(-9+5k^2 \right) + h'\left(-2k^2 \right)  \right)},
\end{align*}
\begin{align*}
    \frac{\omega_{h,\frac{k}{3}}\omega_{2h,k}\omega_{h,k}}{\omega_{2h,\frac{k}{3}}^3} =-e^{- \frac{2\pi i}{18k} \left(3k^2+ h \left(9+k^2 \right) + h'\left(-k^2 \right)  \right)}.
\end{align*}
\end{lemma}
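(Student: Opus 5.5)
The plan is to prove both identities by direct computation from the definition (2.3), in the same way as Lemma \ref{Multsgcd6}. In the case $\gcd(k,6)=3$ the modulus $k$ is odd, and so are $k/3$ and the relevant $h$-arguments. Hence every factor $\omega_{\cdot,k}$ and $\omega_{\cdot,\frac{k}{3}}$ should be evaluated with the second line of (2.3), the "$k$ odd" branch. This produces a Kronecker symbol $\left(\frac{-h}{k}\right)$ or $\left(\frac{-2h}{k}\right)$, or the analogue with $k/3$, times an explicit exponential. The inverses entering the formula must be chosen consistently:
\begin{itemize}
\item for $\omega_{h,k}$ we use $h'$, with $hh'\equiv -1 \pmod k$ and $2\mid h'$;
\item for $\omega_{2h,k}$ we use $h'/2$, which is admissible since $2h\cdot \frac{h'}{2}=hh'\equiv -1 \pmod k$;
\item for $\omega_{h,\frac{k}{3}}$ we use $h'$ again, now reduced modulo $k/3$;
\item for $\omega_{2h,\frac{k}{3}}$ we use $h'/2$.
\end{itemize}
Here I use that $\omega_{h,k}$ depends only on $h' \bmod k$ (resp.\ $k/3$). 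If needed, I would check this directly from (2.3) or cite it from \cite{Andrews1976Theory}.

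Next, I handle the Kronecker symbols. In the first quotient they contribute
\[
\frac{\left(\frac{-2h}{k/3}\right)\left(\frac{-2h}{k}\right)\left(\frac{-h}{k}\right)}{\left(\frac{-h}{k/3}\right)}=\left(\frac{2}{k/3}\right)\left(\frac{2}{k}\right)\left(\frac{-h}{k}\right)^2\cdot(\pm1).
\]
Since $k$ and $k/3$ differ by the factor $3\equiv 3 \pmod 8$, we get $\left(\frac{2}{k}\right)\left(\frac{2}{k/3}\right)=\left(\frac{2}{3}\right)=-1$. This explains the leading minus sign in both formulas. The second quotient works the same way, with the cube in the denominator giving $\left(\frac{2}{k/3}\right)^{3}\left(\frac{2}{k}\right)\cdot(\pm 1)^2$. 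I expect the sign bookkeeping here to be delicate, because the individual $\left(\frac{-h}{k}\right)$ and $\left(\frac{-h}{k/3}\right)$ factors must cancel or square to $1$.

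The exponential parts combine into $e^{-2\pi i A}$ with an explicit rational $A$. The $\frac14(k-1)$ and $\frac14(\frac k3-1)$ terms are collected, and the terms $\frac1{12}\left(k-\frac1k\right)\left(2h-h'+h^2h'\right)$ are collected with the appropriate $h$, $h'$ substitutions. After putting everything over the common denominator $36k$ (resp.\ $18k$), I reduce $A$ modulo $1$. For this I use $hh'\equiv -1 \pmod k$ and $2\mid h'$, which lets me replace $h^2h'$ by $-h$ up to multiples of $k$. The terms then become quadratic expressions such as $(hh'+1)\cdot\frac{k^2-1}{12k}\cdot(\dots)$, and it must be verified that these are integers or else absorbed into the sign.

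The main obstacle is this reduction modulo $1$. The congruence holds only mod $k$, not mod $36k$ as in Lemma \ref{Multsgcd6}, so the error terms $\frac{(hh'+1)(k^2-1)}{36k}$ are not automatically integral. One has to use that $k^2-1$ is divisible by $8$ for odd $k$, and track the factor $3$ in $k$ carefully, with $9\mid k^2$. Leftover half-integers, such as the $-9k+9k^2$ term, should be exactly what produces the stated constants. I would first carry out the computation symbolically in $h,h',k$ and match it against the claimed exponent. After that I would justify each discarded term by these divisibility facts, checking a small case such as $k=3$, $k=9$ numerically as a sanity test.
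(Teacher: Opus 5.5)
Your plan is correct and is essentially the paper's proof. You evaluate all four multipliers with the $k$-odd branch of (2.3) using the inverses $h'$ and $h'/2$, get the sign from $\left(\frac{2}{k}\right)\left(\frac{2}{k/3}\right)=\left(\frac{2}{3}\right)=-1$ (a step the paper leaves implicit), and reduce the exponent modulo $1$ via $hh'\equiv -1 \pmod{k}$. That reduction only succeeds after the four factors are combined: per factor, the error $\frac{(k^2-1)h(hh'+1)}{24k}$ need not be an integer when $3\mid k$, and your sample term $\frac{(hh'+1)(k^2-1)}{36k}$ is not what arises. Combined, the $h\cdot hh'$ terms are $\frac{h\cdot hh'(5k^2-9)}{36k}$, resp.\ $\frac{h\cdot hh'(k^2+9)}{18k}$, and $36\mid 5k^2-9$, $18\mid k^2+9$ because $9\mid k^2$ and $k$ is odd. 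This is exactly the divisibility check the paper makes for the first quotient and calls analogous for the second.
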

\begin{proof} As gcd($k,6)=3$ we have that $k$ is odd and thus we find
\begin{align*}
    \frac{\omega_{2h,\frac{k}{3}}\omega_{2h,k}\omega_{h,k}}{\omega_{h,\frac{k}{3}}} =-e^{- 2\pi i A},
\end{align*}
with
\begin{align*}
    A&= \frac{1}{36k} \left( -9k+9k^2 +h \left( -18-9hh'+10k^2+5hh'k^2 \right) + h'\left(-2k^2 \right)  \right)
    \\ &\equiv \frac{1}{36k} \left(-9k+9k^2+ h \left(-9+5k^2 \right) + h'\left(-2k^2 \right)  \right) \text{ (mod } 1).
\end{align*}
To see this, we recall that $hh'\equiv -1$ (mod $k$) and thus only have to show that $\frac{-9+5k^2}{36}$ is an integer. For  this, recall that $3|k,$ i.e., the fraction reduces to $\frac{-1+5\frac{k^2}{9}}{4}.$ Lastly, recall that $k$ is odd, so that $k^2\equiv 1$ (mod $4)$. The calculation for the other quotient is analogous. 
\end{proof} 
With this, we can now once again determine the bounds for the Kloosterman sums. 
\begin{lemma}\label{3boundlemma}
    \textit{For $\varepsilon>0$ it holds that}
    \begin{align*}
        &K_k^{[311]}(n,m),K_k^{[321]}(v,n,m),K_k^{[331]}(n,m),
        \\ &\hspace{1cm}\mathbb{K}_{k,\ell}^{[312]}(n,m),\mathbb{K}_{k,\ell}^{[322]}(v,n,m),\mathbb{K}_{k,\ell}^{[332]}(n,m) <\!\!<_\varepsilon n^{\frac{1}{3}} k^{\frac{2}{3}+\varepsilon}.
    \end{align*}
\end{lemma}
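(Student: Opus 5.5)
We follow the pattern of Lemmas \ref{6boundlemma} and \ref{2boundlemma}: rewrite each multiplier by Lemma \ref{multigcd3}, absorb all extra phases into the linear coefficients of $h$ and $h'$, remove the side condition $2|h'$ by a change of variables, and then quote Lemma \ref{lemmaBoundsKM}. Since $k$ is odd here, the congruence is only modulo $k$. So we can expect ordinary Kloosterman sums $H_k$, or $H_{2k}$ after clearing denominators, and should not need the modulus-shift machinery of Definition \ref{MyKMSums}.

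First we insert Lemma \ref{multigcd3} into the definitions. For $K_k^{[311]}$ the factor $(-1)^{\frac{k+1}{2}}$, the $-1$ from the multiplier, the constant $e^{-\frac{2\pi i(-9k+9k^2)}{36k}}$ and $e^{\frac{3\pi i hk}{2}}$ all need to be handled. Because $k$ is odd, $e^{\frac{3\pi i hk}{2}}=i^{3hk}$ depends only on $h$ modulo $4$. We therefore write it as $e^{\frac{2\pi i}{4k}\cdot 3hk^2}$, a linear phase in $h$ with denominator dividing $4k$. The remaining phases combine to
\begin{align*}
e^{\frac{2\pi i}{k}\left(-h\left(n+\frac{-9+5k^2}{36}+\frac{3}{4}-\frac{3k^2}{4}\right)+\frac{h'}{2}\left(m+\frac{k^2}{9}\right)\right)}.
\end{align*}
We then check, as in the proof of Lemma \ref{multigcd3} (using $3|k$ and $k^2\equiv 1 \pmod 4$), that the coefficient of $h$ is an integer or a half integer. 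The term $\frac{k^2}{9}$ is an integer, and the $v$-dependent term in $K^{[321]}$ is $\frac{h'(v-3v^2)}{4k}$, i.e.\ an extra $\frac{h'}{2}\cdot\frac{v-3v^2}{2}$ where $v-3v^2$ is even, so it is integral. The same bookkeeping works for $K^{[331]}$ with denominator $18$.

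Next we remove the condition $2|h'$. We write $h'=2w$, so that $h(2w)\equiv -1 \pmod k$. Putting $\tilde h=2h$, which is invertible modulo $k$ since $k$ is odd, the map $h\mapsto \tilde h$ permutes the reduced residues modulo $k$ and gives $\tilde h w\equiv -1 \pmod k$. The phase becomes $e^{\frac{2\pi i}{k}(-\alpha [2]_k\tilde h+\beta w)}$ with $\alpha,\beta\in\frac12\mathbb{Z}$, where $[2]_k=\frac{k+1}{2}$. If half integers remain, we rewrite modulo $2k$ as in Theorem \ref{KMequals}; for odd $k$ the reduction is elementary via the Chinese remainder theorem, since a sum modulo $2k$ factors into one modulo $2$ times one modulo $k$. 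The sums $K_k^{[3j1]}$ are then complete Kloosterman sums $H_k(n^*,m^*)$ or $H_{2k}(n^*,m^*)$ up to a unimodular constant, with $n^*=O(n)$ after clearing denominators. Lemma \ref{lemmaBoundsKM} then gives $O_\varepsilon(k^{\frac23+\varepsilon}\gcd(|n^*|,k)^{\frac13})$, and $\gcd(|n^*|,k)\le 36n+O(1)\ll n$ as in Lemma \ref{6boundlemma}.

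For the incomplete sums $\mathbb{K}^{[3j2]}_{k,\ell}$ we use Estermann's trick from the proof of Corollary \ref{halfbounds}. We expand the indicator of $N<k+k_1\le\ell$ as $\sum_r b_r e^{-\frac{2\pi i r h'}{k}}$ using (\ref{k1 mod}), with $\sum_r|b_r|<\log(4k)$, and this reduces the incomplete sums to complete ones with shifted second argument. The bound depends only on the first argument, so it is uniform in the shift. The main obstacle is the bookkeeping in the first step: making sure that $e^{\frac{3\pi i hk}{2}}$ and the constant terms genuinely become a linear phase in $h$ with the right denominator, so that Lemma \ref{lemmaBoundsKM} applies. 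If $i^{3hk}$ does not fit a modulus compatible with $k$, the fallback is to split $h$ by residue classes modulo $4$ and apply the bound to each piece. This loses only a constant factor.
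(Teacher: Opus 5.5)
Your proposal is correct and follows the paper's proof. After inserting Lemma \ref{multigcd3}, your $h$-coefficient $n+\frac{-9+5k^2}{36}+\frac{3(1-k^2)}{4}$ is exactly the paper's $\frac{36n+18-22k^2}{36}$, and it is an integer by the two facts you cite; together with the integrality of $m+\frac{k^2}{9}$ and $\frac{v-3v^2}{2}$, the substitution $h'\mapsto 2h'$, $h\mapsto[2]_k h$ gives classical sums $H_k$, Lemma \ref{lemmaBoundsKM} applies, and Estermann's trick handles the incomplete sums, just as in the paper. The half-integer branch and the mod-$4$ splitting are therefore never needed, which matters because neither works as stated for odd $k$: a half-integral $h$-coefficient on $0\le h<k$, or a residue condition on $h$ modulo $4$, does not give a complete sum modulo $k$ or $2k$.
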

\begin{proof}
    Using Lemma \ref{multigcd3} we get  
\begin{gather*}
    K_k^{[311]}(n,m)=\sum_{\substack{0 \leq h <k\\\text{gcd}(h,k)=1\\ hh'\equiv -1 \text{ }(k)\\ 2|h'}}    e^{\frac{2\pi i}{k} \left(-\frac{h\left(36n+18-22k^2 \right)}{36}+ \frac{\frac{h' }{2}\left(18m+2k^2\right)}{18} \right)},
    \\ K_k^{[321]}(v,n,m)=\sum_{\substack{0\leq h<k \\ \text{gcd}(h,k)=1\\ hh'\equiv -1 \text{ }(k)\\ 2|h'}}  e^{\frac{2\pi i}{k} \left(-\frac{h\left(36n+18-22k^2 \right)}{36}+ \frac{\frac{h'}{2} \left(18m+2k^2-27v^2+9v \right)}{18} \right)},     
    \\ K_k^{[331]}(n,m)= -e^{- \frac{\pi i k}{3}}\sum_{\substack{0 \leq h <k\\\text{gcd}(h,k)=1\\ hh'\equiv -1 \text{ }(k)\\ 2|h'}}e^{\frac{2\pi i }{k} \left(-\frac{h \left(18n+9+k^2\right)}{18} + \frac{ \frac{h'}{2}\left(9m+k^2 \right)}{9} \right)}.
\end{gather*}
As before, we show that the factors with which $h$ and $\frac{h'}{2}$ are multiplied are integers. We start with
\begin{align*}
    \frac{18-22k^2}{36}=\frac{1-11\frac{k^2}{9}}{2}\equiv 0 \text{ (mod } 1),
\end{align*}
as $k$ is divisible by $3$ and odd. Obviously $\frac{2k^2}{18}=\frac{k^2}{9}$ is integral, as $3|k.$ Recall that $k$ is odd, therefore $\frac{9+k^2}{18},$ is integral as well.  Lastly, 
\begin{align*}
    \frac{-27v^2+9v}{18}= \frac{-3v^2+v}{2} \equiv 0 \text{ (mod }1),
\end{align*}
because if $v$ is even, both terms in the numerator are even and if $v$ is odd, their difference is even. To get rid of the condition $2|h'$, we proceed as in the proof of Lemma \ref{2boundlemma}. Next one uses \ref{ThrmKMequal2} and applies Lemma \ref{lemmaBoundsKM}. The calculations for the $\mathbb{K}_{k,\ell}$ are once again analogous. 
\end{proof}
\subsection{The case gcd$(k,6)=1$} Besides the congruence $hh'\equiv -1 $ (mod $k)$, we also assume that $6|h'.$ We start by defining the Kloosterman sums:
\begin{gather*}
    K_k^{[111]}(n,m):= (-1)^{\frac{k+1}{2}}\sum_{\substack{0 \leq h <k\\\text{gcd}(h,k)=1\\ hh'\equiv -1 \text{ }(k)\\6|h'}}\frac{\omega_{6h,k}\omega_{2h,k}\omega_{h,k}}{\omega_{3h,k}} e^{\frac{3\pi i h k}{2} -\frac{3\pi i h}{2k}}e^{\frac{2\pi i }{k} \left(-nh + \frac{mh'}{6}\right)}, 
    \\
     K_k^{[121]}(v,n,m):= (-1)^{\frac{1}{2} (k+1)}\sum_{\substack{0 \leq h <k\\\text{gcd}(h,k)=1\\ hh'\equiv -1 \text{ }(k)\\6|h'}}  \frac{\omega_{6h,k}\omega_{2h,k}\omega_{h,k}}{\omega_{3h,k}}  e^{\frac{3\pi i h k}{2}+\frac{\pi i \left( h' \left( v-3v^2  \right)-3h\right)}{2k}} e^{\frac{2\pi i}{k} \left(-nh + \frac{mh'}{6}  \right)},
    \\
    K_k^{[131]}(n,m):= \sum_{\substack{0 \leq h <k\\\text{gcd}(h,k)=1\\ hh'\equiv -1 \text{ }(k)\\6|h'}} \frac{\omega_{3h,k}\omega_{2h,k}\omega_{h,k}}{\omega_{6h,k}^3}  e^{\frac{2\pi i }{k} \left(-nh + \frac{mh'}{6} \right)}
\end{gather*}
and their incomplete version for $N+1 \leq \ell \leq N+k-1$
\begin{gather*}
    \mathbb{K}_{k,\ell}^{[112]}(n,m):=(-1)^{\frac{k+1}{2}} \sum_{\substack{0 \leq h <k\\\text{gcd}(h,k)=1\\ hh'\equiv -1 \text{ }(k)\\N<k+k_1\leq \ell\\6|h'}}\frac{\omega_{6h,k}\omega_{2h,k}\omega_{h,k}} {\omega_{3h,k}} e^{\frac{3\pi i h k}{2} -\frac{3\pi i h}{2k}}e^{\frac{2\pi i }{k} \left(-nh + \frac{mh'}{6}\right)}, 
    \\
     \mathbb{K}_{k,\ell}^{[122]}(v,n,m):= (-1)^{\frac{1}{2} (k+1)}\sum_{\substack{0 \leq h <k\\\text{gcd}(h,k)=1\\ hh'\equiv -1 \text{ }(k)\\N<k+k_1\leq \ell\\6|h'}} \frac{\omega_{6h,k}\omega_{2h,k}\omega_{h,k}}{\omega_{3h,k}}  e^{\frac{3\pi i h k}{2}+\frac{\pi i \left( h'\left(v-3v^2 \right) -3h\right)}{2k}} e^{\frac{2\pi i}{k} \left(-nh + \frac{mh'}{6}  \right)},
    \\
    \mathbb{K}_{k,\ell}^{[132]}(n,m):= \sum_{\substack{0 \leq h <k\\\text{gcd}(h,k)=1\\ hh'\equiv -1 \text{ }(k)\\N<k+k_1\leq \ell\\6|h'}} \frac{\omega_{3h,k}\omega_{2h,k}\omega_{h,k}}{\omega_{6h,k}^3}  e^{\frac{2\pi i }{k} \left(-nh + \frac{mh'}{6} \right)}.
\end{gather*}
Once again, we start with by evaluating the multipliers.
\begin{lemma}\label{multigcd1} 
    \textit{It holds that}
    \begin{align*}
    \frac{\omega_{6h,k}\omega_{2h,k}\omega_{h,k}}{\omega_{3h,k}}=e^{- \frac{2\pi i}{36k} \left(-9k+9k^2+h \left(-9+9k^2 \right) + h'\left(2-2k^2 \right)  \right)},
\end{align*}
    \begin{align*}
     \frac{\omega_{3h,k}\omega_{2h,k}\omega_{h,k}}{\omega_{6h,k}^3} =e^{\frac{2\pi i}{18k} \left(9h \left( k^2-1\right) + h'\left(k^2-1 \right)  \right)}.
\end{align*}
\end{lemma}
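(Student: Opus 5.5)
The plan is a direct computation with the second line of (2.3). Since $\gcd(k,6)=1$, $k$ is odd, so each multiplier $\omega_{ah,k}$ with $a\in\{1,2,3,6\}$ is given by
\begin{align*}
\omega_{ah,k}=\left(\frac{-ah}{k}\right)e^{-\pi i\left(\frac14(k-1)+\frac1{12}\left(k-\frac1k\right)\left(2ah-(ah)'+a^2h^2(ah)'\right)\right)},
\end{align*}
where $(ah)'$ denotes an inverse of $-ah$ modulo $k$. We assume $6\mid h'$ and $hh'\equiv-1 \pmod k$, so we may take $(ah)'=h'/a$. This is the reason for imposing the extra divisibility $6\mid h'$.

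\textbf{Step 1: the Kronecker symbols cancel.} For the first quotient the symbol part is
\begin{align*}
\left(\frac{-6h}{k}\right)\left(\frac{-2h}{k}\right)\left(\frac{-h}{k}\right)\Big/\left(\frac{-3h}{k}\right).
\end{align*}
By multiplicativity in the numerator this equals $\left(\frac{-h}{k}\right)^4\left(\frac{2}{k}\right)^2=1$, since all symbols are $\pm1$. The second quotient is handled the same way, with the cube of $\omega_{6h,k}$ in the denominator. The constant terms $\frac14(k-1)$ enter with total multiplicity $1+1+1-1=2$ in the first quotient, contributing $e^{-\pi i(k-1)/2}$, and with multiplicity $3-3=0$ in the second.

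\textbf{Step 2: collect the exponents.} In $\frac{1}{12}\left(k-\frac1k\right)\left(2ah-\frac{h'}{a}+ah^2h'\right)$, the $h$-terms give $2h\sum\pm a$ and the $h^2h'$-terms give $h^2h'\sum\pm a$. The $h'$-terms give $-h'\sum\pm\frac1a$. For the first quotient the signed sums are $6+2+1-3=6$ and $\frac16+\frac12+1-\frac13=\frac43$. For the second they are $3+2+1-18=-12$ and $\frac13+\frac12+1-\frac12=\frac43$. Each exponent is then the quantity $\frac{k^2-1}{12k}(\cdots)$, multiplied by $-\pi i$.

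Next use $hh'\equiv-1\pmod k$, and more precisely $hh'=-1+kt$. This replaces $h^2h'$ by $-h+kth$. The extra piece $\frac{(k^2-1)}{12k}\cdot ckth$ equals $\frac{c(k^2-1)th}{12}$, with $c=6$ or $-12$. Since $k$ is coprime to $6$, we have $24\mid k^2-1$. This piece is therefore an even integer multiple of $\frac{1}{2}$, i.e.\ it contributes a trivial factor once multiplied by $\pi i$. This is the same mod-$1$ reduction used in Lemmas \ref{Multsgcd6} and \ref{multigcd3}.

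\textbf{Step 3: match the stated forms.} Put everything over the denominator $36k$ (resp.\ $18k$), absorbing $e^{-\pi i(k-1)/2}$ as the $-9k+9k^2$ term. Then reduce the coefficients of $h$ and $h'$ modulo $36k$ (resp.\ $18k$) using $24\mid k^2-1$ and $k$ odd. The target expressions are $h(-9+9k^2)+h'(2-2k^2)$ and $9h(k^2-1)+h'(k^2-1)$.

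The main obstacle is bookkeeping, specifically in two places. The first is verifying that the discrepancies between my raw coefficients and the stated ones are integer multiples of the modulus. The second is the choice of $(ah)'$: when $\omega_{ah,k}$ is viewed via $ah\cdot(ah)'\equiv-1$, one must check that using $h'/a$ rather than some other representative is harmless. This holds because the formula is independent of the representative modulo $k$ only after the $24\mid k^2-1$ reductions. I would carry out the first quotient fully and note that the second is analogous.
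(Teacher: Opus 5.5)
Your proposal is correct and follows the paper's route (the paper says the proof is analogous to Lemma \ref{multigcd3}). You evaluate each factor with the odd-$k$ case of (2.3) using $(ah)'=h'/a$, which is legitimate since $6\mid h'$, let the Kronecker symbols and $\frac14(k-1)$ terms combine as you describe, and eliminate $h^2h'$ via $hh'=-1+kt$. After that substitution the coefficients already equal $-9k+9k^2+h(9k^2-9)+h'(2-2k^2)$ over $36k$ and $9(k^2-1)h+(k^2-1)h'$ over $18k$, so Step 3 needs no reduction. The discarded pieces $\frac{(k^2-1)th}{2}\in 4\mathbb{Z}$ and $-(k^2-1)th\in 8\mathbb{Z}$ are even integers (already because $8\mid k^2-1$), which is the precise reason they contribute trivially; being merely an ``even multiple of $\frac12$'' would not suffice.
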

\begin{proof} The proof is analogous to the one of Lemma \ref{multigcd3}.
\end{proof}
With this, we once again obtain the following bounds:
\begin{lemma}\label{1boundlemma}
    \textit{For $\varepsilon>0$ it holds that}
    \begin{align*}
       &K_k^{[111]}(n,m),K_k^{[121]}(v,n,m),K_k^{[131]}(n,m),\\ &\hspace{1cm}\mathbb{K}_{k,\ell}^{[112]}(n,m),\mathbb{K}_{k,\ell}^{[122]}(v,n,m),\mathbb{K}_{k,\ell}^{[132]}(n,m) <\!\!<_\varepsilon n^{\frac{1}{3}} k^{\frac{2}{3}+\varepsilon}.
    \end{align*}
\end{lemma}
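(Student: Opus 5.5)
Following the proofs of Lemma \ref{3boundlemma} and Lemma \ref{2boundlemma}, I will reduce every sum to a classical Kloosterman sum $H_k(\cdot,\cdot)$ with integral arguments and then apply Lemma \ref{lemmaBoundsKM}. The first step is to insert the multipliers from Lemma \ref{multigcd1} into the definitions. Since $k$ is odd, $(-1)^{\frac{k+1}{2}}$ is a sign. Also $hk$ is an integer, so $e^{\frac{3\pi i hk}{2}}$ is a fourth root of unity. If it depends on the parity of $h$, it can be absorbed as $e^{\frac{2\pi i}{k}\cdot\frac{3k^2 h}{4}}$, which shifts the coefficient of $h$ by a quarter-integer multiple of $k$. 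Modulo this point, one collects all terms into the shape
\begin{align*}
c\sum_{\substack{0\le h<k\\ \text{gcd}(h,k)=1\\ hh'\equiv -1\ (k)\\ 6|h'}} e^{\frac{2\pi i}{k}\left(-\alpha h+\beta \frac{h'}{6}\right)}
\end{align*}
with $|c|=1$. For $K_k^{[111]}$, for instance, one expects $\alpha = n+\frac{-9+9k^2+27}{36}+(\text{$k$-dependent correction})$ and $\beta=m+\frac{6(2-2k^2)}{36}$ up to the $\frac{3h}{2k}$-term. For $K_k^{[121]}$ the coefficient of $h'/6$ gets the extra $\frac{3(v-3v^2)}{2}$ contribution.

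The second step is to verify that $\alpha,\beta\in\mathbb{Z}$, using only $\gcd(k,6)=1$. The key facts are that $k^2\equiv 1 \pmod{24}$, so $\frac{k^2-1}{36}\cdot 6=\frac{k^2-1}{6}$ and $\frac{9(k^2-1)}{36}$ are integers, and that $v-3v^2$ is even. Where $h$ carries a half-integer coefficient, I would use that $h$ and $h+k$ both run over residues mod $k$, together with $k$ odd, to realign the summation so that only integral coefficients remain. This is the analogue of the parity manipulations in Lemma \ref{3boundlemma}. I expect this bookkeeping to be the main obstacle: checking that the exponent really is $k$-periodic in $h$ and $h'$, so that the sum is a genuine Kloosterman sum. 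If some coefficient turns out to be genuinely half-integral, I would instead pass to modulus $2k$ as in Lemma \ref{Modulshift}.

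The third step removes the condition $6|h'$. Since $\gcd(6,k)=1$, I substitute $h'\mapsto 6h'$ and $h\mapsto [6]_k h$, where $[6]_k$ is the inverse of $6$ mod $k$. This preserves $hh'\equiv-1 \pmod k$ and turns the sum into $c\,H_k(\alpha[6]_k,\beta)$, with the coefficient of $h$ changed only by a unit mod $k$. Lemma \ref{lemmaBoundsKM} then gives $O_\varepsilon(k^{\frac23+\varepsilon}\gcd(|\alpha[6]_k|,k)^{\frac13})$. Because $[6]_k$ is a unit mod $k$, $\gcd(\alpha[6]_k,k)=\gcd(\alpha,k)$. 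If $\alpha=n+\frac{r(k)}{36}$, then as in Lemma \ref{6boundlemma} we get $\gcd(\alpha,k)\le\gcd(36n+r(k),k)=\gcd(36n+r_0,k)\le 36n+r_0\ll n$, where $r_0$ is the constant part of $r(k)$. This yields the claimed $n^{\frac13}k^{\frac23+\varepsilon}$.

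The incomplete sums $\mathbb{K}_{k,\ell}^{[1y2]}$ are handled by the same substitutions, after which Estermann's trick from the proof of Corollary \ref{halfbounds} applies. The condition $N<k+k_1\le\ell$ depends on $h'$ modulo $k$ by (\ref{k1 mod}). I expand the indicator in additive characters of $h'$ with coefficients $b_r$ satisfying $\sum|b_r|<\log(4k)$. Each resulting complete sum is bounded by (\ref{2.19})-type estimates, and the logarithm is absorbed into $k^{\varepsilon}$. Since the rescaling $h'\mapsto 6h'$ only permutes the characters, this step poses no difficulty.
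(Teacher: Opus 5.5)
Your proposal is correct and follows the paper's proof. You insert Lemma~\ref{multigcd1} and check that the coefficients of $h$ and $h'/6$ are integers. They come out as $-\bigl(n+\frac{1-k^2}{2}\bigr)$ and $m+\frac{k^2-1}{3}$, plus $\frac{3(v-3v^2)}{2}$ for $K_k^{[121]}$. Your ``$k$-dependent correction'' $-\frac{3k^2}{4}$ coming from $e^{\frac{3\pi i hk}{2}}$ is exactly what makes the $h$-coefficient integral. You then remove $6\mid h'$ by $h'\mapsto 6h'$, $h\mapsto[6]_k h$, apply Lemma~\ref{lemmaBoundsKM}, and use Estermann's trick for the incomplete sums.

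So the half-integer contingency never arises, and Lemma~\ref{Modulshift} would not apply to odd $k$ anyway. The only slips are the flipped sign in your $\beta$, and that after expanding the indicator you need the complete-sum bound (2.18), not (2.19).
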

\begin{proof}
    Using Lemma \ref{multigcd1} we get
    \begin{gather*}
    K_k^{[111]}(n,m)=-\sum_{\substack{0 \leq h <k\\\text{gcd}(h,k)=1\\ hh'\equiv -1 \text{ }(k)\\ 6|h'}}    e^{\frac{2\pi i}{k} \left(-\frac{h\left(36n+18-18k^2\right)}{36}+ \frac{\frac{h' }{6}\left(6m-2+2k^2\right)}{6} \right)},
    \\ K_k^{[121]}(v,n,m)=-\sum_{\substack{0\leq h<k \\ \text{gcd}(h,k)=1\\ hh'\equiv -1 \text{ }(k)\\ 6|h'}}  e^{\frac{2\pi i}{k} \left(-\frac{h\left(36n+18-18k^2\right)}{36}+ \frac{\frac{h' }{6}\left(6m-2+2k^2-27v^2+9v\right)}{6} \right)},     
    \\ K_k^{[131]}(n,m)= \sum_{\substack{0 \leq h <k\\\text{gcd}(h,k)=1\\ hh'\equiv -1 \text{ }(k)\\ 6|h'}}e^{\frac{2\pi i }{k} \left(-\frac{h \left(2n+1-k^2\right)}{2} + \frac{ \frac{h'}{6}\left(3m-1+k^2 \right)}{3} \right)}.
\end{gather*}
Again, we show that the factors that are multiplied with $h$ and $\frac{h'}{6}$ are integral. 
For all factors of $h$ we find that they are integral as
\begin{align*}
    \frac{18-18k^2}{36}=\frac{1-k^2}{2},
\end{align*}
is integer, as $k$ is odd. for the factors of $\frac{h'}{6}$ we note that  $\frac{-1+k^2}{3},$ is integral as $3\nmid k$ and thus $k^2\equiv 1$ (mod 3). Also $\frac{-27v^2+9v}{6}=\frac{-9v^2+3v}{2}$ is an integer, as if $v$ is even, both terms in the numerator are even and both are odd, when $v$ is odd.  Once again, in order to get rid of the condition $6|h'$, we proceed as in the proof of Lemma \ref{2boundlemma}. To get the desired bounds, one applies Lemma \ref{lemmaBoundsKM}. The calculations for the $\mathbb{K}_{k,\ell}$ are once again analogous. 
\end{proof}
From the proof of Lemma  \ref{1boundlemma} we also get that
\begin{align*}\label{KMEQUAL}
    K_k^{[111]}(n,m)=-K_k^{[131]}(n,m)  \text{ and } \mathbb{K}_{k,\ell}^{[111]}(n,m)=-\mathbb{K}_{k,\ell}^{[131]}(n,m). \tag{3.1}
\end{align*}
We will use this result at a later point, when we show that two principle parts cancel. 

\section{the circle method}
In this section, we will follow Rademacher's approach and employ the circle method \cite{Rademacher1938TheFC}.  We will also closely follow the notation used by Bringmann and Mahlburg in \cite[Section 4.1]{Bringmann2011AnEO}. By Cauchy's integral formula we have 
\begin{align*}
    \text{pod}_2(n)= \frac{1}{2\pi i} \int_{C} \frac{\text{POD}_2(q)}{q^{n+1}} \text{d}q,
\end{align*}
where $C$ is any closed curve inside the unite circle, looping around $0$ with winding number 1. Following the argument of \cite{Bringmann2011AnEO} we find, by setting $C$ to be the circle with radius $r=e^{\frac{-2\pi}{N^2} +2\pi i t }$ and $0\leq t\leq 1$ that
\begin{align*}
    \text{pod}_2(n)=\int_0^1 \text{POD}_2(e^{\frac{2\pi}{N^2}+2\pi i t})\cdot e^{\frac{2\pi n}{N^2}-2\pi i t} \text{d}t.
\end{align*}
With our previous definitions from Section 2, we get 
\begin{align*}
    \text{pod}_2(n) &= \sum_{\substack{0\leq h < k \leq N\\ \text{gcd}(h,k)=1}} e^{-\frac{2\pi i nh}{k}} \int_{-\vartheta'_{h,k}}^{\vartheta_{h,k}''} e^{\frac{2\pi n z}{k}} \text{POD}_2\left(e^{\frac{2\pi i}{k} (h+iz)} \right) \text{d$\Phi$}
    \\ &= \sum_{\substack{0\leq h < k \leq N\\ \text{gcd}(h,k)=1}} e^{-\frac{2\pi i nh}{k}} \int_{-\vartheta'_{h,k}}^{\vartheta_{h,k}''} e^{\frac{2\pi n z}{k}} \left(  g_1\left(e^{\frac{2\pi i}{k} (h+iz)} \right) +g_2\left(e^{\frac{2\pi i}{k} (h+iz)} \right)\right)\text{d$\Phi$}, \tag{4.1}
\end{align*}
where
\begin{align*}
    g_1(q):= -\frac{1}{2} \zeta_1(q)\omega(q)=-\frac{1}{2}\sum_{m\in \mathbb{N}_0} a(n)q^n,
\end{align*} 
\begin{align*}
    g_2(q):=\frac{3}{2} \zeta_2(q)=\frac{3}{2} \sum_{n\in \mathbb{N}_0} b(n)  q^n ,
\end{align*} as well as $z= k\left(N^{-2}-i\Phi \right)$ as in Subsection 2.4.
We will once again differentiate between the various cases of gcd$(k,6)$ as we did in Subsection 2.2 and use  
\begin{align*}
    \text{pod}_2(n)= \sum_6+\sum_2+\sum_3 +\sum_1, \tag{4.2}
\end{align*}
where $\sum_d$ denotes the sum above running over $0\leq h <k\leq N$ with gcd$(h,k)=1$ and gcd$(k,6)=d.$ We can split the integral above into three parts
\begin{align}\label{intsplit}
    \int_{-\vartheta_{h,k}'}^{\vartheta_{h,k}''} =\int_{-\frac{1}{k(k+N)}}^{\frac{1}{k(k+N)}} + \int_{-\frac{1}{k(k+k_1)}}^{-\frac{1}{k(k+N)}} + \int_{\frac{1}{k(k+N)}}^{\frac{1}{k(k+k_2)}}. \tag{4.3}
\end{align} We can split the second integral on the right-hand side of (\ref{intsplit}) even further 
\begin{align*}
    \int_{-\frac{1}{k(k+k_1)}}^{-\frac{1}{k(k+N)}} =\sum_{\ell=k+k_1}^{k+N-1} \int_{-\frac{1}{k\ell}}^{-\frac{1}{k(\ell+1)}}. \tag{4.4}
\end{align*} Using this decomposition, we get
\begin{align*}\label{KMtyp2}
    \sum_{\substack{0\leq h <k\leq N \\ \text{gcd}(h,k)=1 \\
    \text{gcd}(k,6)=d}} \int_{-\frac{1}{k(k+k_1)}}^{-\frac{1}{k(k+N)}}  = \sum_{\substack{0\leq h <k\leq N \\ \text{gcd}(h,k)=1 \\
    \text{gcd}(k,6)=d}}\sum_{\ell=k+k_1}^{k+N-1} \int_{-\frac{1}{k\ell}}^{-\frac{1}{k(\ell+1)}} = \sum_{\substack{1\leq k\leq N\\ \text{gcd}(k,6)=d}} \sum_{\ell=N+1}^{N+k-1} \sum_{\substack{0\leq h<k\\ \text{gcd}(h,k)=1\\ N<k+k_1 \leq \ell}} \int_{-\frac{1}{k\ell}}^{-\frac{1}{k(\ell+1)}}. \tag{4.5}
\end{align*}
Note that the condition $0 \leq h<k$ with gcd$(h,k)=1$ and $N<k+k_1\leq \ell$ is exactly the condition in the Kloosterman sums denoted by $\mathbb{K}$ in Section 3. 
A similar decomposition exists for the third integral on the right-hand side of (\ref{intsplit})
\begin{align*}
    \int_{\frac{1}{k(k+N)}}^{\frac{1}{k(k+k_2)}} = \sum_{\ell=k+k_2}^{N+k-1} \int_{\frac{1}{k(\ell+1)}}^\frac{1}{k\ell}. \tag{4.6}
\end{align*} This decomposition results in
\begin{align*}\label{KMtyp3}
    \sum_{\substack{0\leq h <k\leq N \\ \text{gcd}(h,k)=1 \\
    \text{gcd}(k,6)=d}} \int_{\frac{1}{k(k+N)}}^{\frac{1}{k(k+k_2)}}  = \sum_{\substack{1\leq k\leq N\\ \text{gcd}(k,6)=d}} \sum_{\ell=N+1}^{N+k-1} \sum_{\substack{0\leq h<k\\ \text{gcd}(h,k)=1\\ N<k+k_2 \leq \ell}} \int_{\frac{1}{k(\ell+1)}}^{\frac{1}{k\ell}}. \tag{4.7}
\end{align*}

Note that these third kind of Kloosterman sums, ranging over $0 \leq h<k$ with both gcd$(h,k)=1$ and $N<k+k_2\leq \ell$ are not mentioned above in Section 3. We choose to suppress them there, because the bounds that hold for the Kloosterman sums with $N< k+k_1\leq \ell$ also hold for the ones with $N < k+k_2\leq \ell$. For the regular Kloosterman sums \cite[footnote 8]{Rademacher1938TheFC} as well as the sums defined in Definition \ref{MyKMSums}, we can used a  modified form of the trick Estermann we already utilized in the proof of Corollary \ref{halfbounds}, to see that these bounds hold.

In order to determine the contribution of these integrals to the final result (\ref{finalform}), only a few methods are necessary. Virtually all of these methods have already been demonstrated in \cite[Section 4]{bridges2023rademachertype}. We will refer to them as we need them. 
\subsection{The case gcd$(k,6)=6$}
We have, by using (\ref{zetatrans6}) and (\ref{zeta2trans6}) as well as (\ref{omegakeven}) that
\begin{align*}
    \sum_6= S_{61}+S_{62}+S_{63}, \tag{4.8}
\end{align*}
where
\begin{align*}
    S_{61}:= -\frac{i}{2}\sum_{\substack{0 \leq h < k\leq N\\ \text{gcd}(h,k)=1 \\\text{gcd}(k,6)=6\\hh'\equiv -1\text{ } (36k)}} (-1)^{\frac{1}{2}\left(h'+1\right)}  \frac{\omega_{h,\frac{k}{6} }\omega_{h,\frac{k}{2}} \omega_{h,k}}{\omega_{h, \frac{k}{3}}} e^{-\frac{3\pi i h'k'}{2}+ \frac{3\pi i h'}{2k} -\frac{3\pi i h }{2k} - \frac{2\pi inh}{k}}     
    \\ \times \int_{-\vartheta_{h,k}'}^{\vartheta_{h,k}''} e^{\frac{(2 n+1) \pi z }{k}-\frac{\pi}{zk}} g_1 \left(q_1 \right)\text{d$\Phi$},
    \end{align*}
    \begin{align*}
    S_{62}:=\sum_{\substack{0 \leq h < k\leq N\\ \text{gcd}(h,k)=1 \\\text{gcd}(k,6)=6\\hh'\equiv -1\text{ } (36k)}} \frac{(-1)^{\frac{1}{2}\left(h'+1 \right)}}{k}  \frac{\omega_{h,\frac{k}{6}}\omega_{h,\frac{k}{2}}\omega_{h,k}}{\omega_{h,\frac{k}{3}}} 
e^{-\frac{3\pi i h'k'}{2} -\frac{3\pi i h}{2k} - \frac{2\pi i  nh}{k}}
    \sum_{v \text{ } \left(\text{mod } \frac{k}{2}\right)} (-1)^v e^{-\frac{6\pi i h'\mu^2}{k} + \frac{2\pi i h' \mu}{k}} 
    \\ \times \int_{-\vartheta_{h,k}'}^{\vartheta_{h,k}''}z e^{\frac{(2 n+1) \pi z}{k}+\frac{\pi}{3zk} }\zeta_1\left(q_1\right) I_{k,v}(z)\text{d$\Phi$},
\end{align*}
\begin{align*}
    S_{63}:= \frac{3}{2}\sum_{\substack{0 \leq h < k\leq N\\ \text{gcd}(h,k)=1 \\\text{gcd}(k,6)=6 \\hh'\equiv -1\text{ } (36k)}}  \frac{\omega_{h,\frac{k}{3}}\omega_{h,\frac{k}{2}}\omega_{h,k}}{\omega_{h,\frac{k}{6}}^3} e^{-\frac{2\pi i nh}{k}} \int_{-\vartheta'_{h,k}}^{\vartheta''_{h,k}}  
     e^{-\frac{\pi}{zk} + \frac{(2n+1)z}{k} } g_2 \left(q_1\right)
    \text{d} \Phi.
\end{align*}
\subsubsection{$S_{61}$} We utilize the splitting (\ref{intsplit}). Following the notation in \cite{bridges2023rademachertype} we use the notation $S^{[1]}_{61}, S^{[2]}_{61}$ and $S^{[3]}_{61}$ for the corresponding terms.  We have
\begin{align*}
    S_{61}^{[1]}= -\frac{i}{2}\sum_{\substack{1\leq k\leq N\\6|k}}  \sum_{m \geq 0} a(m) K_k^{[611]}(n,m) \int_{-\frac{1}{k(k+N)}}^{\frac{1}{k(k+N)}} e^{\frac{(2n+1)\pi z}{k} - \frac{(2m+1)\pi }{kz}} \text{d}\Phi.
\end{align*}
Using Lemma \ref{6boundlemma} as well as Re$(z)=\frac{k}{N^2}$ and Re$\left(\frac{1}{z} \right) \geq \frac{k}{2}$, we find that $S_{61}^{[1]} \to 0,$ as $N \to \infty.$  Next, for $S_{61}^{[2]}$ we find that
\begin{align*}
    S_{61}^{[2]}= i \sum_{\substack{1\leq k \leq N\\6|k}} \sum_{m \geq 0} a(m) \sum_{\ell=N+1}^{N+k-1}\mathbb{K}_{k,\ell}^{[612]}(n,m) \int_{-\frac{1}{k\ell}}^{\frac{1}{k(\ell+1)}} e^{\frac{(2n+1)\pi z}{k} - \frac{(2m+1)\pi }{kz}} \text{d}\Phi.
\end{align*}
Again using the bound from Lemma \ref{6boundlemma} as well as the bounds for the real part of both $z$ and $\frac{1}{z},$ we get that $S_{61}^{[2]} \to 0$ as $N \to \infty. $ The calculations for $S_{61}^{[3]}$ are analogous to the ones for $S_{61}^{[2]}$, thus $S_{61}^{[3]} \to 0$ also, as $N \to \infty.$ Thus the contribution of $S_{61}$ to  (\ref{finalform}) is $0.$ 
\subsubsection{$S_{62}$} This integral has both a principal and a non-principal part. The non-principal part vanishes, as $N\to \infty.$ The calculations for the non-principal part are analogous to the ones for integral $\mathcal{S}_{22}$ in \cite{bridges2023rademachertype}. For the principle part, we are left with
\begin{align*}
    \mathbb{S}_{62}:=\sum_{\substack{0 \leq h < k\leq N\\ \text{gcd}(h,k)=1 \\\text{gcd}(k,6)=6\\hh'\equiv -1\text{ } (36k)}} \frac{(-1)^{\frac{1}{2}\left(h'+1 \right)}}{k}  \frac{\omega_{h,\frac{k}{6}}\omega_{h,\frac{k}{2}}\omega_{h,k}}{\omega_{h,\frac{k}{3}}} 
e^{-\frac{3\pi i h'k'}{2} -\frac{3\pi i h}{2k} - \frac{2\pi i  nh}{k}}
    \sum_{v \text{ } \left(\text{mod } \frac{k}{2}\right)} (-1)^v e^{-\frac{6\pi i h'\mu^2}{k} + \frac{2\pi i h' \mu}{k}} 
    \\ \times \int_{-\vartheta_{h,k}'}^{\vartheta_{h,k}''} e^{\frac{(2 n+1) \pi z}{k}} \mathcal{I}_{\frac{1}{3},k,v}(z)\text{d$\Phi$}.
\end{align*}
We once again split  the integral $\mathbb{S}_{62}$ into the  three parts $\mathbb{S}_{62}^{[1]}, \mathbb{S}_{62}^{[2]}$ and $\mathbb{S}_{62}^{[3]}$ using \ref{intsplit}.
Following the methodology in \cite{bridges2023rademachertype} for their integral $\mathcal{S}_{22}^{[1]}$, we get that for $N\to \infty,$ 
\begin{align*}
    \mathbb{S}_{62}^{[1]}= \frac{\pi}{3\sqrt{3\left(n+\frac{1}{2} \right)} } \sum_{\substack{k=1\\\text{gcd}(k,6)=6}}^\infty \frac{1}{k^2} \sum_{v=0}^{\frac{k}{2}-1} (-1)^v  K_k^{[62]}(v,n) \int_{-1}^1 \frac{\sqrt{1-x^2} I_1\left( \frac{4\pi \sqrt{\left(n+\frac{1}{2}\right)\left(1-x^2\right)}}{\sqrt{6}k} \right)}{\tanh\left(\frac{\pi i (6v+2)}{3k} -\frac{2\pi x}{3\sqrt{2} k} \right) } \text{d}x.
\end{align*}
As in \cite{bridges2023rademachertype}, the integrals $\mathbb{S}_{62}^{[2]}$ and $\mathbb{S}_{62}^{[3]}$ do not contribute. 
Therefore,  the whole contribution of the integral $S_{62}$ as $N \to \infty$ is given by $\mathbb{S}_{62}^{[1]}$ which exactly the first term in the formula stated in  Theorem \ref{mainresult}. 
\subsubsection{$S_{63}$} For the integral $S_{63}$ there is no principle part. The contribution vanishes as it did for $S_{61}$ for $N\to \infty.$
\subsection{The case gcd$(k,6)=2$}
If gcd$(k,6)=2$ we find, using (\ref{zetatrans2}) and  (\ref{zeta2trans2}) as well as (\ref{omegakeven}), 
\begin{align*}
    \sum_2&= S_{21}+S_{22}+S_{23}, \tag{4.9}
    \end{align*}
where
\begin{align*}
    S_{21}&:=- \frac{i}{2} \sum_{\substack{0 \leq h < k\leq N\\ \text{gcd}(h,k)=1 \\ \text{gcd}(k,6)=2 \\hh'\equiv -1\text{ } (4k) \\3 |h'}} (-1)^{\frac{1}{2}\left( h'+1 \right)}  \frac{\omega_{3h,\frac{k}{2}}\omega_{h,\frac{k}{2}}\omega_{h,k}}{\omega_{3h,k}} e^{-\frac{3\pi i h'k'}{2}+\frac{3\pi i h'}{2k} - \frac{3\pi i h}{2k} -\frac{2\pi inh}{k}} 
    \\[-1cm]&  \hspace{6cm}    \times \int_{-\vartheta'_{h,k}}^{\vartheta''_{h,k}} e^{-\frac{11\pi}{9zk} +\frac{(2n+1)\pi  z}{k}} \omega\left(q_1\right)\frac{P\left(q_1 \right)P\left(q_1^\frac{2}{3} \right)}{P\left(q_1^\frac{1}{3} \right)} \text{d}\Phi,
    \end{align*}
    \begin{align*}
    S_{22}&:=\sum_{\substack{0 \leq h < k\leq N\\ \text{gcd}(h,k)=1 \\ \text{gcd}(k,6)=2 \\hh'\equiv -1\text{ } (4k) \\ 3 |h'}} \frac{(-1)^{\frac{1}{2}\left(h'+1 \right)}}{k}   \frac{\omega_{3h,\frac{k}{2}}\omega_{h,\frac{k}{2}}\omega_{h,k}}{\omega_{3h,k}} e^{-\frac{3\pi i h'k'}{2} - \frac{3 \pi i h}{2k}-\frac{2\pi inh}{k}} \sum_{v \text{ } \left(\text{mod } \frac{k}{2}\right)} (-1)^v e^{-\frac{6\pi i h'\mu^2}{k} + \frac{2\pi i h' \mu}{k}}
    \\[-1cm]&\hspace{6cm}\times\int_{-\vartheta'_{h,k}}^{\vartheta''_{h,k}} z e^{\frac{\pi }{9zk}+\frac{(2n+1)\pi z}{k}} \frac{P\left(q_1 \right)P\left(q_1^\frac{2}{3} \right)}{P\left(q_1^\frac{1}{3} \right)} I_{k,v}(z) \text{d}\Phi,
\end{align*}
\begin{align*}
    S_{23} := \frac{1}{2}\sum_{\substack{0 \leq h < k\leq N\\ \text{gcd}(h,k)=1 \\ \text{gcd}(k,6)=2\\hh'\equiv -1\text{ } (4k)  \\3|h'}} \frac{\omega_{3h,k}\omega_{h,\frac{k}{2}}\omega_{h,k}}{\omega_{3h,\frac{k}{2}}^3} e^{-\frac{2\pi i nh}{k}} \int_{-\vartheta'_{h,k}}^{\vartheta''_{h,k}}  
      e^{\frac{\pi}{9kz}+\frac{(2n+1)\pi z}{k}} \frac{P\left(q_1^\frac{1}{3}\right)P\left(q_1^2\right)P\left(q_1\right)}{P\left(q_1^\frac{2}{3}\right)^3}\text{d} \Phi.
\end{align*}
\subsubsection{$S_{21}$} As for $S_{11},$ this integral has no contribution as $N \to \infty.$ 
\subsubsection{$S_{22}$} This integral has a principle part. The calculations are analogous to the ones presented for $\mathcal{S}_{22}$ in \cite{bridges2023rademachertype}. The resulting contribution of $S_{22}$ as $N\to \infty$ is exactly the second term in the formula (\ref{finalform}).
\subsubsection{$S_{23}$} The non-principle part of $S_{23}$ vanishes as $N \to \infty.$ This leaves us with
\begin{align*}
   \mathbb{S}_{23}&= \frac{1}{2}\sum_{\substack{0 \leq h < k\leq N\\ \text{gcd}(h,k)=1 \\ \text{gcd}(k,6)=2 \\hh'\equiv -1\text{ } (4k) }} \frac{\omega_{3h,k}\omega_{h,\frac{k}{2}}\omega_{h,k}}{\omega_{3h,\frac{k}{2}}^3} e^{-\frac{2\pi i nh}{k}} \int_{-\vartheta'_{h,k}}^{\vartheta''_{h,k}}  
      e^{\frac{\pi}{9kz}+\frac{(2n+1)\pi z}{k}} \text{d}\Phi
      \\ &=\frac{1}{2}\sum_{\substack{0 \leq h < k\leq N\\ \text{gcd}(h,k)=1 \\ \text{gcd}(k,6)=2\\hh'\equiv -1\text{ } (4k)  }} \frac{\omega_{3h,k}\omega_{h,\frac{k}{2}}\omega_{h,k}}{\omega_{3h,\frac{k}{2}}^3} e^{-\frac{2\pi i nh}{k}} \left(\frac{2\pi}{3\sqrt{2n+1} k} I_1 \left( \frac{2\pi \sqrt{2n+1}}{3k} \right)+ O\left(\frac{1}{kN}\right) \right),
\end{align*}
as $N \to \infty$, by Proposition \ref{EZInt}. Using Lemma \ref{2boundlemma} we get that
\begin{align*}
   \left| \frac{1}{N}\sum_{\substack{  k=1\\  \text{gcd}(k,6)=2 }}^N \frac{1}{k} \sum_{\substack{h=0\\\text{gcd}(h,k)=1\\hh'\equiv -1\text{ } (4k) }}^{k-1}\frac{\omega_{3h,k}\omega_{h,\frac{k}{2}}\omega_{h,k}}{\omega_{3h,\frac{k}{2}}^3} e^{-\frac{2\pi i nh}{k}}  \right| <\!\!< \frac{1}{N} \sum_{k=1}^N  \frac{|K_k^{[231]}(n)|}{k} 
   <\!\!<  \frac{n^{\frac{1}{3}}}{N} \sum_{k=1}^N k^{-\frac{1}{3} +\varepsilon}  \to 0,
\end{align*}
as $N \to \infty.$ So principle part is given by
\begin{align*}
    \frac{\pi}{3\sqrt{2n+1} }\sum_{\substack{0 \leq h < k\leq N\\ \text{gcd}(h,k)=1 \\ \text{gcd}(k,6)=2\\hh'\equiv -1\text{ } (4k)  }} \frac{1}{k}\frac{\omega_{3h,k}\omega_{h,\frac{k}{2}}\omega_{h,k}}{\omega_{3h,\frac{k}{2}}^3} e^{-\frac{2\pi i nh}{k}}  I_1 \left( \frac{2\pi \sqrt{2n+1}}{3k} \right),
\end{align*} which corresponds to the third term in the final result. 
\subsection{The case gcd$(k,6)=3$}
Next, we study the integrals in the case that gcd$(k,6)=3$. Using (\ref{zetatrans3}) and (\ref{zeta2trans3}) as well as  (\ref{omegakodd}), we find that
\begin{align*}
    \sum_3 = S_{31} +S_{32}+S_{33}, \tag{4.10}
\end{align*}
where
\begin{align*}
    S_{13}&:=\frac{1}{4} \sum_{\substack{0 \leq h < k\leq N\\ \text{gcd}(h,k)=1 \\ \text{gcd}(k,6)=3 \\hh'\equiv -1\text{ } (k) \\ 2 |h'}}  (-1)^{\frac{1}{2}(k+1)} \frac{\omega_{2h,\frac{k}{3}}\omega_{2h,k}\omega_{h,k}}{\omega_{h,\frac{k}{3}}}  e^{\frac{3\pi i hk}{2} -\frac{3\pi i h}{2k}-\frac{2\pi i n h}{k}}
    \\[-1cm] &\hspace{4cm}\times \int_{-\vartheta'_{h,k}}^{\vartheta''_{h,k}} e^{\frac{(2n+1)\pi z} {k}} f\left( q_1^{\frac{1}{2}}\right) \frac{P\left(q_1 \right)P\left(q_1^\frac{3}{2} \right)}{P\left(q_1^3 \right)} \text{d}\Phi,
    \end{align*}
    \begin{align*}
    S_{32}&:=i \sum_{\substack{0 \leq h < k\leq N\\ \text{gcd}(h,k)=1 \\ \text{gcd}(k,6)=3 \\hh'\equiv -1\text{ } (k)\\ 2 |h'}} \frac{(-1)^{\frac{1}{2}(k+1)}}{k} \frac{\omega_{2h,\frac{k}{3}}\omega_{2h,k}\omega_{h,k}}{\omega_{h,\frac{k}{3}}} e^{\frac{3\pi i kh}{2} - \frac{3\pi i h}{2k}-\frac{2\pi i nh}{k}}
    \sum_{v \text{ (mod } k)} e^{-\frac{3\pi i h'v^2}{2k} +\frac{\pi i h' v}{2k}}
    \\[-1cm] &\hspace{4cm}\times\int_{-\vartheta'_{h,k}}^{\vartheta''_{h,k}} z e^{-\frac{\pi}{24zk}+\frac{(2n+1)\pi z}{k}} \frac{P\left(q_1 \right)P\left(q_1^\frac{3}{2} \right)}{P\left(q_1^3 \right)} J_{k,v}(z)\text{d}\Phi,
\end{align*}
\begin{align*}
    S_{33} := \frac{3}{4}\sum_{\substack{0 \leq h < k\leq N\\ \text{gcd}(h,k)=1 \\ \text{gcd}(k,6)=3\\hh'\equiv -1\text{ } (k)\\2|h' }}  \frac{\omega_{h,\frac{k}{3}}\omega_{2h,k}\omega_{h,k}}{\omega_{2h,\frac{k}{3}}^3} e^{-\frac{2\pi i nh}{k}} \int_{-\vartheta'_{h,k}}^{\vartheta''_{h,k}}  
      e^{\frac{(2n+1)\pi z}{k}} \frac{P\left(q_1^3\right)P\left(q_1^\frac{1}{2}\right)P\left(q_1\right)}{P\left(q_1^\frac{3}{2}\right)^3}
    \text{d} \Phi.
\end{align*}
\subsubsection{$S_{31}$} Here, we have no principle part. Analogous to the integral $S_{61}$ the contribution vanished as $ N \to \infty.$ 
\subsubsection{$S_{32}$} 
Here, we also have no principal part. The calculations needed to see this, are analogous to the ones for integral $\mathcal{S}_{62}$ in \cite{bridges2023rademachertype}.
\subsubsection{$S_{33}$} Here, once again we have no principle part. As for $S_{61}$ the contribution vanishes as $ N \to \infty.$
\subsection{The case gcd$(k,6)=1$}
Lastly, we find by utilizing (\ref{zetatrans1}) and (\ref{zeta2trans1}) as well as (\ref{omegakodd}) 
\begin{align*}
    \sum_1&= S_{11}+S_{12}+S_{13}, \tag{4.11}
\end{align*}
where
\begin{align*}
    S_{11}&:=\frac{1}{4}   \sum_{\substack{0 \leq h < k\leq N\\ \text{gcd}(h,k)=1 \\ \text{gcd}(k,6)=1 \\hh'\equiv -1\text{ } (k)\\ 6 |h'}} (-1)^{\frac{1}{2}(k+1)} \frac{\omega_{6h,k}\omega_{2h,k}\omega_{h,k}}{\omega_{3h,k}} e^{\frac{3\pi i h k}{2} -\frac{3\pi i h}{2k}-\frac{2\pi i nh}{k}} 
    \\[-1cm]&\hspace{4cm} \times \int_{-\vartheta'_{h,k}}^{\vartheta''_{h,k}} e^{\frac{\pi }{9zk}+\frac{(2n+1)\pi z}{k}} f \left(q_1^{\frac{1}{2}} \right) \frac{P\left(q_1 \right) P\left(q_1^\frac{1}{6} \right) }{P\left(q_1^\frac{1}{3} \right) }\text{d}\Phi,
    \end{align*}
    \begin{align*}
    S_{12}&:=i  \sum_{\substack{0 \leq h < k\leq N\\ \text{gcd}(h,k)=1 \\ \text{gcd}(k,6)=1\\hh'\equiv -1\text{ } (k) \\ 6 |h'}} \frac{(-1)^{\frac{1}{2} (k+1)} }{k}\frac{\omega_{6h,k}\omega_{2h,k}\omega_{h,k}}{\omega_{3h,k}}  e^{\frac{3\pi i h k}{2}-\frac{3\pi i h}{2k}-\frac{2\pi inh}{k}} \sum_{v \text{ (mod } k)} e^{-\frac{3\pi i h'v^2}{2k}+\frac{\pi i h'v}{2k}} \\[-1cm] &\hspace{4cm}\times \int_{-\vartheta'_{h,k}}^{\vartheta''_{h,k}} z e^{\frac{5\pi}{72zk}+\frac{(2n+1)\pi z}{k}} \frac{P\left(q_1 \right) P\left(q_1^\frac{1}{6} \right) }{P\left(q_1^\frac{1}{3} \right)} J_{k,v}(z) \text{d}\Phi,
\end{align*}
\begin{align*}
    S_{13} := \frac{1}{4} \sum_{\substack{0 \leq h < k\leq N\\ \text{gcd}(h,k)=1 \\ \text{gcd}(k,6)=1\\hh'\equiv -1\text{ } (k)\\6|h' }} \frac{\omega_{3h,k}\omega_{2h,k}\omega_{h,k}}{\omega_{6h,k}^3}  e^{-\frac{2\pi i nh}{k}}\int_{-\vartheta'_{h,k}}^{\vartheta''_{h,k}}   e^{\frac{ \pi}{9kz} + \frac{(2n+1) \pi z}{k}}  \frac{P\left(q_1^\frac{1}{3}\right)P\left(q_1^\frac{1}{2}\right)P\left(q_1\right)}{P\left(q_1^\frac{1}{6}\right)^3}  \text{d} \Phi.
\end{align*}
\subsubsection{$S_{11}$}
As it was the case for $S_{23},$ the integral $S_{11}$ has a principle part. It is given by
\begin{align*}
    \mathbb{S}_{11}&= \frac{\pi }{6\sqrt{2n+1}}   \sum_{\substack{0 \leq h < k\leq N\\ \text{gcd}(h,k)=1 \\ \text{gcd}(k,6)=1 \\hh'\equiv -1\text{ } (k)\\ 6 |h'}} \frac{(-1)^{\frac{1}{2}(k+1)}}{k} \frac{\omega_{6h,k}\omega_{2h,k}\omega_{h,k}}{\omega_{3h,k}} e^{\frac{3\pi i h k}{2} -\frac{3\pi i h}{2k}-\frac{2\pi i nh}{k}}I_1 \left( \frac{2\pi \sqrt{2n+1  }}{3k} \right),
\end{align*}
as $N \to \infty$. Note that this term does not come up in the final formula in Theorem \ref{mainresult}. That is because, as we will see in Subsection 4.4.3, the principle part of $S_{63}$ is exactly the same as the one of $-S_{61}.$ Thus they cancel each other and none of them are part of the  final formula.  
\subsubsection{$S_{12}$} This integral also has a principle part. The calculations needed to determine its contribution as $N\to \infty$ are analogous to the ones for $S_{62}$. Note that here we need the results for the integral $\mathcal{J}$ instead of the ones for $\mathcal{I}$ from Lemma \ref{boundIntJ}. The contribution is exactly the fourth (and last) term in the final result. 
\subsubsection{$S_{13}$} 
Once again, as with $S_{23},$ the integral $S_{13}$ also has a principle and a non-principle part.  The principle part is given by
\begin{align*}
    \mathbb{S}_{13} &= \frac{1}{4} \sum_{\substack{0 \leq h < k\leq N\\ \text{gcd}(h,k)=1 \\ \text{gcd}(k,6)=1 \\hh'\equiv -1\text{ } (k)\\6|h'}} \frac{\omega_{3h,k}\omega_{2h,k}\omega_{h,k}}{\omega_{6h,k}^3}  e^{-\frac{2\pi i nh}{k}}\int_{-\vartheta'_{h,k}}^{\vartheta''_{h,k}}   e^{\frac{ \pi}{9kz} + \frac{(2n+1) \pi z}{k}}  \text{d} \Phi
    \\ &= \frac{\pi }{6\sqrt{2n+1}}\sum_{\substack{0 \leq h < k\leq N\\ \text{gcd}(h,k)=1 \\ \text{gcd}(k,6)=1 \\hh'\equiv -1\text{ } (k)\\6|h'}} \frac{1}{k}\frac{\omega_{3h,k}\omega_{2h,k}\omega_{h,k}}{\omega_{6h,k}^3}  e^{-\frac{2\pi i nh}{k}} I_1 \left( \frac{2\pi \sqrt{2n+1 }}{3k} \right),
\end{align*}
as $N \to \infty.$ As already alluded to in Subsection 4.4.1, the contribution of $S_{63}$ to the final formula as $N \to \infty$ is exactly the negative of the contribution of $S_{61}.$ To see this, note that these principle parts are identically, up to the Kloosterman sum, with which they are multiplied. However, from our previous examination of these sums (\ref{KMEQUAL}), we know that the sums $K_k^{[111]}(n)$ and $K_k^{[131]}(n)$ only differ in their sign, i.e., their sum is $0$. So the principle parts of $S_{61}$ and $S_{63}$ cancel, once we add them up. Therefore, none of them show up in the final result.  This concludes the proof of Theorem \ref{mainresult}.  \begin{flushright} \vspace{-0.5cm}$\square$
\end{flushright}
As this proof is rather technical, we want to briefly recap the takeaways here. The main formula in Theorem \ref{mainresult} consists of four terms. The fourth term is exactly the contribution of integral $S_{12}.$ The second and third term arise in the calculations when we considered $S_{22}$ and $S_{23}.$ In the case that gcd$(k,6)=3$ we found no contributions. The remaining term is the contribution from $S_{62}.$ The cancellation of the terms that we described at the very end of the proof resulted in the contributions of $S_{61}$ and $S_{63}$ not showing up in the final formula. The same cancellation effect was also observed in  \cite{bridges2023rademachertype} and \cite{mauth2025exactformula1lowerrun}  where other weight 0 mixed mock modular forms where considered. 
\section*{Data Availability} No new data was created or analyzed in this research. Data sharing is not applicable to this article.

\section*{Compliance with Ethical Standards}
The author declares that he has no conflict of interest.

\end{document}